\NeedsTeXFormat{LaTeX2e}
\documentclass[12pt,leqno]{amsart}
\usepackage{a4wide}
\usepackage{amssymb}
\usepackage{amsmath}
\usepackage{amsthm}
\usepackage{verbatim}
\usepackage{txfonts}
\usepackage{fullpage}
\usepackage[all,cmtip]{xy} 
\usepackage{yfonts}

\usepackage[T1]{fontenc}
\usepackage[utf8]{inputenc}
\usepackage{mathtools}   
\usepackage{amsfonts}

\addtolength{\topmargin}{-1mm} \addtolength{\textheight}{2mm}

\numberwithin{equation}{section}

\theoremstyle{plain}

\newcommand{\A}{\ensuremath{{\mathbb{A}}}}
\newcommand{\C}{\ensuremath{{\mathbb{C}}}}
\newcommand{\Z}{\ensuremath{{\mathbb{Z}}}}

\newcommand{\F}{\ensuremath{{\mathbb{F}}}}

\newcommand{\I}{\ensuremath{{\mathbb{I}}}}
\newcommand{\D}{\ensuremath{{\mathbb{D}}}}
\newcommand{\charf}{\textbf{1}}

\newcommand{\GL}{\ensuremath{{\text{GL}}}}
\newtheorem{theo}{Theorem}[section]
\newtheorem{lem}[theo]{Lemma}
\newtheorem{prop}[theo]{Proposition}
\newtheorem{cor}[theo]{Corollary}

\theoremstyle{remark}
\newtheorem{rem}[theo]{Remark}

\theoremstyle{definition}
\newtheorem{defn}[theo]{Definition}

\newtheorem*{cor*}{Corollary}
\newtheorem*{claim1}{Claim 1}
\newtheorem*{claim2}{Claim 2}


\newcommand{\zxz}[4]{\begin{pmatrix} #1 & #2 \\ #3 & #4 \end{pmatrix}}

\newcommand{\Hom}{\operatorname{Hom}}

\title{The subconvexity bound for triple product L-function in level aspect}

\begin{document}
\author{Yueke Hu}

\address{Department of Mathematics, University of Wisconsin Madison, Van Vleck Hall, Madison, WI 53706, USA}
\email{yhu@math.wisc.edu}

\begin{abstract}
In this paper we generalized Venkatesh and Woodbury's work on the subconvexity bound of triple product L-function in level aspect, allowing joint ramifications, higher ramifications, general unitary central characters and general special values of local epsilon factors. In particular we derived a nice general formula for the local integrals whenever one of the representations has sufficiently higher level than the other two.
\end{abstract}
\maketitle

\section{introduction}
Let $\F$ be a number field. Let $\pi_i$, $i=1,2,3$ be three irreducible unitary cuspidal automorphic representations, such that the product of their central characters is trivial:
\begin{equation}
 \prod_{i}w_{\pi_i}=1.
\end{equation}
Let $\Pi=\pi_1\otimes\pi_2\otimes\pi_3$. Then one can define the triple product L-function $L(\Pi, s)$ associated to them.
It was first studied in \cite{Garrett} by Garrett in classical languages, where explicit integral representation was given.
In particular the triple product L-function has analytic continuation and functional equation.
Later on Shapiro and Rallis in \cite{ps} reformulated his work in adelic languages. 

We will consider in this paper the behavior of the special value of triple product L-function $L(\Pi,1/2)$. In particular, we will fix $\pi_1$ and $\pi_2$, let $\pi_3$ vary with finite conductor $\mathcal{N}$. We'd like to study the asymptotic behavior(actually the subconvexity bound) of $L(\Pi,1/2)$ 
as $Nm(\mathcal{N})\rightarrow \infty$.

The idea comes from Venkatesh's work in \cite{AV10}. One starts with the integral representation of the special value of triple product L-function (see, for example, \cite{Ichino}):

\begin{equation}\label{formula1}
 |\int\limits_{Z_{\A}\D^*(\F)\backslash \D^*(\A)} f_1(g)f_2(g)f_3(g) dg|^2=\frac{\zeta_\F^2(2)L(\Pi,1/2)}{8L(\Pi,Ad,1)}\prod_vI_v,
\end{equation}
where $f_i\in \pi_i^{\D}$ for a specific quaternion algebra $\D$, and the local integral $I_v$ can be formulated as follows:
\begin{equation}
I_v=\frac{L_v(\Pi_v,Ad,1)}{\zeta_v^2(2)L_v(\Pi_v,1/2)}\int\limits_{\F_v^*\backslash \D^*(\F_v)}\prod\limits_{i=1}^{3}<\pi^{\D}_i(g)f_{i,v},f_{i,v}>dg.
\end{equation}
Here $<\cdot,\cdot>$ is a bilinear and $\D^*(\F_v )-$ invariant unitary pairing for $\pi^{\D}_{i,v}$.
At unramified places, this local integral is 1.

Suppose now the cusp forms and their local components are properly normalized. The idea in \cite{AV10} is to give first an upper bound for the left-hand side of (\ref{formula1}). 
Then a lower bound for the local integrals $I_v$ will result in an upper bound for $L(\Pi,1/2)$, which turn out to be a subconvexity bound in the level aspect. In particular, assume that $\pi_3$ is of prime conductor $\textswab{p}$.
Venkatesh's work together with Woodbury's work on local integrals in \cite{MW12} prove the following:
\begin{equation}\label{Mikessubconv}
L(\Pi,1/2)<<N(\textswab{p})^{1-1/12}.
\end{equation}
Note that the trivial bound for the triple product L-function is when the power is $1$. Any result with power less than 1 counts as a subconvexity bound.

Their result, however, is based on the following conditions:
\begin{enumerate}
 \item[(1)]$\pi_i$ essentially have disjoint ramifications and $\pi_3$ has square-free finite conductor $\textswab{p}$.
 \item[(2)]All the central characters are trivial.
 \item[(3)]The special values of local epsilon factors $\epsilon_v(\Pi,1/2)=1$ for all places.
 \item[(4)]The infinity component of $\pi_3$ is bounded. 
\end{enumerate}

In this paper, we will remove the first three conditions and prove a similar subconvexity bound. So we will allow high ramifications and joint ramifications, and general unitary central characters. The third condition is related to Prasad's thesis work on local trilinear forms, and turns out to be free to remove. This is because, as we will see later,  all key calculations will be done on the $\GL_2$ side.
The last condition is still necessary as it is used to control $L(\Pi,Ad,1)$. Then we will prove in Theorem \ref{thmmain} that for fixed $\pi_1$ and $\pi_2$, and $\pi_3$ with changing finite conductor $\mathcal{N}$, 
\begin{equation}\label{formula2}
 L(\Pi,1/2)<<\text{Nm}(\mathcal{N})^{1-1/12}.
\end{equation}


We shall follow the same strategy. In Section 2 we will review necessary tools and results, as well as derive some new results which will be used in this paper. In Section 3, we basically imitate Venkatesh's proof and get an upper bound for the global integral in more general setting. We will use amplication method and reduce the problem to a bound for global matrix coefficient. 
In Section 4 we will derive the lower bound for local integrals by explicit computations. Assume that $c_3\geq 2\max\{c_1,c_2,1\}$, where $c_i$ is the local level of $\pi_i$ at a finite place $v$. Let $\Phi_i(g)$ be the local matrix coefficients associated to certain elements in $\pi_{i,v}$ to be specified later. Then Theorem \ref{thmlocalint} shows that
\begin{equation}
\int\limits_{\F_v^*\backslash \GL_2(\F_v)}\prod\limits_{i=1}^{3}\Phi_i(g)dg=\frac{(1-A)(1-B)}{(q+1)q^{c_3-1}},
\end{equation}
where $A$ and $B$ are fixed values only depending on $\pi_{1,v}$ and $\pi_{2,v}$. One can further check case by case and show that $A$ and $B$ are bounded away from 1 using the bound towards Ramanujan conjecture.

Before this paper, there is little work on explicit computation for the local integral with ramifications. Woodbury in \cite{MW12} considered the special unramified representations.  In \cite{NPS12}, Nelson, Pitale and Saha computed $I_v$ for higher ramifications, essentially with the assumption that $\pi_1=\pi_2$ (and correspondingly $f_1=f_2$) and $\pi_3$ is unramified.
Their work is based on Lemma (3.4.2) of \cite{MV10}, which relates $I_v$ to the local Rankin-Selberg integral. But this method can't be generalized to the case when all the representations are supercuspidal, which is necessary for our consideration. Their result is given case-by-case, and is quite complicated. So it's quite surprising that in our setting we can get such a simple and nice formula.

In Section 5 we will finish the proof of (\ref{formula2}). In the appendix we will prove the bound for the global matrix coefficient which is used in the proof in Section 3.
\section{Notations and preliminary results}

\subsection{Basic Notations and facts}
Let $\F$ denote a number field. Let $G$ be a reductive algebraic $\F-$group. In this paper we will focus on $G$ being $\GL_2$ or $\D^*$, where $\D$ is a quaternion algebra. 
Let $X=Z_G(\A)G(\F)\backslash G(\A_\F)$. Let $L^2(X)$ be the space of square integrable functions on X, and $<\cdot,\cdot>$ be the natural pairing on it given by 
\begin{equation}
 <f_1,f_2>=\int_Xf_1(g)\overline{f_2(g)}dg.
\end{equation}
Any unitary cuspidal automorphic representation can be naturally embedded into $L^2(X)$ with the compatible unitary pairings.

Let $\F_v$ be the corresponding local field of $\F$ at a place $v$. Let $K_v$ denote the maximal compact subgroup of $G(\F_v)$, and 
\begin{equation}
 K=\prod_vK_v.
\end{equation}
When $v$ is a finite place, let $\varpi_v$ denote a uniformizer of $\F_v$ and $O_{v}$ denote
the ring of integers at $v$. Let $q^{-1}=|\varpi_v|_v$.
Define for an integer $c>0$
\begin{equation}
 K_1(\varpi_v^c)=\{k\in K_v| \text{\ }k\equiv \zxz{*}{*}{0}{1} \mod{(\varpi_v^c)}\}.
\end{equation}


Now we record some basic facts about integrals on $\GL_2(\F_v)$. 
\begin{lem} \label{Iwasawadecomp}
For every positive integer $c$,
$$\GL_2(F_v)=\coprod\limits_{0\leq i\leq c} B\zxz{1}{0}{\varpi_v^i}{1}K_1(\varpi_v^c).$$
Here $B$ is the Borel subgroup of $\GL_2$. 
\end{lem}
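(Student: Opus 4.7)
The plan is to combine the Iwasawa decomposition with an orbit analysis on bottom rows modulo $\varpi_v^c$. First, by the Iwasawa decomposition $\GL_2(\F_v) = B(\F_v) K_v$, every double coset in $B \backslash \GL_2(\F_v)/K_1(\varpi_v^c)$ contains a representative in $K_v$. Since $B(\F_v) \cap K_v = B(O_v)$, the double-coset space $B \backslash \GL_2(\F_v) / K_1(\varpi_v^c)$ is in bijection with $B(O_v) \backslash K_v / K_1(\varpi_v^c)$, and it suffices to analyze the latter.

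Next, I would encode each $k \in K_v$ by its bottom row $(k_{21}, k_{22}) \in O_v^2$ modulo $\varpi_v^c$, which is a primitive pair in $(O_v/\varpi_v^c)^2$. A direct computation shows that right multiplication by $k_1 \in K_1(\varpi_v^c)$ transforms the bottom row as $(k_{21}, k_{22}) \mapsto (x\, k_{21},\, y\, k_{21} + k_{22})$ modulo $\varpi_v^c$, where $x \in (O_v/\varpi_v^c)^*$ and $y \in O_v/\varpi_v^c$ are the entries of the top row of $k_1$; meanwhile, left multiplication by $\zxz{u}{t}{0}{w} \in B(O_v)$ scales the bottom row uniformly by $w \in O_v^*$. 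Both actions preserve the $\varpi_v$-adic valuation of $k_{21}$ (with the convention that this valuation equals $c$ when $k_{21} \equiv 0 \pmod{\varpi_v^c}$), so this valuation $i \in \{0, 1, \ldots, c\}$ is a well-defined invariant of the double coset.

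To finish, I would verify that this invariant is complete by showing that any primitive pair with valuation $i$ in the first entry can be transformed, via suitable choices of $x$, $y$, $w$, to the canonical pair $(\varpi_v^i, 1)$, which is exactly the bottom row of $\zxz{1}{0}{\varpi_v^i}{1}$. Together with the invariance above, this simultaneously establishes surjectivity (the $c+1$ listed representatives exhaust every double coset) and disjointness (they are pairwise inequivalent).

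The main bookkeeping point will be the right $K_1(\varpi_v^c)$-action on the bottom row, which not only scales the first entry by a unit but also shifts the second by a multiple of the first. Getting this right is essential: without the shift one might wrongly expect the second entry to give an additional invariant, but in fact this shift freedom collapses all second-entry possibilities into a single orbit, leaving the valuation of $k_{21}$ as the unique invariant of the double coset.
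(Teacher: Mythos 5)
The paper records this lemma without proof, so there is no argument of its own to compare against. Your reduction to $B(O_v)\backslash K_v/K_1(\varpi_v^c)$ and the computation of the two actions on the bottom row are both correct, and the truncated valuation of $k_{21}$ is indeed a well-defined invariant, which gives disjointness.

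The step that deserves to be made explicit is the passage from ``the bottom row can be driven to $(\varpi_v^i,1)$ modulo $\varpi_v^c$'' to ``$k$ lies in $B\zxz{1}{0}{\varpi_v^i}{1}K_1(\varpi_v^c)$,'' which is what surjectivity actually requires. Your transformations control the bottom row of $k$ only up to $\varpi_v^c O_v$, so you still need to know that two elements of $K_v$ whose bottom rows agree modulo $\varpi_v^c$ lie in the same $(B(O_v),K_1(\varpi_v^c))$-double coset. This is true but is not a formal consequence of what you wrote. The cleanest justification: since the principal congruence subgroup $K(\varpi_v^c)$ is contained in $K_1(\varpi_v^c)$, the whole double-coset problem descends to $\GL_2(O_v/\varpi_v^c)$, and over that ring two matrices with the same bottom row differ on the left by a matrix whose bottom row is $(0,1)$, hence upper triangular. (Equivalently, if you keep the $(2,1)$-entry $z\in\varpi_v^c O_v$ of an element of $K_1(\varpi_v^c)$ in the bookkeeping, the extra contribution $k_{22}z$ to the first coordinate is exactly what lets you land on the canonical pair on the nose rather than only modulo $\varpi_v^c$; this matters in particular for $i=c$, where the reduction mod $\varpi_v^c$ collapses $k_{21}$ to $0$ but the integral entry need not be $\varpi_v^c$.) With that observation supplied, your argument is complete.
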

We normalize the Haar measure on $\GL_2(\F_v)$ such that $K_v$ has volume 1. Then we have the following easy result (see, for example, \cite[Appendix A]{YH13}).
\begin{lem}\label{localintcoefficient}
Locally let $f$ be a $K_1(\varpi_v^c)-$invariant function, on which the center acts trivially. Then
 \begin{equation}
  \int\limits_{F_v^*\backslash\GL_2(\F_v)}f(g)dg=\sum\limits_{0\leq i\leq c}A_i\int\limits_{\F_v^*\backslash B(\F_v)}f(b\zxz{1}{0}{\varpi_v^i}{1})db.
 \end{equation}
 
Here $db$ is the left Haar measure on $\F_v^*\backslash B(\F_v)$, and
$$A_0=\frac{q}{q+1}\text{,\ \ \ }A_c=\frac{1}{(q+1)q^{c-1}}\text{,\ \ \  and\ }A_i=\frac{q-1}{(q+1)q^i}\text{\ for\ }0<i<c.$$
\end{lem}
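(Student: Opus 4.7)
The plan is to combine Lemma~\ref{Iwasawadecomp} with the standard Iwasawa decomposition $\GL_2(\F_v)=BK_v$ and then exploit the right $K_1(\varpi_v^c)$-invariance of $f$. Under the normalization $\Vol(K_v)=1$, Haar measure decomposes as $dg=d\bar b\,dk$ on $\F_v^*\backslash\GL_2(\F_v)$, giving
\[
\int_{\F_v^*\backslash\GL_2(\F_v)}f(g)\,dg=\int_{\F_v^*\backslash B(\F_v)}\int_{K_v}f(bk)\,dk\,d\bar b.
\]
Right invariance of $f$ under $K_1(\varpi_v^c)$ reduces the inner integral to a finite sum over $K_v/K_1(\varpi_v^c)$ weighted by $\Vol(K_1(\varpi_v^c))$.

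Next, intersecting Lemma~\ref{Iwasawadecomp} with $K_v$ gives $K_v=\coprod_{i=0}^{c}(B\cap K_v)\gamma_i K_1(\varpi_v^c)$ with $\gamma_i=\zxz{1}{0}{\varpi_v^i}{1}$. Every coset $\kappa K_1\in K_v/K_1(\varpi_v^c)$ is then of the form $b'\gamma_i K_1$ for a unique $i$ and some $b'$ running over representatives of $(B\cap K_v)/H_i$, where $H_i=(B\cap K_v)\cap\gamma_i K_1(\varpi_v^c)\gamma_i^{-1}$. Since the measure $d\bar b$ on $\F_v^*\backslash B$ is left-invariant, the substitution $b\mapsto b(b')^{-1}$ absorbs $b'$ into the integration variable, and the identity collapses to
\[
\int_{\F_v^*\backslash\GL_2(\F_v)}f(g)\,dg=\sum_{i=0}^{c}A_i\int_{\F_v^*\backslash B(\F_v)}f(b\gamma_i)\,d\bar b,\qquad A_i=\frac{[(B\cap K_v):H_i]}{[K_v:K_1(\varpi_v^c)]}.
\]

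The remaining task is to evaluate the indices $[(B\cap K_v):H_i]$ explicitly. Writing $b=\zxz{a}{b_0}{0}{d}\in B\cap K_v$ and expanding $\gamma_i^{-1}b\gamma_i$, one computes that $b\in H_i$ precisely when $a\equiv 1\pmod{\varpi_v^{c-i}}$ and $d\equiv 1+b_0\varpi_v^i\pmod{\varpi_v^c}$. A parameter count modulo $\varpi_v^c$ within the image $B(O_v/\varpi_v^c)$, combined with the denominator $[K_v:K_1(\varpi_v^c)]=(q^2-1)q^{2c-2}$, then produces the three stated values of $A_i$.

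The main obstacle is the case analysis at the endpoints $i=0$ and $i=c$, where the congruences degenerate. At $i=c$ the condition on $a$ becomes vacuous, so $a$ runs freely over $(O_v/\varpi_v^c)^*$; at $i=0$ the condition $d\equiv 1+b_0\pmod{\varpi_v^c}$ forces an extra restriction on $b_0$ (to ensure $d\in O_v^*$). These degeneracies are exactly what produce the asymmetric values $A_0=q/(q+1)$ and $A_c=1/((q+1)q^{c-1})$, distinct from the interior formula $A_i=(q-1)/((q+1)q^i)$; keeping the parameter counts consistent at the boundary is the only real bookkeeping challenge, after which summing yields $\sum_i A_i=1$ as a pleasant sanity check.
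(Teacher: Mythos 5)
Your proof is correct, and since the paper itself only cites this lemma to \cite[Appendix A]{YH13} rather than proving it, your argument supplies exactly the expected route: restrict the decomposition of Lemma~\ref{Iwasawadecomp} to $K_v$, count $K_1(\varpi_v^c)$-cosets within each piece $(B\cap K_v)\gamma_i K_1(\varpi_v^c)$ via the indices $[(B\cap K_v):H_i]$, and divide by $[K_v:K_1(\varpi_v^c)]=(q^2-1)q^{2c-2}$. I verified the congruence conditions for $H_i$ and the resulting three values of $A_i$, including the endpoint degeneracies, and they all check out (the only wording quibble is that absorbing $b'$ uses right-translation invariance of $d\bar b$, which holds because $b'$ lies in the compact subgroup $B\cap K_v$ where the modular character of $B$ is trivial).
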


\subsection{Integral representation of special values of Triple product $L-$function}
The story begins with Prasad's thesis work. For the triple product L-function $L(\Pi,s)$, there exist local epsilon factors $\epsilon_v(\Pi_v,\psi_v,s)$ and global epsilon factor $\epsilon(\Pi,s)=\prod_v\epsilon(\Pi_v,\psi_v,s)$, such that,
\begin{equation}
 L(\Pi,1-s)=\epsilon(\Pi,s)L(\check{\Pi},s).
\end{equation}
With the assumption that $\prod_i w_{\pi_i}=1$, we have $$\Pi\cong\check{\Pi}.$$ The special values of local epsilon factors $\epsilon_v(\Pi_v,\psi_v,1/2)$ are actually independent of $\psi_v$ and always take value $\pm 1$. For simplicity, we will write 
$$\epsilon_v(\Pi_v,1/2)=\epsilon_v(\Pi_v,\psi_v,1/2).$$ For any place $v$, there is a unique (up to isomorphism) division algebra $\D_v$. Then
Prasad proved in \cite{Prasad} the following theorem about the dimension of the space of local trilinear forms:
\begin{theo}
\begin{enumerate}
 \item $\dim\Hom_{\GL_2(\F_v)}(\Pi_v,\C)\leq 1$, with the equality if and only if $\epsilon_v(\Pi_v,1/2)=1$.
 \item $\dim\Hom_{\D_v}(\Pi^{\D_v}_v,\C)\leq 1$, with the equality if and only if $\epsilon_v(\Pi_v,1/2)=-1$.
\end{enumerate}
Here $\Pi^{\D_v}_v$ is the image of $\Pi_v$ under Jacquet-Langlands correspondence.
\end{theo}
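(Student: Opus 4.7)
The plan is to first establish the two dimension bounds by Mackey theory, and then identify the side on which equality occurs via a dichotomy controlled by $\epsilon_v(\Pi_v,1/2)$.

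For the bounds $\dim \Hom_{\GL_2(\F_v)}(\Pi_v, \C) \leq 1$ and $\dim \Hom_{\D_v^*}(\Pi_v^{\D_v}, \C) \leq 1$, I would realize two of the factors, say $\pi_{1,v}\otimes\pi_{2,v}$, in their Whittaker/Kirillov models. The invariance of a putative trilinear form under the diagonal group reduces, via the Bruhat decomposition and Frobenius reciprocity, to a space of twisted coinvariants for the unipotent radical of a Borel. Uniqueness of the Whittaker model gives at most a one-dimensional contribution from the open orbit, while the central-character condition $\prod_i w_{\pi_i} = 1$ together with the genericity of the $\pi_{i,v}$ rules out contributions from closed orbits. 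On the division-algebra side, $\D_v^*/\F_v^*$ is compact and $\Pi_v^{\D_v}$ is finite-dimensional, so the analogous bound follows by a character-theoretic manipulation.

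The difficult half is existence, together with the matching of the two dimensions to the sign of $\epsilon_v(\Pi_v,1/2)$. The conceptual input is a dichotomy: the two Hom-dimensions always sum to exactly $1$. I would establish this by combining a see-saw/theta argument attached to the reduced norm form on $\D_v$ with a global input, embedding the local data into an automorphic setting and using the Jacquet--Langlands correspondence together with non-vanishing of an Ichino-type period to guarantee that at least one of the two local Hom-spaces is nonzero. Once the dichotomy is known, identifying which side is nonzero reduces to a case-by-case verification over the Bernstein--Zelevinsky classification of the $\pi_{i,v}$ (principal series, Steinberg, supercuspidal).

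The principal series cases are handled explicitly: the trilinear form on $\GL_2(\F_v)$ is constructed from a convergent integral of matrix coefficients against a section in the induced model, and the local functional equation ties non-vanishing to the epsilon factor. The main obstacle is the supercuspidal case, where no induced model is available; one would realize each $\pi_{i,v}$ by compact induction from an open subgroup compact modulo the center, decompose $\Pi_v$ by Mackey, and match the parity of the resulting invariant form against $\epsilon_v(\Pi_v,1/2)$ through the local Langlands correspondence. This parity analysis is the most technical point and forms the core of Prasad's thesis; for the purposes of the present paper I would simply cite it as a black box, since, as emphasized in the introduction, all subsequent local calculations are carried out on the $\GL_2$ side and are insensitive to the value of $\epsilon_v(\Pi_v,1/2)$.
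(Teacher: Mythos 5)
The paper supplies no proof of this theorem: it is stated as a direct quotation of Prasad's result, and the text merely points the reader to \cite{Prasad}. That is exactly where your proposal lands as well, since you conclude by treating Prasad's theorem as a black box, so at the level of the paper's logic your approach coincides with the paper's.

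As a reconstruction of Prasad's own argument, your sketch is in the right spirit for the multiplicity-one bound (orbit analysis via Mackey theory/Bruhat decomposition and uniqueness of Whittaker models), and correctly identifies the dichotomy $\dim\Hom_{\GL_2}+\dim\Hom_{\D^*}=1$ as the conceptual core. However, the route you propose to the dichotomy, namely embedding into an automorphic setting and invoking Jacquet--Langlands together with non-vanishing of an Ichino-type period, is anachronistic: Prasad's 1990 proof is local, resting on theta/Weil-representation seesaw arguments and explicit computations in the principal-series and Steinberg cases, whereas the global period formula of Ichino (2008) and the results of Harris--Kudla (1991, 2004) that your argument would require actually postdate Prasad's thesis and were in part motivated by it. Since both you and the paper ultimately take the theorem as an external input, this discrepancy in the reconstructed sketch does not affect anything downstream in the paper.
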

This motivated the following result which is conjectured by Jacquet and later on proved by Harris and Kudla in \cite{H&K91} and \cite{H&K04}:
\begin{theo}\label{thmofJacquetconj}
 $$\{L(\Pi,1/2)\neq 0\}  \Longleftrightarrow \left\{ \begin{array}{c}
                                                    \text{ there exist\ } \D\text{\ and\ } f_i\in \pi^{\D} \text{\ s.t.}\\
                                                    \int\limits_{Z_{\A}\D^*(\F)\backslash \D^*(\A)} f_1(g)f_2(g)f_3(g) dg\neq 0
                                                   \end{array}\right\}
 $$
\end{theo}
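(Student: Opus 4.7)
The easier direction ($\Leftarrow$) should follow directly from Ichino's formula (\ref{formula1}). If some choice of $\D$ and $f_i\in\pi_i^{\D}$ makes the global period nonzero, then the square of its absolute value equals a positive multiple of $L(\Pi,1/2)\prod_v I_v$ divided by $L(\Pi,\mathrm{Ad},1)$. Since $L(\Pi,\mathrm{Ad},1)$ is a finite nonzero real number for unitary cuspidal $\Pi$ (no zero at $s=1$) and $\zeta_{\F}(2)\neq 0$, one reads off $L(\Pi,1/2)\neq 0$ at once.

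For the harder direction ($\Rightarrow$), my plan is first to use Prasad's dichotomy to pin down the unique candidate $\D$. Set $S=\{v:\epsilon_v(\Pi_v,1/2)=-1\}$ and take $\D/\F$ ramified precisely at $S$. To justify that such $\D$ exists I need $|S|$ to be even; this follows from the functional equation $L(\Pi,1-s)=\epsilon(\Pi,s)L(\check{\Pi},s)$, combined with $\Pi\cong\check{\Pi}$ (from $\prod_i w_{\pi_i}=1$) and the hypothesis $L(\Pi,1/2)\neq 0$, which together force $\epsilon(\Pi,1/2)=\prod_v\epsilon_v(\Pi_v,1/2)=+1$. By Prasad's theorem this $\D$ is the only one for which the local trilinear Hom spaces are simultaneously nonzero, so any candidate forms must live on this $\D$. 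The remaining task is to actually produce such forms. The strategy, due to Harris and Kudla, is to reinterpret the triple integral via a see-saw dual pair attached to the rank-four orthogonal space $(\D,\mathrm{Nm})$. Garrett's Siegel-Eisenstein integral representation on $\mathrm{GSp}(6)$, restricted along $\D^*\times\D^*\times\D^*$, unfolds by a see-saw identity into inner products of theta lifts; the regularized Siegel-Weil formula and doubling calculation then identify the resulting theta integrals with central $L$-values up to explicit local factors.

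The main obstacle is precisely this last step: deducing nonvanishing of the theta period from nonvanishing of $L(\Pi,1/2)$. The doubling method readily gives the reverse implication, but extracting nonzero forms from a nonzero $L$-value is delicate. Harris-Kudla overcome this in two stages: first \cite{H&K91} handles the case where one factor is an Eisenstein series, reducing via a Waldspurger-type formula to simpler central values, and then \cite{H&K04} extends to the fully cuspidal case through an inductive argument that toggles between the split and ramified quaternion algebras while preserving nonvanishing of the relevant $L$-value. Matching the local Schwartz data to Prasad's local test vectors at every place, so that no local factor vanishes in the see-saw identity, together with establishing the regularized Siegel-Weil identity needed to bridge theta integrals and $L$-values, is the technical heart that I would have to absorb in order to write out the proof in full.
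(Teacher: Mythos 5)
The paper does not actually prove this statement: it is quoted as a theorem of Harris and Kudla (with references to \cite{H&K91} and \cite{H&K04}) and used as a black box, so there is no internal proof to compare your argument against. That said, your outline is faithful to the external proof. The $\Leftarrow$ direction via the explicit Ichino-type formula (\ref{Globaltriple}) together with Prasad's local dichotomy (which forces any $\D$ with a nonzero period to be the one on which the local $\operatorname{Hom}$ spaces are nonzero) is logically sound, granting nonvanishing of $L(\Pi,\mathrm{Ad},1)$. Your $\Rightarrow$ direction is also organized correctly: the parity argument $L(\Pi,1/2)\neq 0 \Rightarrow \epsilon(\Pi,1/2)=+1 \Rightarrow$ the set $S=\{v:\epsilon_v(\Pi_v,1/2)=-1\}$ has even cardinality is exactly what produces the unique candidate $\D$, and the see-saw / theta-lift / regularized Siegel--Weil machinery is indeed the engine of Harris--Kudla. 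You are candid that the nonvanishing of the theta period, i.e.\ the passage from a nonzero $L$-value to a nonzero form, is the real content and would require absorbing \cite{H&K91} and \cite{H&K04}; since the paper also treats this theorem as imported, this level of detail is appropriate, and nothing in your sketch misrepresents the actual route.
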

This result hints that  $$\int\limits_{Z_{\A}\D^*(\F)\backslash \D^*(\A)} f_1(g)f_2(g)f_3(g) dg$$ could be a potential integral representation of special value of triple product L-function. Later on there are a lot of work on explicitly relating both sides. In particular one can see Ichino's work in \cite{Ichino}. We only need a special version here (as in the introduction).

\begin{equation}\label{Globaltriple}
 |\int\limits_{Z_{\A}\D^*(\F)\backslash \D^*(\A)} f_1(g)f_2(g)f_3(g) dg|^2=\frac{\zeta_\F^2(2)L(\Pi,1/2)}{8L(\Pi,Ad,1)}\prod_vI_v,
\end{equation}
where $f_i\in \pi_i^{\D}$ for the specific quaternion algebra $\D$ as in the theorem above, and the local integral $I_v$ can be formulated as follows:
\begin{equation}\label{localtripleF}
I_v(f_1,f_2,f_3)=\frac{L_v(\Pi_v,Ad,1)}{\zeta_v^2(2)L_v(\Pi_v,1/2)}\int\limits_{\F_v^*\backslash \D^*(\F_v)}\prod\limits_{i=1}^{3}<\pi^{\D}_i(g)f_{i,v},f_{i,v}>dg.
\end{equation}
Here $<\cdot,\cdot>$ is a bilinear and $\D^*(\F_v )-$ invariant unitary pairing for $\pi^{\D}_{i,v}$.
At unramified places, this local integral is 1.

\subsection{Hecke operators} For the beginning of this subsection one can also see \cite{AV10}.
Let $f$ be a function on a group $G$ and $\sigma $ a compactly supported measure on $G$. Define the convolution of $f$ with $\sigma$ by 
\begin{equation}
 f*\sigma(x)=\int\limits_{g}f(xg)d\sigma(g).
\end{equation}
If $\sigma_1$ and $\sigma_2$ are two compactly supported measures on $G$, we define the convolution $\sigma_1*\sigma_2$ to be the pushforward to $G$ of $\sigma_1\times\sigma_2$ on $G\times G$, under the multiplication map 
\begin{equation}
 (g_1,g_2)\in G\times G\mapsto g_1g_2.
\end{equation}
Then one has the following compatibility relation
\begin{equation}
 (f*\sigma_2)*\sigma_1=f*(\sigma_1*\sigma_2).
\end{equation}
Now we introduce the Hecke operators in this language. At a non-archimedean place $v$, let $\mathfrak{l}$ be a maximal prime ideal and $r$ be an integer $\geq 0$. Define the measure $\mu^*_{\mathfrak{l}^r}$ on $\GL_2(\F_v)$ to be the restriction of Haar measure to the set 
$$K\zxz{\varpi^r}{0}{0}{1}K,$$
so that the total mass of $\mu^*_{\mathfrak{l}^r}$ is $\begin{cases}
                                                  (q+1)q^{r-1}, &\text{\ if\ }r\geq 1;\\
                                                  1, &\text{\ if\ }r=0.
                                                 \end{cases}$

Define
\begin{equation}
 \mu_{\mathfrak{l}^r}=\frac{1}{q^{r/2}}\sum\limits_{0\leq k\leq r/2}\mu^*_{\mathfrak{l}^{r-2k}}.
\end{equation}
Via the natural inclusion of $\GL_2(\F_v)$ in $\GL_2(\A_{\F,f})$, we can regard $ \mu_{\mathfrak{l}^r}$ as a compactly supported measure on $\GL_2(\A_{\F,f})$. If $\mathfrak{n}$ is an integral ideal $\prod_v{\mathfrak{l}_v^{r_v}}$, define
\begin{equation}
 \mu_{\mathfrak{n}}=\prod_v\mu_{\mathfrak{l}_v^{r_v}}.
\end{equation}
Convolution by $\mu_\mathfrak{n}$ can be thought of as $\mathfrak{n}-$th Hecke operator.

For functions on which the center acts trivially, convolution with $\mu_\mathfrak{n}$ is a self-dual operator, that is,
\begin{equation}\label{formulaofselfdualHecke}
\int\limits_{G(\A)} f_1\cdot (f_2*\mu_\mathfrak{n})dg=\int\limits_{G(\A)} (f_1*\mu_\mathfrak{n})\cdot f_2 dg.
\end{equation}
Similarly one can see that 
\begin{equation}
\int\limits_{G(\A)}f(xg)d\mu_\mathfrak{n}(g)=\int\limits_{G(\A)}f(xg^{-1})d\mu_\mathfrak{n}(g).
\end{equation}
Further we have the following nice lemma about compositions of Hecke operators:
\begin{lem}\label{convolutionofHecke}
 Let $\mathfrak{n}$, $\mathfrak{m}$ be ideals. Let $h$ be a function on $G(\A_\F)$ that is spherical at all places $v|\mathfrak{n}\mathfrak{m}$, and the center acts on $h$ trivially. Then 
 \begin{equation}
  \int_{G(\A_\F)}h(x)d(\mu_\mathfrak{n}*\mu_\mathfrak{m})(x)=\sum_{\mathfrak{d}|(\mathfrak{n},\mathfrak{m})}\int_{G(\A_\F)}h(x)d\mu_{\mathfrak{n}\mathfrak{m}\mathfrak{d}^{-2}}(x). \end{equation}
\end{lem}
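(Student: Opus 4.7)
The plan is to factor everything place-by-place and reduce to the spherical Hecke algebra identity at a single prime. Writing $\mathfrak{n} = \prod_v \mathfrak{l}_v^{a_v}$ and $\mathfrak{m} = \prod_v \mathfrak{l}_v^{b_v}$, the definition of $\mu_\mathfrak{n}$ and $\mu_\mathfrak{m}$ as products of local measures, together with the fact that convolution of product measures on the restricted direct product $G(\A_\F)$ factors over the factors, gives
\[
\mu_\mathfrak{n} * \mu_\mathfrak{m} = \bigotimes_v \bigl(\mu_{\mathfrak{l}_v^{a_v}} * \mu_{\mathfrak{l}_v^{b_v}}\bigr).
\]
At $v \nmid \mathfrak{n}\mathfrak{m}$ both factors equal Haar on $K_v$ and contribute trivially; only primes $v \mid \mathfrak{n}\mathfrak{m}$ matter. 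By Fubini, matters reduce to proving the local identity
\[
\int_{G(\F_v)} h_v(x)\, d\bigl(\mu_{\mathfrak{l}^a} * \mu_{\mathfrak{l}^b}\bigr)(x) = \sum_{k=0}^{\min(a,b)} \int_{G(\F_v)} h_v(x)\, d\mu_{\mathfrak{l}^{a+b-2k}}(x)
\]
for every $K_v$-bi-invariant $h_v$ on which the center acts trivially, which is precisely what the hypothesis on $h$ provides.

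For the local identity I would appeal to the Satake isomorphism. The convolution algebra of compactly supported $K_vZ_v$-bi-invariant measures on $\GL_2(\F_v)$ is commutative, and the normalizing factor $q^{-r/2}$ in the definition of $\mu_{\mathfrak{l}^r}$ is chosen exactly so that, under Satake, $\mu_{\mathfrak{l}^r}$ corresponds to the character of the $(r+1)$-dimensional irreducible representation $\mathrm{Sym}^r$ of the dual group $\mathrm{SL}_2(\C)$, i.e.\ to the Chebyshev polynomial $U_r$ in the Satake parameter. The Clebsch--Gordan decomposition $\mathrm{Sym}^a \otimes \mathrm{Sym}^b = \bigoplus_{k=0}^{\min(a,b)} \mathrm{Sym}^{a+b-2k}$ then immediately yields $U_a \cdot U_b = \sum_{k=0}^{\min(a,b)} U_{a+b-2k}$, which is exactly the local identity above.

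Finally, the global assembly is pure bookkeeping: expanding the product over $v \mid \mathfrak{n}\mathfrak{m}$ using the local identity gives
\[
\mu_\mathfrak{n} * \mu_\mathfrak{m} = \sum_{(k_v)} \bigotimes_v \mu_{\mathfrak{l}_v^{a_v + b_v - 2k_v}},
\]
where each $k_v$ runs over $0 \leq k_v \leq \min(a_v,b_v)$. The natural bijection between such tuples $(k_v)$ and divisors $\mathfrak{d} = \prod_v \mathfrak{l}_v^{k_v}$ of $(\mathfrak{n},\mathfrak{m})$ puts the right-hand side in the desired form $\sum_{\mathfrak{d} \mid (\mathfrak{n},\mathfrak{m})} \mu_{\mathfrak{n}\mathfrak{m}\mathfrak{d}^{-2}}$.

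I expect the main obstacle to be justifying the Satake normalization of $\mu_{\mathfrak{l}^r}$, since the clean Chebyshev identity depends crucially on the specific factor $q^{-r/2}$. If one wishes to avoid this black box, the alternative is an induction on $\min(a,b)$ starting from the base case $\mu_{\mathfrak{l}} * \mu_{\mathfrak{l}^r} = \mu_{\mathfrak{l}^{r+1}} + \mu_{\mathfrak{l}^{r-1}}$, which can be verified directly by decomposing the double coset $K\,\mathrm{diag}(\varpi,1)\,K$ into its $q+1$ single cosets and computing against the definition of $\mu_{\mathfrak{l}^r}$; the induction step uses the same coset computation, and the $q^{-r/2}$ normalization is precisely what clears the resulting denominators.
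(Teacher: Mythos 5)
The paper states this lemma without proof, so there is no in-paper argument to compare against; your proof is correct and uses the standard approach. The reduction to a local identity via factorization over places is right, and so is the conclusion via Clebsch--Gordan. The one step you rightly flag as the real content is the claim that $\mu_{\mathfrak{l}^r}$ Satake-transforms to the character $U_r(\alpha) = \alpha^r + \alpha^{r-2} + \cdots + \alpha^{-r}$ of $\mathrm{Sym}^r$, and here your phrasing is slightly misleading: the normalizing factor $q^{-r/2}$ alone does not do this, since $\mu^*_{\mathfrak{l}^r}$ by itself has Satake transform $q^{r/2}\bigl(U_r - q^{-1}U_{r-2}\bigr)$ (with $U_{-1}=0$), not $q^{r/2}U_r$. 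It is precisely the sum $\sum_{0 \leq k \leq r/2}\mu^*_{\mathfrak{l}^{r-2k}}$ in the paper's definition that causes the $-q^{-1}U_{s-2}$ terms to telescope, leaving $q^{r/2}U_r$, after which the prefactor $q^{-r/2}$ clears the power of $q$. Once this telescoping computation is written out, the identity $U_a U_b = \sum_{k=0}^{\min(a,b)} U_{a+b-2k}$ and the bijection $(k_v)_v \leftrightarrow \mathfrak{d} = \prod_v \mathfrak{l}_v^{k_v}$ finish the argument exactly as you describe. Your alternative inductive route via $\mu_{\mathfrak{l}} * \mu_{\mathfrak{l}^r} = \mu_{\mathfrak{l}^{r+1}} + \mu_{\mathfrak{l}^{r-1}}$ is equally valid and has the advantage of avoiding the Satake black box entirely, at the cost of the direct coset computation you mention.
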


We will also need to consider, however, functions on which the center acts by a non-trivial unitary character $w$. From now on we will only consider operators of form $\mu_\mathfrak{l}$ or $\mu_{\mathfrak{l}^2}$ at a finite place $v$. Let $\check{\mu_\mathfrak{l}}$ be the dual of $\mu_\mathfrak{l}$ in the sense of (\ref{formulaofselfdualHecke}). Then one can easily check that
\begin{equation}\label{Hecke1}
\check{\mu_\mathfrak{l}}=w(\varpi_v^{-1})\mu_\mathfrak{l}=\frac{w(\varpi_v^{-1})}{\sqrt{q}}\mu_\mathfrak{l}^*.
\end{equation}
Similarly let $\check{\mu_{\mathfrak{l}^2}}$ be the dual of $\mu_{\mathfrak{l}^2}$. Then
\begin{equation}\label{Hecke2}
\check{\mu_{\mathfrak{l}^2}}=\frac{1}{q}(w(\varpi_v^{-2})\mu_{\mathfrak{l}^2}^*+\mu_\mathfrak{o}^*).
\end{equation}
When acting on spherical functions, $\mu_\mathfrak{l}^*$ and $\mu_{\mathfrak{l}^2}^*$ are related as follows:
\begin{equation}\label{Hecke3}
\mu_\mathfrak{l}^**\mu_\mathfrak{l}^*=\mu_{\mathfrak{l}^2}^*+(q+1)w(\varpi_v)\mu_\mathfrak{o}^*.
\end{equation}
Now let $\check{\lambda_\mathfrak{l}}$ and $\check{\lambda_{\mathfrak{l}^2}}$ be the eigenvalues of $\check{\mu_\mathfrak{l}}$ and $\check{\mu_{\mathfrak{l}^2}}$ acting on a given spherical function. Putting (\ref{Hecke1}), (\ref{Hecke2}) and (\ref{Hecke3}) together, we have
\begin{equation}
\check{\lambda_{\mathfrak{l}^2}}=\check{\lambda_\mathfrak{l}}^2+(q^{-1}-\frac{q+1}{qw(\varpi_v)}).
\end{equation}
Note 
$$|q^{-1}-\frac{q+1}{qw(\varpi_v)}|\geq 1.$$ 
Then one can easily check that,
\begin{cor}\label{corofboundoneigenvalues}
$$|\check{\lambda_{\mathfrak{l}^2}}|+|\check{\lambda_\mathfrak{l}}|\geq 1.$$
\end{cor}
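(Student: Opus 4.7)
The plan is to exploit the algebraic relation $\check{\lambda_{\mathfrak{l}^2}}=\check{\lambda_\mathfrak{l}}^2+C$, where
$$C=q^{-1}-\frac{q+1}{qw(\varpi_v)},$$
together with the key estimate $|C|\geq 1$ which is already recorded just before the corollary. The whole argument is then a one-line application of the triangle inequality, separated into two cases depending on the size of $|\check{\lambda_\mathfrak{l}}|$.

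First I would verify (briefly, for completeness) the inequality $|C|\geq 1$. Since $w$ is unitary, $w(\varpi_v)=e^{i\theta}$ for some real $\theta$, and a direct computation of $|C|^2$ gives
$$\left|q^{-1}-\tfrac{q+1}{q}e^{-i\theta}\right|^2=\frac{1-2(q+1)\cos\theta+(q+1)^2}{q^2},$$
which is minimized at $\cos\theta=1$, yielding value exactly $1$. Thus $|C|\geq 1$ always.

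Next I would set $a=\check{\lambda_\mathfrak{l}}$ and $b=\check{\lambda_{\mathfrak{l}^2}}$. From $b=a^2+C$ and the triangle inequality,
$$1\leq |C|=|b-a^2|\leq |b|+|a|^2.$$
If $|a|\leq 1$, then $|a|^2\leq |a|$, so $|a|+|b|\geq |a|^2+|b|\geq 1$. If instead $|a|>1$, then trivially $|a|+|b|\geq |a|>1$. Either way the desired bound $|\check{\lambda_{\mathfrak{l}^2}}|+|\check{\lambda_\mathfrak{l}}|\geq 1$ follows.

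There is essentially no obstacle here: the work was already done in deriving the identity $\check{\lambda_{\mathfrak{l}^2}}=\check{\lambda_\mathfrak{l}}^2+C$ from the Hecke relations (\ref{Hecke1})--(\ref{Hecke3}), and the corollary is a formal consequence. The only thing one has to be slightly careful about is that the estimate is sharp (attained when $w(\varpi_v)=1$), so no stronger uniform lower bound can be extracted from this argument alone; the corollary as stated is exactly what the algebra gives.
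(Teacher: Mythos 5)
Your argument is correct, and it is exactly the routine verification the paper is leaving implicit behind the phrase ``one can easily check'': you use only the recorded relation $\check{\lambda_{\mathfrak{l}^2}}=\check{\lambda_\mathfrak{l}}^2+C$ and the bound $|C|\geq 1$, then the triangle inequality with the obvious case split on $|\check{\lambda_\mathfrak{l}}|\lessgtr 1$. Nothing to add.
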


\subsection{Bounds for matrix coefficient}
If $\pi$ is an irreducible unitary cuspidal automorphic representation, then its local component at $v$ is also a unitary representation. At a non-archimedean place, it can be classified into one of the following four types:
\begin{enumerate}
 \item supercuspidal representation;
 \item $\pi(\chi_1,\chi_2)$ where $\chi_i$ are unitary characters;
 \item special representation $\sigma(\chi|\cdot|^{1/2},\chi|\cdot|^{-1/2})$ where $\chi$ is unitary;
 \item $\pi(\chi|\cdot|^{\tau},\chi|\cdot|^{-\tau})$, where $\chi$ is unitary and $0<\tau<1/2$.
\end{enumerate}
The first three types are tempered representations. The generalized Ramanujan Conjecture implies that only tempered representations can be the local component of a unitary cuspidal automorphic representation. What is known is a bound $\alpha$ towards Ramanujan conjecture. This means if type (4) ever happens, then $\tau<\alpha$.
The smaller $\alpha$ is, the closer we are to the Ramanujan Conjecture for $\GL_2$. For our purpose, any $\alpha<1/4$ would be enough to get a subconvexity bound. The current record is $\alpha=7/64$. See \cite{Kim}, \cite{BB10}.

Using the bound towards Ramanujan Conjecture, one can bound the matrix coefficient for the local component of a unitary cuspidal automorphic representation.

Locally for $f_1\in\pi_v\cong\pi(\chi_1,\chi_2)$, $f_2\in \check{\pi_v}\cong\pi(\chi_1^{-1},\chi_2^{-1})$ in the standard model for the induced representations, we can define the pairing by
\begin{equation}
 <f_1,f_2>=\int\limits_{K_v}f_1(k)f_2(k)dk.
\end{equation}
We can define the matrix coefficient of $\pi_v$ associated to $f_1$, $f_2$ as
\begin{equation}
\Phi(g)= <\pi_v(g)f_1,f_2>.
\end{equation}

See later subsections for the alternative definition and the definition when the representation is supercuspidal.

 We first record here the matrix coefficient for spherical elements. (See for example, \cite{Bump}.) For simplicity, let $\chi_i$ denote $\chi_i(\varpi_v)$ in the following formulae if we don't specify which element the characters are taking.
\begin{lem}\label{lemofMCforUNram}
 Let $\pi=\pi(\chi_1,\chi_2)$ be an unramified unitary representation of $\GL_2$. Let $\Phi$ be the matrix coefficient associated to normalized newforms in $\pi$. Then it's bi-$K-$invariant and 
 \begin{equation}
  \Phi(\zxz{\varpi_v^n}{0}{0}{1})=\frac{q^{-n/2}}{1+q^{-1}}\frac{\chi_1^n(\chi_1-\chi_2q^{-1})-\chi_2^n(\chi_2-\chi_1q^{-1})}{\chi_1-\chi_2}.
 \end{equation}

\end{lem}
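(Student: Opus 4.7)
The plan is to derive this Macdonald-type formula for $\Phi$ via a Hecke eigenvalue recursion together with two initial values. Write $z_i=\chi_i(\varpi_v)$ and $t=\zxz{\varpi_v}{0}{0}{1}$, and let $f_0\in\pi_v$, $\tilde f_0\in \check{\pi_v}$ be the spherical vectors normalized by $f_0(1)=\tilde f_0(1)=1$. With the $K_v$-integral pairing one has $\langle f_0,\tilde f_0\rangle=1$, so under this normalization
\begin{equation*}
\Phi(g)=\int_{K_v}f_0(kg)\,dk.
\end{equation*}
Since $f_0$ is right-$K_v$-invariant, $\Phi$ is bi-$K_v$-invariant; together with the central-character transformation law $\Phi(zg)=\chi_1\chi_2(z)\Phi(g)$, the Cartan decomposition of $\GL_2(\F_v)$ reduces the computation to the values $A_n:=\Phi(t^n)$ for $n\geq 0$.

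Next I would establish a linear recursion for $A_n$ using the Hecke operator $\mu_\mathfrak{l}^*$. A short computation on $f_0$ at the identity, using the right-coset decomposition
\begin{equation*}
K_v t K_v=\bigsqcup_{u\in O_v/\varpi_v O_v}\zxz{\varpi_v}{u}{0}{1}K_v\ \sqcup\ \zxz{1}{0}{0}{\varpi_v}K_v,
\end{equation*}
shows that the eigenvalue of $\mu_\mathfrak{l}^*$ on $f_0$ equals $\lambda=q^{1/2}(z_1+z_2)$. Evaluating the same decomposition at $g=t^n$: for each $u$ the matrix $t^n\zxz{\varpi_v}{u}{0}{1}$ has Smith normal form $\mathrm{diag}(1,\varpi_v^{n+1})$, hence lies in $K_v t^{n+1} K_v$; while $t^n\zxz{1}{0}{0}{\varpi_v}$ has Smith form $\mathrm{diag}(\varpi_v,\varpi_v^n)$, so by the central-character transformation its $\Phi$-value equals $z_1 z_2\cdot A_{n-1}$. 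This produces
\begin{equation*}
qA_{n+1}+z_1z_2A_{n-1}=q^{1/2}(z_1+z_2)A_n\qquad(n\geq 1).
\end{equation*}
The characteristic polynomial $qr^2-q^{1/2}(z_1+z_2)r+z_1z_2$ factors with roots $r_i=z_iq^{-1/2}$, so $A_n=C_1r_1^n+C_2r_2^n$ for constants $C_i$ independent of $n$.

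Two initial values then pin down $C_1,C_2$. Trivially $A_0=1$. To compute $A_1=\Phi(t)$ directly, I would split the integral over $K_v$ according to whether $k_{22}\in O_v^*$ (a set of measure $q/(q+1)$) or $k_{22}\in \varpi_v O_v$ (measure $1/(q+1)$); putting $kt$ in Iwasawa form in each case shows that $f_0(kt)$ equals $z_1q^{-1/2}$ and $z_2q^{1/2}$ respectively, yielding
\begin{equation*}
A_1=\frac{q}{q+1}\cdot z_1q^{-1/2}+\frac{1}{q+1}\cdot z_2q^{1/2}=\frac{q^{1/2}(z_1+z_2)}{q+1}.
\end{equation*}
Solving the $2\times 2$ system for $(C_1,C_2)$ in terms of $(A_0,A_1)$ and simplifying then rewrites $A_n=q^{-n/2}(C_1z_1^n+C_2z_2^n)$ in the claimed closed form (the factor $(q+1)^{-1}=q^{-1}/(1+q^{-1})$ producing the denominator $1+q^{-1}$).

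The only step requiring care is the direct evaluation of $A_1$, i.e.\ the piecewise Iwasawa analysis of $kt$ over $K_v$; the remaining steps are a routine linear recursion together with elementary algebra, so I do not expect a serious obstacle — this is essentially the classical Macdonald formula for $\GL_2$, and the case $z_1=z_2$ is handled in the standard way by taking a limit.
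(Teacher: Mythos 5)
Your argument is correct. The paper does not actually prove this lemma — it simply cites Bump's book for the classical Macdonald formula — so there is no internal proof to compare against; what you have supplied is a self-contained derivation. The Hecke-recursion approach you use (bi-$K_v$-invariance plus the eigenvalue relation $\Phi*\mu_{\mathfrak{l}}^* = \lambda\Phi$, Cartan-decomposition bookkeeping for $t^n\cdot(\text{coset reps})$ to get $qA_{n+1}+z_1z_2A_{n-1}=q^{1/2}(z_1+z_2)A_n$, then solving the two-term recursion) is a standard and valid route; I checked the coset analysis, the eigenvalue $q^{1/2}(z_1+z_2)$, the Iwasawa evaluation giving $A_1=\tfrac{q^{1/2}(z_1+z_2)}{q+1}$, and the final algebra matching the stated closed form after rewriting $\tfrac{1}{1+q^{-1}}=\tfrac{q}{q+1}$. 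One small remark: the direct Iwasawa computation of $A_1$ is actually redundant. Since $\left(\begin{smallmatrix}1&0\\0&\varpi_v\end{smallmatrix}\right)$ lies in $K_v t K_v$, the $n=0$ instance of your coset bookkeeping reads $qA_1+A_1=\lambda A_0$, which already yields $A_1=\tfrac{q^{1/2}(z_1+z_2)}{q+1}$ with no separate case analysis over $K_v$. Your handling of the confluent case $z_1=z_2$ by continuity is fine, since both sides of the formula extend analytically across the diagonal.
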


Now we state the result for the bound of local matrix coefficient for general elements. (See for example,  .)
\begin{lem}\label{localboundmatrixcoeff}
 Let $\pi_v$ be the local component of an unitary cuspidal automorhpic representation of $\GL_2$ at a finite place $v$. Let $f_1$, $f_2$ be two $K_v-$finite elements in $\pi_v$, stabilized respectively by compact open subgroups $K_{1,v}$ and $K_{2,v}$. 
 Then for any
 $x\in\F_v$ and $\epsilon>0$, 
 \begin{equation}
  |<\pi(\zxz{x}{0}{0}{1})f_1,f_2>|\ll_{\epsilon,\F}[K_v:K_{1,v}]^{1/2}[K_v:K_{2,v}]^{1/2}q^{(\alpha-1/2+\epsilon)|v(x)|}||f_1||_v||f_2||_v.
 \end{equation}
 \end{lem}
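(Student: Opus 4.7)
The plan is to realize $\pi_v$ in its Kirillov model, in which $a(x)=\zxz{x}{0}{0}{1}$ acts by translation $\xi(y)\mapsto \xi(yx)$, and to extract the decay rate from the invariant inner product together with the Ramanujan bound $\alpha$. The index factors $[K_v:K_{i,v}]^{1/2}$ will be obtained by controlling the $K_v$-span of $f_i$ and converting $L^\infty$-norms to $L^2$-norms on the Kirillov side.

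First, I would realize $\pi_v \hookrightarrow C^\infty(\F_v^*)$, so that $\|f_i\|_v$ equals the norm of the corresponding Kirillov function $\xi_i$ in the canonical invariant inner product on $\pi_v$. Up to a Plancherel weight $\mu_{\pi_v}$ on $\F_v^*$—essentially trivial in the tempered cases, and of the form $|y|^{-2\tau}$ near $0$ with $\tau\le\alpha$ in the complementary-series case—the matrix coefficient takes the shape
\begin{equation}
\langle \pi_v(a(\varpi_v^n))f_1,f_2\rangle = \int_{\F_v^*}\xi_1(y)\,\overline{\xi_2(y\varpi_v^{-n})}\,\mu_{\pi_v}(y)\,d^*y,
\end{equation}
where $n=v(x)$. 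This is the only place the Ramanujan exponent enters: through the weight $\mu_{\pi_v}$.

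Next, I would bound this integral case by case along the local classification. In the unramified case Lemma \ref{lemofMCforUNram} yields the required bound for the spherical contribution: the overall factor $q^{-n/2}$ combined with $|\chi_j(\varpi_v)|\le q^\alpha$ produces the exponent $\alpha-1/2+\epsilon$, once a polynomial-in-$n$ factor is absorbed into the $\epsilon$. In the ramified principal-series and special cases, the Kirillov function of the newform has explicit support and shape, and Cauchy--Schwarz in the displayed integral, combined with $\|\xi_i\|_{L^2(\mu_{\pi_v})}=\|f_i\|_v$, gives the same bound: the overlap of the supports of $\xi_1$ and $\xi_2(\cdot\,\varpi_v^{-n})$ contributes a factor $q^{-|n|/2}$, and the weight $\mu_{\pi_v}$ contributes at most $q^{\alpha|n|}$. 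In the supercuspidal case $\xi_i$ is compactly supported in $\F_v^*$, so the overlap vanishes once $|n|$ exceeds a constant depending on $\pi_v$.

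The main technical obstacle is to make these estimates uniform over the finite-dimensional $K_v$-span of each $f_i$ and to recover the $[K_v:K_{i,v}]^{1/2}$ factors cleanly. One decomposes $K_v$ into cosets of $K_1(\varpi_v^c)$ by Lemma \ref{Iwasawadecomp} and expands $f_i$ in an orthonormal basis of its $K_v$-span $V_i$, whose dimension is bounded by $[K_v:K_{i,v}]$; two applications of Cauchy--Schwarz produce the factor $(\dim V_1 \dim V_2)^{1/2}$, while the individual matrix coefficients arising in the expansion are bounded by the case analysis of the previous step (the $L^\infty$-norms of the translated Kirillov functions $\pi_v(k)\xi_i$ stay controlled by their $L^2(\mu_{\pi_v})$-norms on each coset). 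Combining these ingredients yields the lemma, with the implicit constant depending on $\F$ only through the residue characteristic of $v$ and absolute constants.
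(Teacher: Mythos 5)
Your proposal takes a genuinely different route from the paper: the paper cites Lemma 9.1 of \cite{AV10}, sketches the spherical case via Lemma \ref{lemofMCforUNram}, and invokes the Cowling--Haagerup--Howe (CHH) argument of \cite{CHH88} to pass from spherical to general $K_v$-finite vectors; you instead work entirely in the Kirillov model and try to get both the decay and the index factors from a direct Cauchy--Schwarz scheme. Unfortunately there are two genuine gaps.

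First, the ``Plancherel weight'' $\mu_{\pi_v}$ is a misconception. For every generic unitary irreducible of $\GL_2(\F_v)$, including complementary series, the $G$-invariant pairing on the Kirillov model is (up to a scalar) the plain $L^2(\F_v^*,d^*y)$ pairing $\int \xi_1\overline{\xi_2}\,d^*y$; by uniqueness of the Whittaker model there is no nontrivial $G$-invariant weight. The Ramanujan exponent $\alpha$ does not enter through a weight; it enters through the rate of decay of the newvector Kirillov function $\xi(y)$ as $y\to 0$, which in the non-tempered case is $|y|^{1/2-\tau}$ rather than $|y|^{1/2}$. Because of this, the assertion that ``Cauchy--Schwarz in the displayed integral \dots gives the same bound'' is not correct: Cauchy--Schwarz applied to $\int \xi_1(y\varpi^n)\overline{\xi_2(y)}d^*y$ only yields $\|\xi_1\|\,\|\xi_2\|$ with no dependence on $n$ at all. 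The exponent $q^{(\alpha-1/2)|n|}$ comes from an explicit computation with the known shape of the Kirillov functions (as you correctly do in the unramified case via Lemma \ref{lemofMCforUNram}), not from Cauchy--Schwarz plus an ``overlap'' heuristic.

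Second, and more seriously, the step producing the index factors does not reduce the problem. If you expand $f_i$ in an orthonormal basis $\{e_j\}$ of its $K_v$-span, the individual matrix coefficients $\langle\pi(g)e_j,g_k\rangle$ are precisely of the type the lemma asserts a bound for; they are \emph{not} newvector matrix coefficients, so the ``case analysis of the previous step'' does not apply to them, and the parenthetical claim that their $L^\infty$-norms are controlled by $L^2$-norms ``on each coset'' is neither a standard fact nor a substitute for such a bound. This is exactly the point where the paper's cited CHH trick does real work: it bounds matrix coefficients of arbitrary $K_v$-finite vectors of a tempered representation by (dimension factors times) the Harish--Chandra spherical function, rather than by expanding in a basis and re-invoking the unproved inequality. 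As written, your reduction step is circular. A correct completion of your Kirillov-model approach would either reproduce CHH in this language, or show directly that for any $K_1(\varpi^c)$-fixed vector $\xi$ one has $|\xi(y)|\ll [K_v:K_1(\varpi^c)]^{1/2}|y|^{1/2-\alpha-\epsilon}\|\xi\|$ on the support, and then integrate; the latter bound itself requires an argument.

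A smaller remark: the $\epsilon$-loss in the exponent absorbs a polynomial-in-$n$ factor (e.g.\ the $(n+1)$ in the paper's spherical computation), and for a global application one must control the implied constant uniformly in $v$; this is the content of Remark \ref{remoftakingproductofbound} and should be addressed explicitly, which your sketch does not do.
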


\begin{proof}
 It follows from, for example, Lemma 9.1 of \cite{AV10}. Here we briefly describe how to prove this result for induced representations at non-archimedean places. For spherical elements, one can use Lemma \ref{lemofMCforUNram} above to check the inequality directly. More specifically if $|v(x)|=n$ and $f_i$'s are spherical, then
 \begin{align}
 |<\pi(\zxz{x}{0}{0}{1})f_1,f_2>|&=|\frac{q^{-n/2}}{1+q^{-1}}\frac{\chi_1^n(\chi_1-\chi_2q^{-1})-\chi_2^n(\chi_2-\chi_1q^{-1})}{\chi_1-\chi_2}|\\
 &=|\frac{q^{-n/2}}{1+q^{-1}}((\chi_1^n+\chi_1^{n-1}\chi_2+\cdots+\chi_2^{n}) -q^{-1}\chi_1\chi_2(\chi_1^{n-2}+\chi_1^{n-3}\chi_2+\cdots+\chi_2^{n-2}))|\notag\\
 &\leq (n+1)q^{(\alpha-1/2)n}.\notag
 \end{align}
 The coefficient $(n+1)$ will be essentially bounded by $q^{\epsilon n}$ for any $\epsilon> 0$, and the implicit constant can be taken to be 1 when $q$ is large enough.
 When $f_1$, $f_2$ are not spherical, one can use the trick as in \cite{CHH88} to reduce the inequality to the spherical case.
\end{proof}
 
\begin{rem}\label{remoftakingproductofbound}
  This proof actually allow one to control the implicit constant. In particular one can take a product of the local inequality and get a global inequality.
\end{rem}

Now we give a bound for the global matrix coefficient. Let $\D$ be a global quaternion algebra. Let $\rho$ denote the right regular representation of $\D^*(\A)$ on $L^2(Z_{\A}\D^*(\F)\backslash \D^*(\A))$. 
Let $F_1$, $F_2\in L^2(Z_{\A}\D^*(\F)\backslash \D^*(\A))$ be two rapidly decreasing and $K-$finite automorphic forms which don't have 1-dim components in their spectrum decomposition.
 Let $S$ be a finite set of non-archimedean places. We assume that $\D$ is locally the matrix algebra at the places in $S$.  
 Let $K_S=\prod_{v\in S}K_v$ and $K_{i,S}=\prod_{v\in S}K_{i,v}$, where $K_{i,v}$ stabilizes the local component of $F_i$ at $v$. Let $\mathcal{N}=\prod_v\varpi_v^{e_v}$ for $e_v\geq 0$, and $N=\text{Nm}(\mathcal{N})$. Define the matrix 
 $$a([\mathcal{N}])=\prod_v\zxz{\varpi^{-e_v}}{0}{0}{1},$$
 which can be naturally thought of as an element of $\D^*(\A)$.
\begin{prop}\label{propofboundglobalMC}
With the setting as above, we have
 \begin{equation}
  |\int\limits_{Z_{\A}\D^*(\F)\backslash \D^*(\A)}F_1(g)\rho(a([\mathcal{N}]))F_2(g)dg|\ll_{\epsilon,\F}[K_S:K_{1,S}]^{1/2}[K_S:K_{2,S}]^{1/2}N^{\alpha-1/2+\epsilon}||F_1||_{L^2}||F_2||_{L^2}.
 \end{equation}

\end{prop}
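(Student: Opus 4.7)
The plan is to reduce the global matrix coefficient to a product of local ones via spectral decomposition, apply Lemma \ref{localboundmatrixcoeff} at every place touched by $\mathcal{N}$ or by $S$, and use unitarity plus Cauchy--Schwarz at all remaining places.

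First I would spectrally decompose $F_i = \sum_\pi F_i^\pi + (\text{Eisenstein continuum})$ in $L^2(Z_\A \D^*(\F) \backslash \D^*(\A))$. Since by assumption neither $F_i$ has a $1$-dimensional constituent, every summand lives in a generic infinite-dimensional automorphic representation, which is locally tempered up to the bound $\alpha$ towards Ramanujan. Distinct irreducibles are orthogonal, and on each $\pi$ the bilinear pairing is Eulerian, so
\begin{equation*}
\int F_1(g)\rho(a([\mathcal{N}]))F_2(g)dg = \sum_\pi <\pi(a([\mathcal{N}])) f_1^\pi, f_2^{\check\pi}> + (\text{Eis.}),
\end{equation*}
with each spectral term factoring as $\prod_v <\pi_v(a_v) f_{1,v}^\pi, f_{2,v}^{\check\pi}>$.

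Next, at every place $v \in S$ (and at any place dividing $\mathcal{N}$) I would invoke Lemma \ref{localboundmatrixcoeff} to bound the local factor by
\begin{equation*}
[K_v:K_{1,v}]^{1/2}[K_v:K_{2,v}]^{1/2}\, q_v^{(\alpha-1/2+\epsilon)e_v}\, ||f_{1,v}||_v\, ||f_{2,v}||_v.
\end{equation*}
Taking the product over $v\in S$ produces precisely $[K_S:K_{1,S}]^{1/2}[K_S:K_{2,S}]^{1/2}N^{\alpha-1/2+\epsilon}$, since $e_v = 0$ and $K_{i,v}=K_v$ outside $S$. At the remaining places the matrix $a_v$ is trivial and unitarity gives $<\pi_v(1)f_{1,v},f_{2,v}>=<f_{1,v},f_{2,v}>$, bounded by the product of local norms. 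Multiplying over all places and then applying Cauchy--Schwarz to the spectral sum converts $\prod_v ||f_{i,v}||_v$ into $||F_1^\pi||_{L^2}$, $||F_2^\pi||_{L^2}$, and the Bessel inequality collapses the sum over $\pi$ into $||F_1||_{L^2}||F_2||_{L^2}$, using that the spectral decomposition is an isometry.

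The main obstacle is that a $K$-finite automorphic form need not be a pure tensor in its $\pi$-isotypic piece, so the factorization step has to be performed on a basis of pure tensors whose length is controlled by the $K$-types stabilizing $F_i$; this is where the CHH88-style argument already invoked in Lemma \ref{localboundmatrixcoeff} is used, with careful tracking of uniform implicit constants via Remark \ref{remoftakingproductofbound} to ensure the product over infinitely many places is convergent. A secondary technical point is the Eisenstein continuous spectrum: one must invoke Langlands' spectral decomposition and verify that the rapid decrease and $K$-finiteness of the $F_i$ produce enough decay along the continuous parameter for the pointwise bound above to integrate to the stated $L^2$-norms.
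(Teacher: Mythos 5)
Your overall strategy matches the paper's: spectrally decompose $F_i$, factor each isotypic piece into local matrix coefficients, apply Lemma~\ref{localboundmatrixcoeff} place by place, multiply, and finish with Cauchy--Schwarz and Plancherel. However, you misattribute the resolution of the ``main obstacle'' you identify, and in doing so leave the key step of the argument unaddressed.

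The CHH88 trick is \emph{purely local}; it lives entirely inside Lemma~\ref{localboundmatrixcoeff}, where it reduces the bound for non-spherical test vectors in a single local representation to the spherical case. It has nothing to do with expanding a non-pure-tensor automorphic form in a basis. What actually makes the factorization go through in the paper is a $K_i$-invariance statement for the isotypic projections: for the cuspidal part, the $\pi$-component of a $K_i$-invariant $F_i$ is again $K_i$-invariant (an immediate consequence of the Plancherel theorem, since $\rho(k)$ commutes with the orthogonal projection onto $\pi$), so one can choose a basis of pure tensors fixed by $K_i$ and the local factors $[K_v:K_{i,v}]$ appear with exactly the right exponent. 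For the Eisenstein part the corresponding statement is \emph{not} immediate and the paper proves a dedicated lemma (in the appendix): for any cuspidal datum $\chi$ attached to the torus, $\sum_{\varphi\in\mathcal{B}(\chi)}\langle F_i, E_{\chi,\varphi}\rangle\varphi$ is $K_i$-invariant in $\mathcal{I}(\chi)$. This lemma is what controls the $[K_S:K_{i,S}]^{1/2}$ constants uniformly in $\chi$; your proposal never supplies it.

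A second point: your decomposition ``$=\sum_\pi\langle\cdots\rangle + (\text{Eis.})$'' glosses over the fact that Eisenstein series are not in $L^2$, so the continuous part does not carry the restriction of the automorphic inner product. The paper works instead with the unitary Eisenstein pairing $\langle\cdot,\cdot\rangle_{\text{Eis}}$ on $\mathcal{I}(\chi)$ --- manifestly a product of local integrals by (\ref{formulaofglobalEispairing}) --- and with the explicit Plancherel formula (\ref{formulaofplancherel}) indexed by the cuspidal datum $\chi$. One then uses $\rho(a([\mathcal{N}])^{-1})E_{\chi,\varphi}=E_{\chi,\rho(a([\mathcal{N}])^{-1})\varphi}$ to push the translation onto the induced model, bounds the resulting inner products in $\mathcal{I}(\chi)$ locally, and only at the very end integrates over $\chi$ via Cauchy--Schwarz. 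Your ``verify enough decay along the continuous parameter'' should be replaced by this concrete mechanism; with it, the argument closes cleanly because the local components of $\mathcal{I}(\chi)$ are tempered so $\alpha$ plays no role on the continuous spectrum.
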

We will prove this proposition in the appendix.
Now the question is, for any given cusp forms $F$, how can we separate out the 1-dimensional components. Suppose  that in general the center acts on $F$ by a unitary central character $w$. Then its 1-dimensional components can be given as the following projection:
\begin{equation}
F\mapsto  \mathcal{P}F(x)=\sum_{\chi^2=w}\chi(x)\int_{X}f(y)\overline{\chi(y)}dy.
\end{equation}
Then the remaining part $F-\mathcal{P}F$ doesn't have any 1-dimensional components.

\subsection{Whittaker model for induced representations} \label{SecprepofWhittaker}
Here we recall some basic results about the Whittaker model for induced representations.  This and next subsections are purely local, so we will suppress the subscript $v$ for all notations.

Fix an additive character $\psi$. Without loss of generality, we will always assume $\psi$ is unramified.
Let $\pi$ be a local irreducible (generic) representation of $G$. Then there is a unique realization of $\pi$ in the space of functions $W$ on $G$ such that
\begin{equation}
W(\zxz{1}{n}{0}{1}g)=\psi(n)W(g).
\end{equation}
Locally for an induced representation of $\GL_2$, one can compute its Whittaker functional by the following formula:
\begin{equation}\label{computeWhit}
W (g)=\int\limits_{m\in \F }\varphi(\omega\zxz{1}{m}{0}{1}g)\psi(m)dm,
\end{equation}
where $\varphi$ is an element of $\pi$ in the model of induced representation and $\omega$ is the matrix $\zxz{0}{1}{-1}{0}$.

When $\pi$ is unitary, one can define a unitary pairing on $\pi$ using the Whittaker model:
\begin{equation}
<W_1,W_2>=\int_{\F^*}W_1(\zxz{\alpha}{0}{0}{1})\overline{W_2(\zxz{\alpha}{0}{0}{1})}d^*\alpha.
\end{equation}

To get the Whittaker functional explicitly using (\ref{computeWhit}),  the first step is to write $$\omega\zxz{1}{m}{0}{1}\zxz{\alpha}{0}{0}{1}\zxz{1}{0}{\varpi^i}{1}=\zxz{\varpi^i}{1}{-\alpha-m\varpi^i}{-m}$$ 
in form of $B(\F )\zxz{1}{0}{\varpi^k}{1}K_1(\varpi^c)$ for $0\leq i,k \leq c$. Note that if $k=c$, then $\zxz{1}{0}{\varpi^k}{1}$ is absorbed into $K_1(\varpi^c)$. Same for $i$.

We record the following results about from \cite{YH13}.
\begin{lem}\label{LevelIwaDec}
\begin{enumerate}
\item[(1)]Suppose $k=0$.
\begin{enumerate}
\item[(1i)]If $i=0$, we need $m\notin \alpha(-1+\varpi O_F)$ for  $\zxz{\varpi^i}{1}{-\alpha-m\varpi^i}{-m}\in B\zxz{1}{0}{\varpi^k}{1}K_1(\varpi^c)$;
\item[(1ii)]If $i>0$, we need $v(m)\geq v(\alpha)$. 

\end{enumerate}
Under above conditions we can write $\zxz{\varpi^i}{1}{-\alpha-m\varpi^i}{-m}$ as
\begin{equation*}
\zxz{-\frac{\alpha}{\alpha+m\varpi^i}}{\varpi^i+\frac{\alpha}{\alpha+m\varpi^i}}{0}{-\alpha-m\varpi^i}\zxz{1}{0}{1}{1}\zxz{1}{-1+\frac{m}{\alpha+m\varpi^i}}{0}{1}.
\end{equation*}
\item[(2)]Suppose $k=c$.
\begin{enumerate}
\item[(2i)]If $i<c$, we need $m\in \alpha\varpi^{-i}(-1+\varpi^{c-i}O_F)$;
\item[(2ii)]If $i=c$, we need $v(m)\leq v(\alpha)-c$.
\end{enumerate}
Under above conditions, we can write $\zxz{\varpi^i}{1}{-\alpha-m\varpi^i}{-m}$ as
\begin{equation*}
\zxz{-\frac{\alpha}{m}}{1}{0}{-m}\zxz{1}{0}{\frac{\alpha}{m}+\varpi^i}{1}.
\end{equation*}

\item[(3)]Suppose $0<k<c$.
\begin{enumerate}
\item[(3i)]If $i<k$, we need $m\in \alpha\varpi^{-i}(-1+\varpi^{k-i}O_F^*)$;
\item[(3ii)]If $i>k$, we need $v(m)=v(\alpha)-k$;
\item[(3iii)]If $i=k$, we need $v(m)\leq v(\alpha)-k$ but $m\notin \alpha\varpi^{-k}(-1+\varpi O_F)$. 
\end{enumerate}
Under above conditions we can write $\zxz{\varpi^i}{1}{-\alpha-m\varpi^i}{-m}$ as
\begin{equation*}
\zxz{-\frac{\alpha\varpi^k}{\alpha+m\varpi^i}}{1}{0}{-m}\zxz{1}{0}{\varpi^k}{1}\zxz{\frac{\alpha+m\varpi^i}{m\varpi^k}}{0}{0}{1}.
\end{equation*}
\end{enumerate}
\end{lem}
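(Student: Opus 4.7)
The plan is to verify the lemma by direct case-by-case matrix multiplication, leveraging the uniqueness of the coset decomposition from Lemma \ref{Iwasawadecomp}. The overall strategy, for each of the three cases $k=0$, $k=c$, and $0<k<c$, is to multiply out the claimed factorization and check (a) the matrix identity holds, and (b) the rightmost factor lies in $K_1(\varpi^c)$ precisely under the stated conditions on $m$ and $\alpha$.

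First I would carry out the matrix multiplications. For case (1), setting $\beta = \alpha + m\varpi^i$, a direct computation gives
\[
\zxz{-\alpha/\beta}{\varpi^i + \alpha/\beta}{0}{-\beta} \zxz{1}{0}{1}{1} \zxz{1}{-1 + m/\beta}{0}{1} = \zxz{\varpi^i}{1}{-\beta}{-m},
\]
which matches the target matrix $M_i := \zxz{\varpi^i}{1}{-\alpha - m\varpi^i}{-m}$. Completely analogous calculations handle (2) and (3), with the Borel and unipotent factors tuned to absorb the entries of $M_i$ on either side of the middle element $N_k = \zxz{1}{0}{\varpi^k}{1}$.

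Next I would verify that the rightmost factor lies in $K_1(\varpi^c)$. In every case this factor is upper unipotent (perhaps after an obvious rescaling), so the congruence conditions (bottom-left $\equiv 0$, bottom-right $\equiv 1 \pmod{\varpi^c}$) are automatic; what remains is integrality of the off-diagonal entry. For (1i) that entry simplifies to $-\alpha/(\alpha+m)$, whose integrality is equivalent to $v(\alpha+m) \le v(\alpha)$, i.e.\ $m \notin \alpha(-1 + \varpi O_F)$. For (2) one instead rewrites
\[
\zxz{\varpi^i}{1}{-\alpha-m\varpi^i}{-m} = \zxz{-\alpha/m}{1}{0}{-m} \zxz{1}{0}{\alpha/m + \varpi^i}{1},
\]
so the requirement that the rightmost factor lie in $K_1(\varpi^c)$ becomes $v(\alpha/m + \varpi^i) \ge c$, which translates to $m \in \alpha\varpi^{-i}(-1 + \varpi^{c-i}O_F)$ when $i < c$, or $v(m) \le v(\alpha) - c$ when $i = c$. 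Case (3) is analogous, with the exponent $k$ of the middle factor encoding the exact valuation $v(\alpha + m\varpi^i) - v(m)$.

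As a consistency check I would verify that the stated conditions partition the space of admissible $m$ for each fixed $i,\alpha$, matching the uniqueness guaranteed by Lemma \ref{Iwasawadecomp}. The main obstacle is the bookkeeping around the boundary cases: distinguishing $\varpi^{k-i}O_F^*$ (exact valuation) in (3i) from $\varpi^{c-i}O_F$ (valuation $\geq c-i$) in (2i), and carefully handling the leftover subcase (3iii) where $i = k$. These delicate boundary conditions are precisely what force the rightmost factor to land in $K_1(\varpi^c)$ rather than a strictly deeper congruence subgroup, which in turn singles out the correct $k$ in the coset decomposition. Once this bookkeeping is organized, each subcase reduces to a routine valuation computation.
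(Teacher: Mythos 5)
Your proposal is correct and matches the paper's approach exactly — the paper's proof is simply ``direct to check,'' which is precisely the case-by-case matrix multiplication and valuation bookkeeping you describe. One small imprecision: the rightmost factor is not upper unipotent in every case (in (2) it is lower unipotent, so the constraint is the genuine congruence $v(\alpha/m+\varpi^i)\geq c$ rather than mere integrality, and in (3) it is diagonal, with the constraint being that $\frac{\alpha+m\varpi^i}{m\varpi^k}$ is a unit), but you handle those subcases correctly in the detailed discussion that follows.
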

\begin{proof}
Direct to check.
\end{proof}

Now let $\pi$ be a unitary induced representation $\pi(\mu_1,\mu_2)$, where $\mu_1$ and $\mu_2$ are both ramified of level $k_1$ and $k_2$. Let $c=k_1+k_2$ be the level of $\pi$. Then by the classical results, there exists a newform in the model of induced representation, which is right $K_1(\varpi^c)-$invariant and supported on
$$B\zxz{1}{0}{\varpi^{k_2}}{1}K_1(\varpi^c),$$
where $B$ is the Borel subgroup.

We shall consider the Whittaker function $W$ associated to this newform.
Let 
\begin{equation}
C=\int\limits_{u\in O_F^*} \mu_1(-\varpi^{k_2})\mu_2(-\varpi^{-k_2} u)\psi(-\varpi^{-k_2} u)du.
\end{equation}
We denote the normalized Whittaker value $W (\zxz{\alpha}{0}{0}{1}\zxz{1}{0}{\varpi^i}{1})$ by $W^{(i)}(\alpha)$ for short. Then the next lemma follows directly from (\ref{computeWhit}) and (3) of the above lemma.
\begin{lem}\label{Wiofram}

\begin{enumerate}
\item[(i)]If $i<k_2$, then 
\begin{equation}
W^{(i)}(\alpha)=C^{-1}\int\limits_{u\in O_F^*}\mu_1(-\frac{\varpi^i}{u})\mu_2(\alpha\varpi^{-i}(1-\varpi^{k_2-i}u))q^{v(\alpha)/2-i}\psi(\alpha\varpi^{-i}(1-\varpi^{k_2-i}u))q^{2i-k_2-v(\alpha)}du.
\end{equation}
Its integral against 1 is always 0. 

\item[(ii)]If $k_2<i\leq c$, then 
\begin{equation}
W^{(i)}(\alpha)=C^{-1}\int\limits_{u\in O_F^*} \mu_1(-\frac{\varpi^{k_2}}{1+u\varpi^{i-k_2}})\mu_2(-\varpi^{-k_2}\alpha u)q^{-v(\alpha)/2}\psi(-\varpi^{-k_2}\alpha u)du.
\end{equation}
In particular 
\begin{equation}
W^{(c)}(\alpha)=\begin{cases}
1,&\text{\ if\ }v(\alpha)=0;\\
0,&\text{\ otherwise.}
\end{cases}.
\end{equation}
When $i<c$,
\begin{equation}\label{RamWint1}
\text{\ \ \ }\int\limits_{v(\alpha) \text{fixed}}W^{(i)}(\alpha)d^*\alpha=\begin{cases}
-\frac{1}{q-1},&\text{\ \ if\ }i=c-1>k_2\text{\ and\ }v(\alpha)=0;\\
0,&\text{\ \ otherwise}.
\end{cases}
\end{equation}
\item[(iii)]If $i=k_2$, 
\begin{equation}
W^{(k_2)}=C^{-1}\int\limits_{v(u)\leq -k_2,u\notin \varpi^{-k_2}(-1+\varpi O_F)}\mu_1(-\frac{\varpi^{k_2}}{1+u\varpi^{k_2}})\mu_2(-\alpha u)|\frac{\varpi^{k_2}}{\alpha u(1+u\varpi^{k_2})}|^{1/2}\psi(-\alpha u)q^{-v(\alpha)}du.
\end{equation}
The integral of $W^{(k_2)}$ against 1 is always zero if either $k_2>1$ or $v(\alpha)\neq 0$. When $k_2=1$ and $v(\alpha)=0$, its integral against 1 is the same as expected from (2) as the limit case.
\end{enumerate}
\end{lem}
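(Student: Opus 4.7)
The proof begins from formula (\ref{computeWhit}) applied to the newform $\varphi\in\pi$ (supported on $B\zxz{1}{0}{\varpi^{k_2}}{1}K_1(\varpi^c)$):
$$
W^{(i)}(\alpha)=\int_{\F}\varphi\!\left(\zxz{\varpi^i}{1}{-\alpha-m\varpi^i}{-m}\right)\psi(m)\,dm.
$$
Since $0<k_2<c$ (both $\mu_1,\mu_2$ are ramified), only case (3) of Lemma \ref{LevelIwaDec} with $k=k_2$ contributes: the integrand is nonzero precisely when $m$ lies in the domain described there, and this naturally splits into the three subcases $i<k_2$, $i>k_2$, $i=k_2$ corresponding to (3i), (3ii), (3iii). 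In each subcase I would substitute the explicit Iwasawa factorization given by the lemma, use the transformation rule
$$
\varphi\!\left(\zxz{a}{*}{0}{d}\zxz{1}{0}{\varpi^{k_2}}{1}k\right)=\mu_1(a)\mu_2(d)\bigl|a/d\bigr|^{1/2}\varphi\!\left(\zxz{1}{0}{\varpi^{k_2}}{1}\right),\quad k\in K_1(\varpi^c),
$$
and then change variables from $m$ to a unit $u\in O_F^*$ according to: $m=\alpha\varpi^{-i}(-1+\varpi^{k_2-i}u)$ in (i), $m=-\varpi^{-k_2}\alpha u$ in (ii), and $m=\alpha u$ (with the modified constraint domain) in (iii). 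The normalization constant $C^{-1}$ absorbs the value $\varphi(\zxz{1}{0}{\varpi^{k_2}}{1})$; the precise form of $C$ given before the lemma is recovered by demanding $W^{(c)}(1)=1$. Direct bookkeeping of the powers of $q$ arising from $|a/d|^{1/2}$ and from the Jacobian $dm$ then yields the stated formulas.

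To evaluate $\int_{v(\alpha)\text{ fixed}}W^{(i)}(\alpha)\,d^*\alpha$, I would write $\alpha=\varpi^n\beta$ with $\beta\in O_F^*$ and swap the order of integration. After a further change of variable $\gamma=\beta\cdot c(u)$ inside (where $c(u)$ is the unit factor arising from the Iwasawa decomposition), the inner integral becomes a Gauss sum
$$
\int_{O_F^*}\mu_2(\gamma)\psi(\varpi^{n-k_2}\gamma)\,d^*\gamma,
$$
which vanishes unless the conductor of $\psi(\varpi^{n-k_2}\cdot)$ on $O_F^*$ matches that of $\mu_2$, i.e.\ unless $n=0$ in case (ii) (and a specific value in case (i)). In case (i) the outer $u$-integral decouples as $\int_{O_F^*}\mu_1^{-1}(u)\,du=0$ since $\mu_1$ is ramified, giving the vanishing claim. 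In case (ii), when the Gauss sum survives ($v(\alpha)=0$), the outer integral reduces to $\int_{O_F^*}\mu_1^{-1}(1+u\varpi^{i-k_2})\,d^*u$: for $i=c$ the integrand is the constant $\mu_1^{-1}(1)=1$, recovering the newform normalization; for $i=c-1$, setting $s=c-i=1$, the integrand depends only on $u\bmod\varpi$ and the sum $\sum_{a\in F_q^*}\mu_1^{-1}(1+a\varpi^{k_1-1})=-1$ yields the value $-1/(q-1)$; for $k_2<i<c-1$ (i.e. $s\geq 2$) the translation $u\mapsto u+\varpi^{s-1}t$ with $t\in O_F$ produces an inner sum $\int_{O_F}\mu_1^{-1}(1+t\varpi^{k_1-1})\,dt$, which equals the full additive-character sum over $F_q$ and hence vanishes.

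The main obstacle is case (iii) with $i=k_2$: the constraint region $\{v(m)\le v(\alpha)-k_2,\ m\notin\alpha\varpi^{-k_2}(-1+\varpi O_F)\}$ is not a single unit coset, so the change of variables is less clean and the resulting integrand depends on $\alpha u(1+u\varpi^{k_2})$ in a nontrivial way. I would show that stratifying by $v(u)$ and applying the same Gauss-sum/character-sum machinery kills the integral against $1$ whenever $k_2>1$ or $v(\alpha)\neq 0$. For $k_2=1$ and $v(\alpha)=0$ the excluded set $\varpi^{-1}(-1+\varpi O_F)$ shrinks to a single coset, the constraint on $u$ degenerates to $v(u)\leq-1$, and the formula agrees term-by-term with the $i=k_2+1=c-1$ case of (ii) in the appropriate limit. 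The careful matching of the normalization through $C$, together with verifying this boundary identification, is the most delicate part.
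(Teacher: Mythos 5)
The paper offers no detailed proof of this lemma --- it simply asserts that the result ``follows directly from (\ref{computeWhit}) and (3) of the above lemma'' --- and your argument supplies precisely that calculation: apply the Whittaker integral formula, invoke the Iwasawa factorization from Lemma \ref{LevelIwaDec}(3) with $k=k_2$ on the support $B\zxz{1}{0}{\varpi^{k_2}}{1}K_1(\varpi^c)$, use the transformation law of the induced representation, change variables to a unit parameter $u$, and evaluate the integrals against $1$ by the standard Gauss-sum and character-sum vanishing. This is the intended route and your bookkeeping (including the $s\geq 2$ translation argument for the vanishing when $k_2<i<c-1$ and the degeneration to a single coset when $k_2=1$) is sound, so the proposal matches the paper's approach.
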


We shall also consider the case when $\pi\cong\pi(\mu_1,\mu_2)$, where $\mu_1$ is unramified and $\mu_2$ is ramified of level $k$. Then the level of the representation $\pi$ is $k$. In this case the newform is right $K_1(\varpi^k)-$invariant and supported on $BK_1(\varpi^k)$.
Then by (2) of Lemma \ref{LevelIwaDec}, we have
\begin{lem}\label{Wiofbiasram}
\begin{enumerate}
\item[(1)]When $i=k$, 
\begin{align}
W^{(k)}(\alpha)&=\int\limits_{v(m)\leq v(\alpha)-k}\mu_1(-\frac{\alpha}{m})\mu_2(-m)\psi(-m)q^{-\frac{1}{2}v(\alpha)+v(m)}dm\\
&=\begin{cases}
q^{-\frac{1}{2}v(\alpha)}\mu_1^k(\varpi)q^{-k}\int\limits_{v(m)=-k}\mu_2(-m)\psi(-m)dm, &\text{\ if\ }v(\alpha)\geq 0,\\
0,& \text{\ otherwise.}
\end{cases}\notag
\end{align}
\item[(2)]When $i<k$, 
\begin{equation}
W^{(i)}(\alpha)=\mu_1^i(\varpi)\int\limits_{u\in O_\F}\mu_2(\alpha\varpi^{-i}(1-\varpi^{k-i}u))\psi(\alpha\varpi^{-i}(1-\varpi^{k-i}u))q^{-\frac{1}{2}v(\alpha)-k+i}du.
\end{equation}
\end{enumerate}

\end{lem}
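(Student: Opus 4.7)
The plan is to compute both parts by direct substitution into the Whittaker integral formula (\ref{computeWhit}), using the Iwasawa-type decompositions of Lemma \ref{LevelIwaDec}. Unlike the preceding Lemma \ref{Wiofram} (where both inducing characters are ramified), here the newform $\varphi$ is supported on the single double coset $BK_1(\varpi^k)$; accordingly only case (2) of Lemma \ref{LevelIwaDec} will enter, and the support conditions there will tell us exactly for which $m$ the integrand is nonzero.

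Applying (\ref{computeWhit}) to $g = \zxz{\alpha}{0}{0}{1}\zxz{1}{0}{\varpi^i}{1}$ produces the matrix $\zxz{\varpi^i}{1}{-\alpha-m\varpi^i}{-m}$ inside $\varphi$. Since $\varphi$ is supported on $BK_1(\varpi^k)$ and takes the induced value $\mu_1(b_{11})\mu_2(b_{22})|b_{11}/b_{22}|^{1/2}$ on the Borel part, I would substitute the decomposition from Lemma \ref{LevelIwaDec}(2i) or (2ii). Because $\mu_1$ is unramified, $\mu_1(-\alpha/m)$ collapses to $\mu_1(\varpi)^{v(\alpha)-v(m)}$ and factors out of the integrand. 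For part (2) with $i<k$, this essentially finishes the computation: parametrize by $m = \alpha\varpi^{-i}(-1+\varpi^{k-i}u)$ with $u \in O_F$, track the measure change $dm = |\alpha|\,q^{2i-k}\,du$, simplify $|b_{11}/b_{22}|^{1/2} = q^{v(\alpha)/2 - i}$, and collect the powers of $q$ to match the stated formula.

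For part (1) with $i=k$, the condition in Lemma \ref{LevelIwaDec}(2ii) is simply $v(m) \le v(\alpha) - k$, so the integrand now ranges over an open set in $\F_v$ rather than a single coset. I would stratify by $j = v(m)$, writing $m = \varpi^j u'$ with $u' \in O_F^*$, which reduces the inner integral to a Gauss sum
\[
\int_{O_F^*}\mu_2(-\varpi^j u')\,\psi(\varpi^j u')\,du'.
\]
Since $\mu_2$ has conductor $\varpi^k$ and $\psi$ is unramified, standard Gauss-sum theory kills every stratum except $j = -k$. Combining $j = -k$ with $j \le v(\alpha) - k$ then forces $v(\alpha) \ge 0$, so only the single stratum $v(m) = -k$ survives, producing the closed form claimed in (1).

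I expect the main technical nuisance to be bookkeeping rather than any serious obstacle: signs (the interplay between $\psi(m)\,dm$ and $\psi(-m)\,dm$ under $m \mapsto -m$), the normalization of the Haar measure on $\F_v$ versus $O_F^*$, and the allocation of $\mu_1(\varpi)^{v(\alpha)}$ between the unramified contribution and the central-character convention implicit in the Whittaker model. Once these conventions are pinned down, both formulae should drop out in a few lines from Lemma \ref{LevelIwaDec} and the elementary vanishing of a Gauss sum of an unramified additive character against a ramified multiplicative one.
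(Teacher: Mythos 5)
Your proposal is correct and takes essentially the same route the paper intends: the paper records this lemma as a direct consequence of Lemma \ref{LevelIwaDec}(2) with no written-out computation, and your parametrization by $u\in O_F$ for $i<k$ together with the stratification by $v(m)$ and Gauss-sum vanishing for $i=k$ are exactly the details being compressed. One wrinkle in the bookkeeping you anticipate is worth recording: the direct substitution in part (1) yields $\mu_1(\varpi)^{v(\alpha)+k}$ rather than the $\mu_1^k(\varpi)$ printed in the lemma; since $\mu_1$ is unramified and is forced to be unitary in this Type-3 situation (so $|\mu_1(\varpi)|=1$), the discrepancy disappears in the square-modulus that is all that gets used downstream.
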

\begin{rem}
In this lemma, the Whittaker functional is not normalized. But this turns out to be enough.
\end{rem}

\subsection{Kirillov model for supercuspidal representations}
Now let's consider supercuspidal representations. For the fixed additive character $\psi$, the Kirillov model of $\pi$ is a unique realization on $S(\F ^*)$ such that
\begin{equation}\label{Kirilmodel}
\pi(\zxz{a_1}{m}{0}{a_2})\varphi(x)=w_{\pi}(a_2)\psi(ma_2^{-1}x)\varphi(a_1a_2^{-1}x),
\end{equation}
where $w_{\pi}$ is the central character for $\pi$. Note if $\pi$ is not supercuspidal, one can still define its Kirillov model, but it's realized in $S(\F)$. Let $W_\varphi$ be the Whittaker function associated to $\varphi$. Then they are related by
$$\varphi(\alpha)=W_\varphi(\zxz{\alpha}{0}{0}{1}),$$
$$W_\varphi(g)=\pi(g)\varphi(1).$$
When $\pi$ is unitary, one can define the $G-$invariant unitary pairing on Kirillov model by
\begin{equation}
<f_1,f_2>=\int\limits_{\F ^*}f_1(x)\overline{f_2}(x)d^*x.
\end{equation}

By Bruhat decompostion, one just has to know the action of $\omega=\zxz{0}{1}{-1}{0}$ to understand the whole group action. 

Define $$\charf_{\nu,n}(x)=\begin{cases}
                        \nu(u), &\text{if\ } x=u\varpi^n\text{\  for\ } u\in O_F^*;\\
						0,&\text{otherwise}.
                       \end{cases}
 $$ Roughly speaking, it's the character $\nu$ supported at $v(x)=n$.
We can then describe the action of $\omega=\zxz{0}{1}{-1}{0}$ on $\charf_{\nu,n}$ explicitly according to \cite{JL70}:

\begin{equation}\label{singleaction}
\pi(\omega)\charf_{\nu,n}=C_{\nu w_0^{-1}}z_0^{-n}\charf_{\nu^{-1}w_0,-n+n_{\nu^{-1}}}.
\end{equation}
Here $z_0=w(\varpi)$ and $w_0=w_\pi|_{O_F^*}$. 
It's well-known that $n_{\nu}\leq -2$ for any $\nu$.
$-n_1$ is actually the level of this supercuspidal representation. Denote $c=-n_1$. 
The corresponding newform is simply $\charf_{1,0}$.

The relation $\omega^2=-\zxz{1}{0}{0}{1}$ implies 
\begin{equation}
n_{\nu}=n_{\nu^{-1}w_0^{-1}},\text{\ \ } C_\nu C_{\nu^{-1}w_0^{-1}}=w_0(-1)z_0^{n_\nu}.
\end{equation}

\begin{rem}\label{remrelatingscandlevel}
According to \cite{Yo77}, another way to formulate (\ref{singleaction}) is
 $$\pi(\omega)\charf_{\lambda_0,n}=\epsilon(\pi\otimes\lambda^{-1},\psi,1/2)\charf_{\lambda_0^{-1}w_0,-n-c(\pi\otimes\lambda^{-1})},$$
 where $\lambda$ is a character of $\F^*$ and $\lambda_0=\lambda|_{O_F^*}$. $c(\pi\otimes\lambda^{-1})$ is the level of $\pi\otimes\lambda^{-1}$. In particular, this implies $n_{\lambda_0}=-c(\pi\otimes\lambda^{-1})$. Also $C_\nu$'s are related to special values of local epsilon factors.
\end{rem}

It is proved in \cite[Proposition B.3]{YH13} that

\begin{prop}\label{Propofnmu}
Suppose that $c=-n_1\geq 2$ is the level of a supercuspidal representation $\pi$ whose central character is unramified or level 1. If $p\neq 2$ and $\nu$ is a level $i$ character, then we have $$n_\nu=\min\{-c,-2i\}.$$ 
When $p=2$ or the central character of $\pi$ is highly ramified, we have the same statement, except when $c\geq 4$ is an even integer and $i=c/2$. In that case, we only claim $n_\nu\geq -c$.
\end{prop}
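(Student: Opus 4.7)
By Remark \ref{remrelatingscandlevel}, the assertion $n_\nu=\min\{-c,-2i\}$ is equivalent to the conductor identity
\[
c(\pi\otimes\tilde\lambda)=\max\{c,2i\},
\]
where $\tilde\lambda$ is any character of $\F^*$ with $\tilde\lambda|_{O_F^*}=\nu$; the weakened exceptional claim becomes $c(\pi\otimes\tilde\lambda)\le c$. So my plan is to establish this conductor-of-twist formula by proving an unconditional upper bound together with a matching lower bound in all but one case, and then doing a finer analysis in the balanced case $2i=c$.

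For the upper bound $c(\pi\otimes\tilde\lambda)\le\max\{c,2i\}$ (equivalently $n_\nu\ge\min\{-c,-2i\}$), I construct an explicit $K_1(\varpi^{\max\{c,2i\}})$-fixed vector in $\pi\otimes\tilde\lambda$: twisting the newform of $\pi$ (in its Kirillov or Whittaker model) by $\tilde\lambda\circ\det$ and averaging over $(O_F/\varpi^{\max\{c,i\}})^*$ via the Iwasawa decomposition of Lemma \ref{LevelIwaDec} kills the extra $\tilde\lambda(\det)$-transformation under $K_1$. For the lower bound when $2i<c$, applying this upper bound to the inverse twist yields $c=c(\pi)\le\max\{c(\pi\otimes\tilde\lambda),2i\}$, forcing $c(\pi\otimes\tilde\lambda)\ge c$ since $2i<c$. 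For $2i>c$, this duality is vacuous; instead I compute $n_\nu$ directly in the Kirillov model. The candidate newform of $\pi\otimes\tilde\lambda$ viewed inside $\pi$'s Kirillov model is, up to scalar, $\charf_{\nu^{-1},0}$, and by (\ref{singleaction}) its image under $\pi(\omega)$ is supported at valuation $n_\nu$; a Gauss-sum evaluation of the intertwining constant $C_{\nu^{-1}w_0^{-1}}$, nonvanishing precisely because $c(w_\pi)\le 1<i$ under our hypothesis, pins this support at exactly $-2i$.

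The balanced case $2i=c$ is the main technical obstacle. Here both preceding arguments degenerate, and a potential drop $c(\pi\otimes\tilde\lambda)<c$ corresponds to an accidental cancellation in the intertwining constant $C_{\nu^{-1}w_0^{-1}}$ that depends delicately on the character-theoretic data attached to the supercuspidal $\pi$. Under the assumptions $p\ne 2$ and $c(w_\pi)\le 1$, a parity argument on characters of the $p$-group $(1+\varpi^{c/2}O_F)/(1+\varpi^cO_F)$ (which for $p$ odd admits unique square roots) rules out such a cancellation: the cancellation would force $\nu^2$ to coincide, on this subquotient, with a character determined by $w_\pi$ and the admissible-pair data of $\pi$, incompatible with the tameness of $w_\pi$. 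When $p=2$ or $w_\pi$ is highly ramified, this parity obstruction fails and only the upper bound survives, giving the exceptional conclusion $n_\nu\ge -c$. The cleanest organization of the balanced case uses the admissible-pair classification of supercuspidal representations of $\GL_2$, and tracking the precise contribution of the central character to the intertwining constants is where the real work lies.
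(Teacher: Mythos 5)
First, a structural remark: the paper does not actually prove Proposition~\ref{Propofnmu} here; it cites it as \cite[Proposition~B.3]{YH13}, so there is no proof in this text to compare against. Assessed on its own, your plan has the right skeleton --- reducing via Remark~\ref{remrelatingscandlevel} to the twist--conductor identity $c(\pi\otimes\tilde\lambda)=\max\{c,2i\}$, proving an upper bound $c(\pi\otimes\tilde\lambda)\le\max\{c,2i\}$, and using the duality $c=c((\pi\otimes\tilde\lambda)\otimes\tilde\lambda^{-1})\le\max\{c(\pi\otimes\tilde\lambda),2i\}$ to handle $2i<c$. The $2i<c$ step is correct, and the averaging idea for the upper bound is a standard and plausible construction, though you only assert it.

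There are two genuine gaps. In the range $2i>c$ you write that ``a Gauss-sum evaluation of the intertwining constant $C_{\nu^{-1}w_0^{-1}}$, nonvanishing precisely because $c(w_\pi)\le 1<i$, pins this support at exactly $-2i$.'' But the constants $C_\mu$ in (\ref{singleaction}) are \emph{always} nonzero: they are unit scalars with $C_\mu C_{\mu^{-1}w_0^{-1}}=w_0(-1)z_0^{n_\mu}$, so their nonvanishing carries no information about the level $n_\nu$. What actually forces $n_\nu=-2i$ here is the stability theorem for $\epsilon$-factors of twists of $\GL_2$ supercuspidals: once $c(\lambda)>c(\pi)/2$, one has $\epsilon(\pi\otimes\lambda,\psi,s)$ equal (up to an elementary factor) to $\epsilon(\lambda,\psi,s)\,\epsilon(w_\pi\lambda,\psi,s)$, giving conductor $c(\lambda)+c(w_\pi\lambda)=2i$ when $c(w_\pi)<i$. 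The hypothesis you invoke ($c(w_\pi)\le 1<i$) is the right one for $c(w_\pi\lambda)=c(\lambda)$, but you have not invoked the stability threshold $i>c/2$, and the circular-sounding appeal to (\ref{singleaction}) plus ``nonvanishing of $C$'' does not replace it. The second gap is the one you flag yourself: the balanced case $2i=c$. The parity heuristic (unique square roots in $(1+\varpi^{c/2}O_F)/(1+\varpi^c O_F)$ for $p$ odd, together with tameness of $w_\pi$, ruling out the cancellation that would lower the conductor) points toward the correct obstruction --- it is essentially the minimality of $\pi$ under the hypotheses $p\neq 2$, $c(w_\pi)\le 1$ --- but as written it is a direction, not a proof. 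Since both the exceptional clause of the proposition and the whole phenomenon you are trying to rule out live exactly in this case, leaving it at the level of ``where the real work lies'' means the central content of the statement is not established.
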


\begin{rem}
Following Remark \ref{remrelatingscandlevel}, this result is just to say that the representation $\pi$ is minimal under certain conditions.
\end{rem}
As a direct corollary, we have the following result about the Whittaker functional for supercuspidal representations:
\begin{cor}\label{Wiofsc}
\begin{enumerate}
\item $W^{(c)} (\alpha)=\charf_{1,0}$.
\item For general $0\leq i<c$, $W^{(i)} (\alpha)$ is supported only at $v(\alpha)=\min\{0, 2i-c\}$, consisting of level $c-i$ components and also level 0 components when $i=c-1$.
\item The exception happens when $p=2$ or the central character is highly ramified, and $c\geq 4$ is an even number and $i=c/2$. In that case, $W^{(c/2)}$ is supported at $v(\alpha)\geq 0$, consisting of level $c/2$ components.
\end{enumerate}
\end{cor}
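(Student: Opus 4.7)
The plan is to dispatch (1) by $K_1(\varpi^c)$-invariance of the newform, and to reduce (2)/(3) to a step-by-step Kirillov-model computation after a Bruhat-type decomposition of $\zxz{1}{0}{\varpi^i}{1}$. For (1), note $\zxz{1}{0}{\varpi^c}{1}\in K_1(\varpi^c)$, so the $K_1(\varpi^c)$-invariance of the newform $\charf_{1,0}$ yields $W^{(c)}(\alpha)=\charf_{1,0}(\alpha)$.

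For (2) and (3), the key identity is
\[
\zxz{1}{0}{\varpi^i}{1}=\zxz{-\varpi^{-i}}{-1}{0}{-\varpi^i}\,\omega\,\zxz{1}{\varpi^{-i}}{0}{1},
\]
so the computation reduces to applying $\pi$ of the three factors to $\charf_{1,0}$ in the Kirillov model via (\ref{Kirilmodel}) and (\ref{singleaction}). The rightmost piece gives $\psi(\varpi^{-i}u)\charf_{1,0}(u)$, and a standard orthogonality argument for $\psi(\varpi^{-i}\cdot)|_{O_F^*}$ expresses this as a sum of $\charf_{\nu,0}$ over characters $\nu$ of level exactly $i$, with the trivial character appearing only when $i=1$ (picking up coefficient $-q^{-1}$). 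Next, (\ref{singleaction}) sends $\charf_{\nu,0}$ to $C_{\nu w_0^{-1}}\charf_{\nu^{-1}w_0,n_{\nu^{-1}}}$, and Proposition \ref{Propofnmu} identifies $n_{\nu^{-1}}=\min\{-c,-2i\}$ in the standard regime, with the weaker $n_{\nu^{-1}}\geq -c$ in the exceptional regime ($p=2$ or $w_0$ highly ramified, $c\geq 4$ even, $i=c/2$) that directly yields case (3). Finally, $\pi(\zxz{-\varpi^{-i}}{-1}{0}{-\varpi^i})$ shifts the support to $v(\alpha)=n_{\nu^{-1}}+2i$ and multiplies by $w_\pi(-\varpi^i)\psi(\varpi^{-i}\alpha)$, so the support lands at $\min\{0,2i-c\}$ in the standard regime and at $v(\alpha)\geq 0$ in the exceptional one.

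For the character-level claim on the support, the plan is to combine $\nu^{-1}w_0$ of level $i$ with the surviving additive factor $\psi(u\varpi^{n_{\nu^{-1}}+i})$, re-expanded multiplicatively. When $i<c/2$ this additive factor has level $c-i>i$, dominating $\nu^{-1}w_0$ and cleanly forcing every product to level $c-i$. When $i\geq c/2$ both factors sit at level $\leq i$, and verifying the collapse to level $c-i$ (together with a level $0$ contribution emerging precisely when $i=c-1$, inherited from the level $0$ anomaly of $\psi(\varpi^{-1}\cdot)|_{O_F^*}$) is the main obstacle; it will require exploiting the epsilon-factor-like structure of the $C_{\nu w_0^{-1}}$ coefficients from (\ref{singleaction}) so that the Gauss-sum cancellations kill all higher-level contributions.
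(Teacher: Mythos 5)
Your plan handles (1) correctly and does reach the right support valuations in (2) and (3), but the factorization you chose creates an obstacle that you flag as ``the main obstacle'' and do not resolve, and I do not believe it is routine. Writing
$\zxz{1}{0}{\varpi^i}{1}=\zxz{-\varpi^{-i}}{-1}{0}{-\varpi^i}\omega\zxz{1}{\varpi^{-i}}{0}{1}$
places an additive twist on \emph{both} sides of the single $\omega$. After the last step the function lives at $v(\alpha)=\min\{0,2i-c\}=n_{\nu^{-1}}+2i$, and there the outer factor $\psi(\varpi^{-i}\alpha)$ has multiplicative level $\max\{c-i,\,i\}$; hence for $c/2<i<c$ both this factor and the character $\nu^{-1}w_0$ sit at level $i$, strictly larger than the target level $c-i$. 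Forcing the $C_{\nu w_0^{-1}}$-weighted sum over all level-$i$ characters $\nu$ to collapse to level $c-i$ would require a genuine Gauss-sum identity that you do not exhibit; at $i=c-1$, for instance, two families of level-$(c-1)$ characters would have to conspire to produce level-$1$ output plus the level-$0$ anomaly, and that anomaly does not come from $\psi(\varpi^{-1}\cdot)|_{O_F^*}$ in your setup, since your outer twist has level $c-1$, not $1$, on the support.

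The paper sidesteps all of this by using the two-$\omega$ factorization
$\zxz{1}{0}{\varpi^i}{1}=-\omega\zxz{1}{-\varpi^i}{0}{1}\omega$
(it appears in the proof of Proposition~\ref{supportofMCforSC}). After the inner $\omega$ one has $C_{w_0^{-1}}\charf_{w_0,-c}$; the additive twist $\psi(-\varpi^i\,\cdot)$ is then applied at $v(x)=-c$, where its multiplicative level in $u$ (with $x=u\varpi^{-c}$) is $c-i$ for \emph{every} $i<c$, yielding a combination of $\charf_{w_0\mu,-c}$ over characters $\mu$ of level exactly $c-i$, plus a level-$0$ term precisely when $c-i=1$. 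The outer $\omega$ then sends $\charf_{w_0\mu,-c}$ to $C_{\mu}z_0^{c}\charf_{\mu^{-1},\,c+n_{\mu^{-1}w_0^{-1}}}$, and $n_{\mu^{-1}w_0^{-1}}=n_{\mu}$, so a single application of Proposition~\ref{Propofnmu} to $\mu$ (level $c-i$) gives $c+n_{\mu}=\min\{0,2i-c\}$ in the standard regime and $c+n_{\mu}\geq 0$ in the exceptional one, while the level $c-i$ of the surviving characters $\mu^{-1}$ is preserved with no cancellation required. I would replace your factorization with this one; it makes the corollary the advertised ``direct'' consequence of Proposition~\ref{Propofnmu}.
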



Let's see how the results above can be applied to the matrix coefficient of a supercuspidal representation in general. Let
$$\Phi(g)=<\pi(g)F,F>=\int\limits_{\F^*}\pi(g)F(x)\overline{F(x)}d^*x,$$
where $F=\charf_{1,k}\in S(\F^*)$, and $\pi$ is supercuspidal of level $c$
This function is actually bi-$K_1(\varpi^{c+k})-$invariant. But we will only make use of the right $K_1(\varpi^{c+k})-$invariance now. 

By Lemma \ref{Iwasawadecomp}, to understand $\Phi(g)$, it will be enough to understand $\Phi(\zxz{a}{m}{0}{1}\zxz{1}{0}{\varpi^i}{1})$ for $0\leq i\leq c+k$.
\begin{prop}\label{supportofMCforSC}
Suppose $p\neq 2$. 
\begin{enumerate}
\item[(i)] For $c+k-1\leq i\leq c+k$, $\Phi(\zxz{a}{m}{0}{1}\zxz{1}{0}{\varpi^i}{1})$ is supported on $v(a)=0$ and $v(m)\geq -k-1$. On the support, we have
\begin{equation}\label{matrixcoefforsceq1}
\Phi(\zxz{a}{m}{0}{1}\zxz{1}{0}{\varpi^i}{1})=\begin{cases}
1,&\text{\ if\ }v(m)\geq -k \text{\ and\ }i=c+k;\\
-\frac{1}{q-1},&\text{\ if\ }v(m)=-k-1 \text{\ and\ }i=c+k;\\
-\frac{1}{q-1},&\text{\ if\ }v(m)\geq -k \text{\ and\ }i=c+k-1.\\
\end{cases}
\end{equation}
When $v(m)=-k-1$ and $i=c+k-1$, 
\begin{equation}\label{matrixcoefforsceq2}
\int\limits_{v(m)=-k-1}\Phi(\zxz{a}{m}{0}{1}\zxz{1}{0}{\varpi^{c+k-1}}{1})dm=\frac{1}{q-1}q^k.
\end{equation}
\item[(ii)] For $0\leq i<c+k-1$, $i\neq c/2+k$, $\Phi(\zxz{a}{m}{0}{1}\zxz{1}{0}{\varpi^i}{1})$ is supported on $v(a)=\min\{0,2i-c-2k\}$, $v(m)=i-c-2k$. It is of level $c+k-i$ as a function in $a$.
\item[(iii)]When $i=c/2+k$, the conclusion in (ii) still holds except  when $p=2$ or the central character is highly ramified, and $c\geq 4$ is an even number. In that case, one can say $\Phi(\zxz{a}{m}{0}{1}\zxz{1}{0}{\varpi^i}{1})$ is supported on $v(a)\geq 0$, $v(m)=i-c-2k=-c/2-k$. It is of level $c/2$ in $a$.
\end{enumerate}
\end{prop}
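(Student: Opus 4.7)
The plan is to express $\Phi$ as a Fourier integral in the Whittaker model, then read off support and values from Corollary \ref{Wiofsc}. Starting from the identifications $F(x) = W_F(\zxz{x}{0}{0}{1})$ and $(\pi(g)F)(x) = W_F(\zxz{x}{0}{0}{1}g)$, the Whittaker equivariance under upper unipotents, and the fact that $F = \charf_{1,k}$ is real and supported on $v(x) = k$, one obtains for $g = \zxz{a}{m}{0}{1}\zxz{1}{0}{\varpi^i}{1}$:
\begin{equation*}
\Phi(g) = \int_{v(x)=k} \psi(mx)\, W_F^{(i)}(xa)\, d^*x, \qquad W_F^{(i)}(\alpha) := W_F\!\left(\zxz{\alpha}{0}{0}{1}\zxz{1}{0}{\varpi^i}{1}\right).
\end{equation*}
Since $\charf_{1,k} = \pi(\zxz{\varpi^{-k}}{0}{0}{1})\charf_{1,0}$, we have $W_F(g) = W(g\,\zxz{\varpi^{-k}}{0}{0}{1})$ with $W$ the newform Whittaker function. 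For $i \ge k$ a straightforward Iwasawa step gives $W_F^{(i)}(\alpha) = W^{(i-k)}(\alpha\varpi^{-k})$; for $i < k$, the resulting factor $\zxz{1}{0}{\varpi^{i-k}}{1}$ has negative valuation and must be Bruhat-decomposed, producing a sum over characters $\nu$ of $O_F^*$ whose level is controlled by \eqref{singleaction} and Proposition \ref{Propofnmu}.

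By Corollary \ref{Wiofsc} together with Proposition \ref{Propofnmu}, the function $W_F^{(i)}(\alpha)$ is generically supported on $v(\alpha) = \min\{k, 2i-c-k\}$ and equals a level-$(c+k-i)$ character $\nu$ of $O_F^*$ evaluated on $\alpha/\varpi^{v(\alpha)}$. Substituting $\alpha = xa$ with $v(x)=k$ forces $v(a) = \min\{0, 2i-c-2k\}$, which is the $a$-part of (ii). Writing $x = v\varpi^k$, $a = a_0\varpi^{v(a)}$ with $v, a_0 \in O_F^*$, the remaining integral reduces to the Gauss sum
\begin{equation*}
\Phi(g) = \nu(a_0) \int_{O_F^*} \psi(m\varpi^k v)\, \nu(v)\, d^*v,
\end{equation*}
which vanishes unless the additive character $\psi(m\varpi^k\cdot)|_{O_F^*}$ has level exactly $c+k-i$, i.e.\ $v(m) = i-c-2k$, completing (ii). The exceptional (iii) is the same argument using only the weaker bound $n_\nu \ge -c$ from Proposition \ref{Propofnmu}: the support of $W^{(c/2)}$ may then spread over $v(\alpha) \ge 0$, giving $v(a) \ge 0$ and $v(m) = -c/2 - k$ in $\Phi$.

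The boundary cases in (i) require separate, more explicit computation. At $i = c+k$, the identity $W^{(c)} = \charf_{1,0}$ gives $W_F^{(c+k)}(xa) = \charf_{1,k}(xa)$, forcing $v(a) = 0$, and $\int_{O_F^*}\psi(m\varpi^k v)\,d^*v$ evaluates to $1$ for $v(m)\ge -k$ and to $-1/(q-1)$ for $v(m) = -k-1$. At $i = c+k-1$, the formula $\pi(\omega)\charf_{1,0} = C_{w_0^{-1}}\charf_{w_0,-c}$ from \eqref{singleaction} allows one to decompose $W^{(c-1)}$ on $v(\alpha)=0$ into a level-$0$ and a level-$1$ character; the level-$0$ piece contributes $-1/(q-1)$ in \eqref{matrixcoefforsceq1} for $v(m) \ge -k$, and integrating the level-$1$ piece against the level-$1$ character $\psi(m\varpi^k\cdot)$ over $v(m)=-k-1$ yields \eqref{matrixcoefforsceq2}. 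The main technical obstacle is the case $i < k$, where the Bruhat expansion of $\zxz{1}{0}{\varpi^{i-k}}{1}$ must be carefully tracked through \eqref{singleaction} and Proposition \ref{Propofnmu} to preserve the support and level structure; a secondary subtlety is the Gauss-sum normalization in the boundary cases, which hinges on the relation $C_\nu C_{\nu^{-1}w_0^{-1}} = w_0(-1)z_0^{n_\nu}$ from Section 2.6.
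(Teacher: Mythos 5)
Your overall route is essentially the paper's: write
$\Phi\bigl(\zxz{a}{m}{0}{1}\zxz{1}{0}{\varpi^i}{1}\bigr)
  = \int_{v(x)=k}\psi(mx)\,\bigl(\pi(\zxz{1}{0}{\varpi^i}{1})\charf_{1,k}\bigr)(ax)\,d^*x$,
control the support and level of $\pi(\zxz{1}{0}{\varpi^i}{1})\charf_{1,k}$ via the $\omega$-action \eqref{singleaction} and Proposition \ref{Propofnmu}, and read off the orthogonality constraints on $v(a)$ and $v(m)$. Your detour through the newform Whittaker function $W$ and the substitution $W_F^{(i)}(\alpha)=W^{(i-k)}(\alpha\varpi^{-k})$ is a needless complication: it forces the case split $i\ge k$ versus $i<k$ that you flag as ``the main technical obstacle,'' whereas the paper's identity
$\pi(\zxz{1}{0}{\varpi^i}{1}) = \pi(-\omega\zxz{1}{-\varpi^i}{0}{1}\omega)$,
applied directly to $\charf_{1,k}$ in the Kirillov model, handles every $i$ in one stroke. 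Parts (ii) and (iii) are argued correctly (with the small overclaim that $W_F^{(i)}$ is a \emph{single} level-$(c+k-i)$ character rather than a linear combination of such --- harmless here, since the Gauss-sum vanishing criterion is the same).

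The genuine gap is in your explanation of \eqref{matrixcoefforsceq2}. You assert that ``integrating the level-$1$ piece against the level-$1$ character $\psi(m\varpi^k\cdot)$ over $v(m)=-k-1$'' produces the value $q^k/(q-1)$. This has the roles of the two components of $W^{(c-1)}$ exactly reversed. After Fubini,
\begin{equation*}
\int_{v(m)=-k-1}\Phi\,dm
  = \int_{v(x)=k} W_F^{(c+k-1)}(xa)\Bigl(\int_{v(m)=-k-1}\psi(mx)\,dm\Bigr)d^*x
  = -q^k\int_{v(x)=k}W_F^{(c+k-1)}(xa)\,d^*x,
\end{equation*}
since the inner $m$-integral equals $-q^k$ independently of $x$ on $v(x)=k$. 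The remaining $d^*x$-integral over $v(x)=k$ projects onto the level-$0$ component of $W^{(c-1)}$, which is $-\tfrac{1}{q-1}$, giving $\tfrac{q^k}{q-1}$. The level-$1$ component of $W^{(c-1)}$ produces, at each fixed $m$ with $v(m)=-k-1$, a Gauss sum proportional to $\nu^{-1}(u_m)$ for a nontrivial level-$1$ character $\nu$; this oscillating contribution integrates to zero over $v(m)=-k-1$. So \eqref{matrixcoefforsceq2} is carried entirely by the level-$0$ piece, and the level-$1$ piece contributes nothing. (Correspondingly, the ``Gauss-sum normalization'' via $C_\nu C_{\nu^{-1}w_0^{-1}}=w_0(-1)z_0^{n_\nu}$ that you list as a subtlety is not actually needed for \eqref{matrixcoefforsceq2}; what is needed is the computation of the level-$0$ coefficient of $W^{(c-1)}$, which the paper extracts from the fact that $\sum_\nu \charf_{\nu,-k-c}$ at a fixed valuation has $\charf_{1,-k-c}$ as its average.)
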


\begin{proof}
By definition,
\begin{equation}
 \Phi(g)=\int\limits_{v(x)=k}\pi(g)F(x)d^*x.
\end{equation}

To get a non-zero value for $\Phi(g)$, we just need a level 0 component supported at $v(x)=k$ for $\pi(g)F(x)$. 
We first assume that $p\neq 2$, the central character is of level $\leq 1$, and $0\leq i<c+k-1$.
According to Proposition \ref{Propofnmu}, 
$$\pi(\zxz{1}{0}{\varpi^i}{1})\charf_{1,k}(x)=\pi(-\omega\zxz{1}{-\varpi^i}{0}{1}\omega)\charf_{1,k}(x)$$
is supported at $v(x)=\min\{k,2i-c-k\}$, being a linear combination of all level $c+k-i$ characters.
By definition,
\begin{equation}
 \pi(\zxz{a}{m}{0}{1}\zxz{1}{0}{\varpi^i}{1})\charf_{1,k}(x)=\psi(mx)\pi(\zxz{1}{0}{\varpi^i}{1})\charf_{1,k}(ax),
\end{equation}
\begin{equation}
\Phi(\zxz{a}{m}{0}{1}\zxz{1}{0}{\varpi^i}{1})=\int\limits_{v(x)=k}\psi(mx)\pi(\zxz{1}{0}{\varpi^i}{1})\charf_{1,k}(ax)d^*x.
\end{equation}
One can see that we need $$v(a)=\min\{0,2i-c-2k\}$$ to change the support of $\pi(\zxz{1}{0}{\varpi^i}{1})\charf_{1,k}(ax)$ to $v(x)=k$.

When $0\leq i<c+k-1$, we need $\psi(mx)$ also to be of level $c+k-i$ at $v(x)=k$ to get level 0 components from the product. So it's supported at $$v(m)=i-c-2k.$$ It's clear now that $\Phi(\zxz{a}{m}{0}{1}\zxz{1}{0}{\varpi^i}{1})$ as a function of $a$ or $m$ is of level $c+k-i$.
So (ii) is proved. 

(iii) can be proved using the same method.

When one use the same method for (i), there will be two differences which are worth noting. 
The first difference is that when $i=c+k-1$, $\pi(\zxz{1}{0}{\varpi^i}{1})\charf_{1,k}(x)$ is a linear combination of level 1 and also level 0 components.
The second difference is that $\psi(mx)$ has level 0 component at $v(x)=k$ when $v(m)\geq -k-1$. 

Now we will prove (\ref{matrixcoefforsceq2}) and leave (\ref{matrixcoefforsceq1}) to the readers, as the latter is actually much easier to check.

So suppose $i=c+k-1$, $v(a)=0$ and $v(m)=-k-1$. Then $\pi(\zxz{1}{0}{\varpi^{c+k-1}}{1})\charf_{1,k}(ax)$ and $\psi(mx)$ will both be linear combinations of level 1 and level 0 characters.

\begin{align*}
 \int\limits_{v(m)=-k-1}\Phi(\zxz{a}{m}{0}{1}\zxz{1}{0}{\varpi^{c+k-1}}{1})dm&=\int\limits_{v(m)=-k-1}\int\limits_{v(x)=k}\psi(mx)\pi(\zxz{1}{0}{\varpi^{c+k-1}}{1})\charf_{1,k}(ax)d^*xdm\\
&=\int\limits_{v(x)=k}\int\limits_{v(m)=-k-1}\psi(mx)\pi(\zxz{1}{0}{\varpi^{c+k-1}}{1})\charf_{1,k}(ax)dmd^*x\\
&=-q^k\int\limits_{v(x)=k}\pi(\zxz{1}{0}{\varpi^{c+k-1}}{1})\charf_{1,k}(ax)d^*x
 \end{align*}
The last step is to see that the level 0 component of $\pi(\zxz{1}{0}{\varpi^{c+k-1}}{1})\charf_{1,k}$ is $-\frac{1}{q-1}\charf_{1,k}$.
\end{proof}


\section{Upper bound for the global period integral}
From now on we take $G=\D^*$ as decided in Theorem \ref{thmofJacquetconj}. Denote $X=Z_{\A}\D^*(\F)\backslash \D^*(\A)$.
Let $\pi_i, i=1,2,3$ be three unitary automorphic cuspidal representations of $\GL_2$. Let $\pi_i^{\D}$ be the image of $\pi_i$ under Jacquet-Langlands correspondence. They are naturally embedded in $L^2(X)$. We will fix $\pi_1$ and $\pi_2$ and let $\pi_3$ have varying finite conductor, but with bounded components at infinity.
\begin{defn}\label{defofconductor}
 At a local place $v$, let $c_i$ denote the levels of $\pi_i$ at $v$.
 Let 
 $$S=\{v|c_3\geq 2\max\{c_1,c_2\} \text{\ at\ }v\}.$$ 
 Let $\mathcal{N}=\prod_{v\in S}\varpi_v^{c_3-c_2}$
 and  $N=\text{Nm}(\mathcal{N})=\prod_v |\varpi_v|^{-(c_3-c_2)}$.
\end{defn}
\begin{rem}\label{remofdifferentN}
 Note that we don't take $\mathcal{N}$ here to be exactly the conductor of $\pi_3$. But their difference is controlled by the conductors of $\pi_1$ and $\pi_2$ which are fixed. In particular this difference is negligible when we consider the asymptotic behavior. 
\end{rem}
We claim here without proof that for $v\in S$, the local epsilon factor $\epsilon_v(\Pi_v,1/2)=1$, so $\D$ is the matrix algebra at these places. (We will prove this claim in Corollary \ref{coroflocalepsiloninS}. ) For this reason, the following definition makes sense:

\begin{defn}
 For $\mathcal{N}$ defined as above, let 
 
 $$a_v([\mathcal{N}])=\zxz{\varpi_v^{-(c_3-c_2)}}{0}{0}{1},$$
 and
  $$a([\mathcal{N}])=\prod_{v}a_v([\mathcal{N}]).$$
$a([\mathcal{N}])$ can be naturally embedded into $Z_\A \D^*(\F)\backslash \D^*(\A)$.

\end{defn}
Take cusp forms $f_i\in\pi_i^\D,i=1,2,3$ . We want to bound the global period integral
\begin{equation}
\I(f_1,\rho(a([\mathcal{N}]))f_2,f_3)= \int_X f_1(x)f_2(xa([\mathcal{N}]))f_3(x)dx.
\end{equation}
But before that, let's specify a little more about our choices of local components for $f_i$'s. 
\begin{enumerate}
 \item[(i)] At almost all places when all three representations are unramified, we will just choose local components to be spherical;
 \item[(ii)] For places in $S$, we will always pick newforms for all local components;
 \item[(iii)] For the remaining places, we will pick proper newforms or old forms to guarantee that the local integral $I_v\geq \delta$ for some $\delta>0$. In particular the local component of
 $f_1$ and $f_2$ can be chosen from a finite set of test vectors.
 \item[(iv)] $f_i$'s are globally and locally normalized.
\end{enumerate}
\begin{rem}\label{remofplacesoutsideS}
(iii) is guaranteed because the level of $\pi_3$ is controlled by the levels of $\pi_1$ and $\pi_2$ for places outside $S$. It's essentially proven in Lemma 6.3 and Lemma 6.4 of \cite{MW12}. Basically if we fix the level of $\pi_3$, the parametrization of all possible representations with fixed central character is compact. 
Theorem \ref{thmofJacquetconj} will guarantee that the local integral $I_v$ is not zero with a proper choice of test vectors, then $I_v$ can be bounded away from zero in an open neighborhood of the parametrization. Then (iii) is true because of compactness.

\end{rem}

Now we can state our result on the upper bound of global period integral:
\begin{prop}\label{propofglobalupbd}
 Let $\pi_i,i=1,2,3$ be three unitary automorphic cuspidal representations with $\pi_1$ and $\pi_2$ fixed. Let $f_i\in\pi_i,i=1,2$ and $\varphi\in\pi_3$ be cusp forms with local components specified as above. Then 
 \begin{equation}
 \I(f_1,\rho(a([\mathcal{N}]))f_2,f_3)=\int_X f_1(x)f_2(xa([\mathcal{N}]))f_3(x)dx\ll N^{-\delta},
 \end{equation}
 where $\delta$ can be taken to be any positive number less than $-\frac{(\alpha-1/2)(2\alpha-1/2)}{4\alpha-3}>\frac{1}{24}$ for $\alpha=7/64$.

\end{prop}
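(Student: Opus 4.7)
The plan is to adapt the amplification strategy of Venkatesh and Woodbury to the present generality. The four principal steps are: (i) build a Hecke amplifier acting by a large eigenvalue on $f_{3}$; (ii) shift it onto $f_{1}\cdot f_{2}^{a}$ (where $f_{2}^{a}(x)=f_{2}(xa([\mathcal{N}]))$) using self-duality, then apply Cauchy--Schwarz; (iii) expand the resulting $L^{2}$-norm as a double sum over amplifier primes, reducing each inner product via the Hecke convolution identities \eqref{Hecke1}--\eqref{Hecke3} and Lemma \ref{convolutionofHecke}; (iv) bound each term using the global matrix coefficient estimate of Proposition \ref{propofboundglobalMC} and optimize the amplifier length.

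For the amplifier I fix a large parameter $L$ and let $\mathcal{L}$ be the set of prime ideals $\mathfrak{l}$ of norm $\sim L$ that avoid the ramification of $\pi_{1},\pi_{2},\pi_{3}$. By Corollary \ref{corofboundoneigenvalues}, for each $\mathfrak{l}\in\mathcal{L}$ I can pick $r_{\mathfrak{l}}\in\{1,2\}$ and a sign $\epsilon_{\mathfrak{l}}$ with $\epsilon_{\mathfrak{l}}\check{\lambda}_{\mathfrak{l}^{r_{\mathfrak{l}}}}\ge 1/2$, where $\check{\lambda}$ denotes the eigenvalue of the dual Hecke operator on $f_{3}$. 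Setting $\mathcal{A}=\sum_{\mathfrak{l}}\epsilon_{\mathfrak{l}}\mu_{\mathfrak{l}^{r_{\mathfrak{l}}}}$ yields $f_{3}*\mathcal{A}=\Lambda f_{3}$ with $|\Lambda|\gg L/\log L$. Since $f_{1}f_{2}^{a}f_{3}$ has trivial central character (by $\prod_{i}w_{\pi_{i}}=1$), the self-duality \eqref{formulaofselfdualHecke} applies, and combined with Cauchy--Schwarz we obtain
$$|\Lambda|^{2}\,|\I|^{2}\;\le\;\|f_{3}\|_{2}^{2}\cdot\bigl\|(f_{1}f_{2}^{a})*\check{\mathcal{A}}\bigr\|_{2}^{2}.$$

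Next, expanding the squared norm as a double sum over $(\mathfrak{l}_{1},\mathfrak{l}_{2})\in\mathcal{L}^{2}$ and using self-duality once more to move the convolutions onto a single factor, the identities \eqref{Hecke1}--\eqref{Hecke3} and Lemma \ref{convolutionofHecke} let me rewrite each inner product as a linear combination of integrals of the shape $\int_{X}|f_{1}|^{2}(x)\cdot|f_{2}|^{2}(x\,a([\mathcal{N}])\,t_{\mathfrak{n}})\,dx$, where $t_{\mathfrak{n}}$ is the shift matrix corresponding to an ideal $\mathfrak{n}\mid\mathfrak{l}_{1}^{r_{1}}\mathfrak{l}_{2}^{r_{2}}$. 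After subtracting the one-dimensional projections $\mathcal{P}|f_{i}|^{2}$, Proposition \ref{propofboundglobalMC} bounds each such integral by a constant times $(N\cdot N\mathfrak{n})^{\alpha-1/2+\epsilon}$; the one-dimensional contribution is handled separately via orthogonality of Hecke eigenvalues against the ray class characters picked up by $\mathcal{P}$.

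Collecting the diagonal ($\mathfrak{l}_{1}=\mathfrak{l}_{2}$) and off-diagonal contributions and dividing by $|\Lambda|^{2}\gg L^{2}$ produces an estimate of the shape $|\I|^{2}\ll L^{-1}+L^{\sigma(\alpha)}N^{\alpha-1/2+\epsilon}$ with an explicit exponent $\sigma(\alpha)$; balancing the two terms in $L$ yields the claimed $\delta<-(\alpha-1/2)(2\alpha-1/2)/(4\alpha-3)$. The main technical obstacle, and the novelty relative to \cite{AV10}, is the non-trivial central character $\omega_{\pi_{3}}^{-1}$ carried by $f_{1}f_{2}^{a}$: the self-duality and Hecke convolution relations must be replaced by their twisted versions \eqref{Hecke1}--\eqref{Hecke3}, and the projection $\mathcal{P}$ onto one-dimensional components has to be handled carefully because the characters $\chi$ with $\chi^{2}=\omega_{\pi_{3}}^{-1}$ need not be trivial. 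Controlling the one-dimensional contribution separately, and ensuring it is absorbed by the amplifier gain rather than producing a spurious main term, will be the most delicate piece of bookkeeping in the proof.
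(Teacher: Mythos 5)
Your proposal follows essentially the same route as the paper: amplifier acting on $f_3$, self-duality to move the convolution onto $\Psi=f_1\cdot f_2^a$, Cauchy--Schwarz, expansion as a double Hecke sum reduced via Lemma~\ref{convolutionofHecke}, separation of the one-dimensional projections, Proposition~\ref{propofboundglobalMC} for the non-one-dimensional part and Lemma~\ref{localboundmatrixcoeff} for the amplifier places, then optimization of the length. The one place your sketch is imprecise is step~(iii): after opening the square the integrand has the form $\int_X h_1(x)h_2(xa([\mathcal{N}]))\,dx$ with $h_i(x)=f_i(xg)\overline{f_i(xg')}$, which does \emph{not} collapse to $\int_X|f_1|^2(x)\,|f_2|^2(xa([\mathcal{N}])t_{\mathfrak{n}})\,dx$ after integrating in $g,g'$ — the Hecke measure is spread over a double coset and cannot be absorbed into a single shift $t_{\mathfrak{n}}$. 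Consequently the single bound $(N\cdot\mathrm{Nm}(\mathfrak{n}))^{\alpha-1/2+\epsilon}$ you quote is not what arises; in the paper the non-one-dimensional part is bounded uniformly in $g,g'$ by $N^{\alpha-1/2+\epsilon}$ (multiplied afterwards by $\|\sigma\|^2$), while the one-dimensional part produces the separate factor $\mathrm{Nm}(\mathfrak{n})^{2\alpha-1/2+\epsilon}$ via the local bound at the amplifier places, and the two contributions are added, not multiplied. This is a bookkeeping issue rather than a conceptual gap: correcting the form of the inner product and keeping the two bounds separate leads to exactly the paper's optimization and exponent. Also note that once the square is opened the functions $h_i$ already have \emph{trivial} central character, so the twisted Hecke relations \eqref{Hecke1}--\eqref{Hecke3} are needed only for the amplifier's action on $f_3$ (via Corollary~\ref{corofboundoneigenvalues}), not for the manipulation of $f_1f_2^a$; the characters $\chi$ in the projection $\mathcal{P}h_i$ satisfy $\chi^2=1$, which makes this step less delicate than you anticipate.
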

\begin{proof}
We will basically follow the proof as in \cite{AV10}.
First we specify a signed measure $\sigma$ on $G(\A_\F)$ we are going to use. We will take $\sigma=\sum_{\mathfrak{n}}a_\mathfrak{n}\mu_\mathfrak{n}$, where $\mu_\mathfrak{n}$ is the measure associated to $\mathfrak{n}-$th Hecke operator as defined in Section 2.
We will choose the sequence of complex numbers $a_\mathfrak{n}$ as follows:

Let $b$ be a fixed small positive real number to be chosen. For every finite place $v$, let $\mathfrak{l}$ be a maximal prime ideal there.  Let $T$ be the set of places where $\text{Nm}(\mathfrak{l})\in [N^b,2N^b]$ and $\pi_i$'s are unramified.
In particular by the choice of local component (i), $f_i$'s are spherical at these places. As $f_1$ and $f_2$ have fixed conductor, and primes involved in the conductor of $\pi_3$ are asymptotically less than $N^\epsilon$ for any $\epsilon>0$ as $N\rightarrow \infty$, $T$ will essentially contain all the primes with norm in $[N^b, 2N^b]$. More specifically by the distribution of primes, 
we have $ N^{b-\epsilon}\ll |T|\ll N^{b+\epsilon}$.

For $z\in \C$ we put $\text{sign}(z)=z/|z|$ for $z\neq 0$ and $\text{sign}(0)=1$. Put
\begin{equation}
a_\mathfrak{n}=\begin{cases}
\overline{\text{sign}(\check{\lambda_\mathfrak{n}}(\mathfrak{n}))}, &\mathfrak{n}\in T \text{\ or\ } \mathfrak{n}=\mathfrak{l}^2, \mathfrak{l}\in T\\
0, &\text{\ else.}
\end{cases}
\end{equation}
Here $\check{\lambda_\mathfrak{n}}$ is the eigenvalue of the Hecke operator $\check{\mu_\mathfrak{n}}$ acting on $f_3$, as the local component of $f_3$ at this place is spherical. Then by the definition above, one can easily verify the following inequalities, which we will make use of later:
\begin{equation}\label{globalineq1}
|\sum_\mathfrak{n}a_\mathfrak{n}\check{\lambda_\mathfrak{n}}|\gg_{\epsilon,\F}N^{b-\epsilon}.
\end{equation}
\begin{equation}\label{globalineq2}
\sum_\mathfrak{n}\text{Nm}(\mathfrak{n})^{1/2+\epsilon}|a_\mathfrak{n}|\ll_\epsilon N^{2b+\epsilon}.
\end{equation}
\begin{equation}\label{globalineq3}
\sum_{\mathfrak{n},\mathfrak{m}}\sum_{\mathfrak{d}|(\mathfrak{n},\mathfrak{m})}(\text{Nm}(\frac{\mathfrak{n}\mathfrak{m}}{\mathfrak{d}^2}))^{2\alpha-1/2}|a_\mathfrak{n}||a_\mathfrak{m}|\ll N^{(4\alpha+1)b}
\end{equation}
The first equality follows from Corollary \ref{corofboundoneigenvalues}. The second and the third inequalities are more direct to check. $\alpha$ in the last inequality is a bound towards Ramanujan conjecture, and we need the fact that one can take $\alpha<1/4$.

Now for the measure $\sigma$ defined as above, we have $f_3*\check{\sigma}=\lambda f_3$, where
\begin{equation}
 \lambda=\sum_\mathfrak{n}a_\mathfrak{n}\check{\lambda_\mathfrak{n}}.
\end{equation}
Let $\Psi(x)=f_1(x)f_2(xa([\mathcal{N}]))\in C^\infty(X)$. Then
\begin{align}\label{globalfirststep}
 \lambda\I&=\int_X \Psi(x)(f_3*\check{\sigma})(x)dx=\int_X(\Psi*\sigma)(x)f_3(x)dx\leq (\int_X|\Psi*\sigma|^2dx)^{1/2}\\
 &=(\int_X\int_{g,g'\in G(\A_\F)}(\rho(g)\Psi)\overline{(\rho(g')\Psi)}d\sigma(g)d\sigma(g')dx)^{1/2}\notag\\
 &=(\int_X\int_{g,g'\in G(\A_\F)}f_1(xg)f_2(xa([\mathcal{N}])g)\overline{f_1(xg')f_2(xa([\mathcal{N}])g')}d\sigma(g)d\sigma(g')dx)^{1/2}\notag\\
 &=(\int_X\int_{g,g'\in G(\A_\F)}f_1(xg)f_2(xga([\mathcal{N}]))\overline{f_1(xg')f_2(xg'a([\mathcal{N}]))}d\sigma(g)d\sigma(g')dx)^{1/2}\notag
 \end{align}
 
In the last equality, we have used that according to our choice of $\sigma$, the support of $\sigma$ commmutes with $a([\mathcal{N}])$. Now we want to change the order of the integral, separate the constant part and use Proposition \ref{propofboundglobalMC} to bound the difference. In particular, 
let $h_i(x)=f_i(xg)\overline{f_i(xg')}, i=1,2$, so the center acts trivially on $h_i(x)$. Then we have
\begin{align}\label{globalsecondstep}
 &\text{\ \ }|\int_Xh_1(x)h_2(xa(\mathcal{N}))dx-\sum_{\chi^2=1}\chi(\mathcal{N})\int_X h_1(x)\chi(x)dx\int_X h_2(x)\chi(x)dx|\\
 &=|<h_1,\rho(a(\mathcal{N}))h_2>-<\mathcal{P}h_1, \rho(a(\mathcal{N}))\mathcal{P}h_2>|\notag\\
 &\ll N^{\alpha-1/2+\epsilon}||h_1||_{L^2}||h_2||_{L^2} \notag\\
 &\ll N^{\alpha-1/2+\epsilon}\notag
\end{align}
The implicit constant depends on the compact open subgroups that stabilize $f_1$ and $f_2$ at places in $S$, thus is bounded. In the last inequality we have used
$ ||h_i||_{L^2}\leq ||f_i||^2_{L^4}$, which is finite and bounded because $f_i$'s are normalized cusp forms chosen from a finite fixed collection for $i=1,2$.

Combining (\ref{globalfirststep}) and (\ref{globalsecondstep}), we have
\begin{align}\label{global3step}
 |\lambda\I|^2\ll N^{\alpha-1/2+\epsilon}||\sigma||^2
 +\sum_{\chi^2=1}\int_{g,g'}|<\rho(g^{-1}g')f_1,f_1\otimes\chi><\rho(g^{-1}g')f_2,f_2\otimes\chi>|d|\sigma|(g)d|\sigma|(g'),
\end{align}
where $|\sigma|=\sum_{\mathfrak{n}}|a_\mathfrak{n}|\mu_{\mathfrak{n}}$ is the total variation measure associated to $\sigma$, $||\sigma||=|\sigma|(X)$ is the total variation of $\sigma$. 

Note that if we consider $|<\rho(g^{-1}g')f_1,f_1\otimes\chi><\rho(g^{-1}g')f_2,f_2\otimes\chi>|$ as a function of $g$ or $g'$, the center acts on it trivially as the central characters are unitary. So Hecke operators $\mu_\mathfrak{n}$ act on it nicely. In particular,
define $\sigma^{(2)}=|\sigma|*|\sigma|$. Then we can rewrite the above result as
\begin{equation}\label{global4step}
  |\lambda\I|^2\ll N^{\alpha-1/2+\epsilon}||\sigma||^2
 +\sum_{\chi^2=1}\int_{g}|<\rho(g)f_1,f_1\otimes\chi><\rho(g)f_2,f_2\otimes\chi>|d\sigma^{(2)}(g).
\end{equation}
According to Lemma \ref{convolutionofHecke}, we have the following for spherical local components on which the center acts trivially:
\begin{equation}\label{globaling1}
\sigma^{(2)}=\sum_{\mathfrak{n},\mathfrak{n}}|a_\mathfrak{n}||a_\mathfrak{m}|\sum_{\mathfrak{d}|(\mathfrak{n},\mathfrak{m})}\mu_{\mathfrak{n}\mathfrak{m}\mathfrak{d}^{-2}}.
\end{equation}


According to Lemma \ref{localboundmatrixcoeff}, one can easily prove that for spherical functions:
\begin{equation}
 \int_{g\in G(\A_\F)}|<\rho(g)f_1,f_1\otimes\chi><\rho(g)f_2,f_2\otimes\chi>|d\mu_\mathfrak{n}(g)\ll_\epsilon\text{Nm}(\mathfrak{n})^{2\alpha-1/2+\epsilon}.
\end{equation}
Moreover, for fixed $g\in \text{Supp}(\mu_\mathfrak{n})$, the inner product $<\rho(g)f_1,f_1\otimes\chi>$ is nonvanishing only if $\chi$ is unramified at all places not dividing $\mathfrak{n}$ and $f_1$ is unramified. The number of such quadratic characters is $O_\epsilon(\text{Nm}(\mathfrak{n})^\epsilon N ^\epsilon)$, where the implicit constant is allowed to depend on the base field $\F$. Thus
\begin{equation}\label{globaling2}
\sum_{\chi^2=1}\int_{g\in G(\A_\F)}|<\rho(g)f_1,f_1\otimes\chi><\rho(g)f_2,f_2\otimes\chi>|d\mu_\mathfrak{n}(g)\ll_\epsilon\text{Nm}(\mathfrak{n})^{2\alpha-1/2+\epsilon}N^{\epsilon}.
\end{equation}
One can also check that
\begin{equation}\label{globaling3}
||\sigma||\ll_\epsilon \sum_{\mathfrak{n}}\text{Nm}(\mathfrak{n})^{1/2+\epsilon}|a_\mathfrak{n}|.
\end{equation}
Now combine formulae (\ref{globaling1}), (\ref{globaling2}) and (\ref{globaling3}) into (\ref{global4step}), we have
\begin{equation}
|\I|\ll N^\epsilon\frac{((\sum_\mathfrak{n}\text{Nm}(\mathfrak{n})^{1/2+\epsilon}|a_\mathfrak{n}|)^2N^{\alpha-1/2+\epsilon}+\sum_{\mathfrak{n},\mathfrak{m}}\sum_{\mathfrak{d}|(\mathfrak{n},\mathfrak{m})}(\text{Nm}(\frac{\mathfrak{n}\mathfrak{m}}{\mathfrak{d}^2}))^{2\alpha-1/2+\epsilon}|a_\mathfrak{n}||a_\mathfrak{m}|)^{1/2}}
{|\sum_\mathfrak{n}a_\mathfrak{n}\check{\lambda_\mathfrak{n}}|}.
\end{equation}
Now we make use of the inequalities (\ref{globalineq1}), (\ref{globalineq2}) and (\ref{globalineq3}) and get
\begin{equation}
|\I|\ll N^\epsilon\frac{(N^{4b+\alpha-1/2}+N^{(4\alpha+1)b})^{1/2}}{N^b}.
\end{equation}
Now pick $b=\frac{\alpha-1/2}{4\alpha-3}>0$ as we can pick $\alpha<1/4$. Then the above inequality becomes
\begin{equation}
|\I|\ll N^{\frac{(\alpha-1/2)(2\alpha-1/2)}{4\alpha-3}+\epsilon}.
\end{equation}
Again $\frac{(\alpha-1/2)(2\alpha-1/2)}{4\alpha-3}<0$. When we pick $\alpha=7/64$,
\begin{equation}
 \frac{(\alpha-1/2)(2\alpha-1/2)}{4\alpha-3}=-\frac{225}{5248}<-\frac{1}{24}.
\end{equation}

\end{proof}

\begin{rem}\label{remofN1N2}
 
 The roles of $f_1$ and $f_2$ are interchangeable. One can also, for example, assume $\mathcal{N}=\mathcal{N}_1\mathcal{N}_2$ with $\mathcal{N}_1$ and $\mathcal{N}_2$ relatively prime, and get a similar inequality
 \begin{equation}
  \int_X f_1(xa([\mathcal{N}_1]))f_2(xa([\mathcal{N}_2]))f_3(x)dx\ll N^{-\delta}.
 \end{equation}

\end{rem}

\section{Local integral for the triple product $L$-function}

In this section, we shall compute the local integral for the triple product $L-$function explicitly. As we will work purely locally, let's suppress subscript $v$ in this section. 

Let $\pi_i, i=1,2,3$ be three local irreducible unitary representations of $\GL_2$. Let $f_i\in \pi_i$ be the normalized newforms for places $v\in S$ according to our choice in the last section. Let $\Phi_i=<\pi_i(g)f_i,f_i>$ for $i=1,3$ and $\Phi_2(g)=<\pi_2(ga_v([\mathcal{N}]))f_2,\pi_2(a_v([\mathcal{N}])) f_2>$. We will compute in this section the following integral
\begin{equation}\label{localtarget2}
\int\limits_{\F^*\backslash\GL_2(\F)}\Phi_1(g)\Phi_2(g)\Phi_3(g)dg.
\end{equation}
We will assume that $c_3\geq 2\max\{c_1,c_2,1\}$. The difference between this assumption and the condition for the set of places $S$ is the case $c_1=c_2=0$ and $c_3=1$. But this case was already considered in \cite{MW12}.
In general the exact value of the matrix coefficient is very difficult to write out explicitly, and so is the local integral (\ref{localtarget2}). But with the assumption $c_3\geq 2\max\{c_1,c_2,1\}$, the computations turn out to be very nice and simple. 

We will consider all possible local irreducible unitary representations which fall into the following three types:
\begin{enumerate}
\item[Type 1.]$\pi$ supercuspidal or of form $\pi(\mu_1,\mu_2)$ where $\mu_i$ is ramified of level $k_i>0$ for $i=1,2$;
\item[Type 2.]$\pi$ unramified or special unramified;
\item[Type 3.]$\pi$ of form $\pi(\mu_1,\mu_2)$ where $\mu_1$ is unramified and $\mu_2$ ramified of level $k$.
\end{enumerate}
Note we don't have to consider the case when $\mu_1$ is ramified and $\mu_2$ is unramified as $\pi(\mu_1,\mu_2)\cong\pi(\mu_2,\mu_1)$. Also when $\pi$ is of form $\sigma(\chi|\cdot|^{1/2},\chi|\cdot|^{-1/2})$ where $\chi$ is ramified, we can pick the newform similarly as in the second case of Type 1. So this case won't be considered as a different case.

%

\begin{theo}\label{thmlocalint}
Let $\pi_1$, $\pi_2$, $\pi_3$ be three local irreducible unitary  representations of $\GL_2$, with levels satisfying $c_3\geq 2\max\{c_1,c_2,1\}$. Then the local integral
\begin{equation}
\int\limits_{\F^*\backslash\GL_2(\F)}\Phi_1(g)\Phi_2(g)\Phi_3(g)dg=\frac{(1-A)(1-B)}{(q+1)q^{c_3-1}},
\end{equation}
where 
$$A=\Phi_1(\zxz{1}{\varpi^{-1}}{0}{1})\text{\ \ and\ \ }B=\Phi_2(\zxz{1}{0}{\varpi^{c_3-1}}{1}).$$
More specifically we have the following tables of values of $A$ and $B$ for all three types of irreducible unitary representations
 
\begin{tabular}{|p{1cm}|p{2.5cm}|p{4.8cm}|p{3.5cm}|p{2.5cm}|}
\hline
$\pi_1$	&Type 1 &unramified of form $\pi(\chi_1,\chi_2)$	&special unramified	&Type 3\\ \hline
A	&$-\frac{1}{q-1} $	&$\frac{1}{q+1}(\frac{\chi_1}{\chi_2}+\frac{\chi_2}{\chi_1}+1-q^{-1})$	&$-q^{-1}$	&0 \\\hline
\end{tabular}

\begin{tabular}{|p{1cm}|p{2.5cm}|p{4.8cm}|p{3.5cm}|p{2.5cm}|}
\hline
$\pi_2$	&Type 1 &unramified of form $\pi(\eta_1,\eta_2)$	&special unramified	&Type 3\\ \hline
B	&$-\frac{1}{q-1} $	&$\frac{1}{q+1}(\frac{\eta_1}{\eta_2}+\frac{\eta_2}{\eta_1}+1-q^{-1})$	&$-q^{-1}$	&0 \\\hline
\end{tabular}

\end{theo}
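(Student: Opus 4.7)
The plan is to apply Lemma \ref{localintcoefficient} with $c = c_3$ to decompose the integral over $\F^* \backslash \GL_2(\F)$ into a sum over $i \in \{0, 1, \ldots, c_3\}$ of integrals over the Borel subgroup. For this to be legitimate, I first verify that $\Phi_1 \Phi_2 \Phi_3$ is right $K_1(\varpi^{c_3})$-invariant with trivial central action. Triviality of the central character follows from $\prod_i w_{\pi_i} = 1$; the $K_1(\varpi^{c_3})$-invariance holds for $\Phi_3$ directly (newform), for $\Phi_1$ because $c_1 \leq c_3/2 \leq c_3$, and for $\Phi_2$ via the identity $\Phi_2(g) = \Phi_2^{\mathrm{new}}(a_v^{-1} g a_v)$ (from unitarity of the pairing) combined with the inclusion $a_v^{-1} K_1(\varpi^{c_3}) a_v \subseteq K_1(\varpi^{c_2})$, which is exactly where $c_3 \geq c_2$ is used.

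The heart of the computation is to show that for each $0 \leq i \leq c_3 - 2$, the corresponding summand vanishes. Applying Corollary \ref{Wiofsc} (for supercuspidal $\pi_3$), Proposition \ref{supportofMCforSC}, and the analogous Whittaker-based statements from Lemma \ref{Wiofram} and Lemma \ref{Wiofbiasram} (for ramified principal series), $\Phi_3(b w_i)$ with $b = \zxz{a}{m}{0}{1}$ and $w_i = \zxz{1}{0}{\varpi^i}{1}$ is concentrated on a single coset of $(v(a), v(m))$ and, on that coset, defines a nontrivial character of $a/\varpi^{v(a)} \in O_\F^*$ of level $c_3 - i$. The hypothesis $c_3 \geq 2\max\{c_1, c_2\}$ forces $c_3 - i > \max\{c_1, c_2\}$ throughout this range, so the $a$-dependence of $\Phi_1(bw_i) \Phi_2(bw_i)$ has strictly smaller level, and the $a$-integral vanishes by orthogonality of multiplicative characters on units.

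Only the $i = c_3$ and $i = c_3 - 1$ contributions survive. For $i = c_3$, the factor $w_{c_3} \in K_1(\varpi^{c_3})$ is absorbed, and the support of $\Phi_3(b)$ pins $(v(a), v(m))$ to $v(a) = 0$, $v(m) \geq -1$. Splitting the $m$-integral into $v(m) \geq 0$ and $v(m) = -1$, using the newform identities $\Phi_j(\mathrm{diag}(a,1)) = 1$ for $a \in O_\F^*$ together with the multiplicative change of variable $y = ux$ (which shows $\Phi_j(n(u\varpi^{-1})) = \Phi_j(n(\varpi^{-1}))$ is independent of the unit $u \in O_\F^*$), the $i = c_3$ contribution reduces to $A_{c_3}(1 - A)$ times a factor that combines with the $i = c_3 - 1$ piece to yield $1 - B$. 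For $i = c_3 - 1$, the support of $\Phi_3(b w_{c_3-1})$ is again $v(a) = 0, v(m) \geq -1$ with values given in Proposition \ref{supportofMCforSC}(i), and the conjugation identity $a_v^{-1} \zxz{1}{0}{\varpi^{c_3-1}}{1} a_v = \zxz{1}{0}{\varpi^{c_2-1}}{1}$ is precisely what makes $B = \Phi_2^{\mathrm{new}}(\zxz{1}{0}{\varpi^{c_2-1}}{1})$ appear; combining the two pieces then gives the claimed product $(1-A)(1-B)/((q+1)q^{c_3-1})$.

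The table entries for $A$ and $B$ are then verified by direct evaluation on the Kirillov side. For Type 1 representations, $A = \int_{O_\F^*} \psi(\varpi^{-1}x)\, d^*x = -1/(q-1)$. For the unramified principal series $\pi(\chi_1, \chi_2)$, one uses Lemma \ref{lemofMCforUNram} after an Iwasawa decomposition of $\zxz{1}{\varpi^{-1}}{0}{1}$ to read off the value $(\chi_1/\chi_2 + \chi_2/\chi_1 + 1 - q^{-1})/(q+1)$; the special unramified case is the specialization $\chi_i = \chi|\cdot|^{\pm 1/2}$. For Type 3, where the newform Kirillov vector is supported on all of $v(\alpha) \geq 0$ with $|\cdot|^{1/2}$-decay, the value of $A$ vanishes by an exact cancellation between the $v(\alpha) = 0$ and $v(\alpha) \geq 1$ contributions. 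The analogous identities for $B$ follow by the same method on the $\pi_2$ side. The main obstacle is the bookkeeping: ensuring uniformly across all allowed combinations of representation types for $\pi_1, \pi_2, \pi_3$ — including boundary cases such as $c_3 = 2c_2$ exactly and the exceptional even-level case in Proposition \ref{supportofMCforSC}(iii) — that the intermediate $i$'s really do contribute zero and that the surviving pieces factor cleanly as $(1-A)(1-B)$.
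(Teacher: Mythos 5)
Your proposal follows the paper's own strategy almost step for step: decompose via Lemma \ref{localintcoefficient}, kill the terms $0 \le i \le c_3-2$ by a level/orthogonality mismatch in the $a$-variable (the paper's Claims 1 and 2 together with Lemma \ref{lemofPhi3}), reduce the surviving $i=c_3$ and $i=c_3-1$ contributions to $A$, $B$ via the conjugation identity for $\Phi_2$, and then verify the table entries case by case. The one place where your description oversimplifies is the assertion that for every intermediate $i$ the matrix coefficient $\Phi_3(bw_i)$ is concentrated on a single $(v(a),v(m))$-coset and is a character of exact level $c_3-i$: this is accurate for supercuspidal $\pi_3$ (Proposition \ref{supportofMCforSC}), but for $\pi_3 = \pi(\mu_1,\mu_2)$ in the range $c_3/2 \le i \le c_3-2$ the paper only establishes support on $v(m)\ge -c_3/2$ together with level $\le c_3/2$ and \emph{absence of level-$0$ components} (Lemma \ref{lemofPhi3}(2)), which is what actually makes the $a$-integral vanish against the level-$0$ functions $\Phi_1\Phi_2$ there — so your conclusion is right but the stated reason needs this adjustment.
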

\vspace{0.5cm}

\subsection{General strategy}

As all the matrix coefficients will be right $K_1(\varpi^{c_3})-$invariant, it's natural to separate the integral on $\F^*\backslash\GL_2(\F)$ into integrals on the sets of the form
$$\zxz{a}{m}{0}{1}\zxz{1}{0}{\varpi^i}{1}K_1(\varpi^{c_3})$$
for $0\leq i\leq c_3$. So one would like to know the values of $\Phi_i$ on matrices of the form $\zxz{a}{m}{0}{1}\zxz{1}{0}{\varpi^i}{1}$.

As we have assumed that $c_3\geq 2\max\{c_1,c_2,1\}$, $\pi_3$ will always be of Type 1 where $k_1=k_2$ if $\pi_3$ is induced from $\mu_1$ and $\mu_2$. This is because we have assumed that the product of central characters is always trivial.

We shall first figure out the properties of $\Phi_3$:

\begin{lem}\label{lemofPhi3}
Let $\pi_3$ be a supercuspidal representation or $\pi(\mu_1,\mu_2)$ where $\mu_1$ $\mu_2$ are both of level $c_3/2$. 
\begin{enumerate}
\item[(1)]When $i=c_3$ or $c_3-1$, $\Phi_3(\zxz{a}{m}{0}{1}\zxz{1}{0}{\varpi^i}{1})$ is supported at $v(a)=0$ and $v(m)\geq -1$. We have the following special values on the support:
\begin{equation}
\Phi_3(\zxz{a}{m}{0}{1}\zxz{1}{0}{\varpi^i}{1})=\begin{cases}
1,&\text{\ if\ }v(m)\geq 0 \text{\ and\ }i=c_3;\\
-\frac{1}{q-1},&\text{\ if\ }v(m)=-1 \text{\ and\ }i=c_3;\\
-\frac{1}{q-1},&\text{\ if\ }v(m)\geq 0 \text{\ and\ }i=c_3-1.\\
\end{cases}
\end{equation}
When $v(a)=0$, $v(m)=-1$ and $i=c_3-1$, 
\begin{equation}\label{Phi3ramint}
\int\limits_{v(m)=-1}\Phi_3(\zxz{a}{m}{0}{1}\zxz{1}{0}{\varpi^{c_3-1}}{1})dm=\frac{1}{q-1}.
\end{equation}

\item[(2)]When $i\geq c_3/2$, $\Phi_3(\zxz{a}{m}{0}{1}\zxz{1}{0}{\varpi^i}{1})$ is supported at $v(m)\geq -c_3/2$ and consists of level$\leq c_3/2$ characters in $a$. It doesn't contain level 0 components in $a$ unless $i=c_3$ or $i=c_3-1$.
\item[(3)]When $0\leq i<c_3/2$, $\Phi_3(\zxz{a}{m}{0}{1}\zxz{1}{0}{\varpi^i}{1})$ is supported at $v(a)=2i-c_3$ and $v(m)=i-c_3$. As a function in $a$, it consists of level $c_3-i$ characters.
\end{enumerate}
\end{lem}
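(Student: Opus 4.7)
My plan is to treat the two cases in the statement using distinct models: the Kirillov model for the supercuspidal case and the Whittaker model for the principal series case. In the supercuspidal case the newform is $\charf_{1,0}$, so I can read the result directly from Proposition~\ref{supportofMCforSC} with $k=0$. In the principal series case I will use Lemma~\ref{Wiofram} with $k_1=k_2=c_3/2$ to get explicit Whittaker values $W^{(j)}$ and then integrate.

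\smallskip

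\emph{Supercuspidal case.} Taking $k=0$ in Proposition~\ref{supportofMCforSC} sends its three index ranges $c+k-1\le i\le c+k$, $0\le i<c+k-1$ with $i\neq c/2+k$, and $i=c/2+k$ to $i\in\{c_3-1,c_3\}$, $0\le i<c_3-1$ with $i\neq c_3/2$, and $i=c_3/2$. Part~(1) of the lemma is then the direct transcription of Proposition~\ref{supportofMCforSC}(i): the pointwise values (\ref{matrixcoefforsceq1}) give the three cases listed, and (\ref{matrixcoefforsceq2}) at $k=0$ gives (\ref{Phi3ramint}). Part~(3) is the sub-range $0\le i<c_3/2$ of Proposition~\ref{supportofMCforSC}(ii), where $\min\{0,2i-c_3\}=2i-c_3$, giving $v(a)=2i-c_3$, $v(m)=i-c_3$, and level $c_3-i$ in $a$. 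For Part~(2), the extremes $i\in\{c_3-1,c_3\}$ are absorbed by Part~(1); the intermediate range $c_3/2<i<c_3-1$ comes from (ii) with $\min\{0,2i-c_3\}=0$ (so $v(a)=0$), $v(m)=i-c_3\ge -c_3/2$, and level $c_3-i<c_3/2$; and the boundary case $i=c_3/2$ comes from (iii), whose weakened support $v(a)\ge 0$ is absorbed by the ``level $\le c_3/2$'' conclusion.

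\smallskip

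\emph{Principal series case.} Using the Whittaker pairing $<W_1,W_2>=\int_{\F^*}W_1(\zxz{\alpha}{0}{0}{1})\overline{W_2(\zxz{\alpha}{0}{0}{1})}\,d^*\alpha$ and the matrix identity
\[
\zxz{\alpha}{0}{0}{1}\zxz{a}{m}{0}{1}\zxz{1}{0}{\varpi^i}{1}=\zxz{1}{m\alpha}{0}{1}\zxz{a\alpha}{0}{0}{1}\zxz{1}{0}{\varpi^i}{1},
\]
I obtain
\[
\Phi_3(\zxz{a}{m}{0}{1}\zxz{1}{0}{\varpi^i}{1})=\int_{\F^*}\psi(m\alpha)\,W^{(i)}(a\alpha)\,\overline{W^{(0)}(\alpha)}\,d^*\alpha.
\]
For Part~(3) with $i<c_3/2$, I plug in Lemma~\ref{Wiofram}(i) for both $W^{(i)}(a\alpha)$ and $W^{(0)}(\alpha)$; the Gauss-sum structure forces a single valuation of $\alpha$, makes $W^{(i)}(a\alpha)$ a level-$(c_3-i)$ function of $a$, and the $\psi(m\alpha)$ twist pins down $v(m)$, giving the stated support and level. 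For Part~(2), cases (ii) and (iii) of Lemma~\ref{Wiofram} give the support and level bounds for $c_3/2<i\le c_3-2$ and $i=c_3/2$, and case (ii) for $i\in\{c_3-1,c_3\}$ gives the weaker statement absorbed by Part~(1). For Part~(1) itself, the closed-form expressions for $W^{(c_3)}$ and $W^{(c_3-1)}$ in Lemma~\ref{Wiofram}(ii) together with the integral identity (\ref{RamWint1}) substitute into the formula above and, after pairing against $\overline{W^{(0)}}$, produce the point values $1$ and $-1/(q-1)$ together with (\ref{Phi3ramint}).

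\smallskip

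The hard part will be verifying the exact numerical cancellations in the principal series subcase of Part~(1) that give the clean constants $\pm 1/(q-1)$: I need to check that the Gauss-sum normalization $C$ in Lemma~\ref{Wiofram}, essentially the local $\epsilon$-factor of $\mu_2$, combines correctly with the Gauss integrals making up $W^{(0)}$ so that only the trivial-character contribution survives after pairing. Everything else amounts to routine support-tracking via Lemma~\ref{Wiofram} and Propositions~\ref{Propofnmu}, \ref{supportofMCforSC}.
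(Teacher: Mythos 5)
Your supercuspidal case is handled exactly as the paper does, by specializing Proposition~\ref{supportofMCforSC} to $k=0$; that part is fine. The principal series case, however, has a substantive error in your central formula. You write
$$\Phi_3(\zxz{a}{m}{0}{1}\zxz{1}{0}{\varpi^i}{1})=\int_{\F^*}\psi(m\alpha)\,W^{(i)}(a\alpha)\,\overline{W^{(0)}(\alpha)}\,d^*\alpha,$$
but the second factor should be the Whittaker value of the newform at $\zxz{\alpha}{0}{0}{1}$, which in the paper's indexing is $W^{(c_3)}(\alpha)$, not $W^{(0)}(\alpha)$. The reason: the newform is right $K_1(\varpi^{c_3})$-invariant, and $\zxz{1}{0}{\varpi^{c_3}}{1}\in K_1(\varpi^{c_3})$, so $W(\zxz{\alpha}{0}{0}{1})=W(\zxz{\alpha}{0}{0}{1}\zxz{1}{0}{\varpi^{c_3}}{1})=W^{(c_3)}(\alpha)$. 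By Lemma~\ref{Wiofram}(ii) this is simply $\charf_{v(\alpha)=0}$, which is what makes the whole computation clean. In contrast, $W^{(0)}$ in the paper's notation is the $i=0<k_2$ stratum value from Lemma~\ref{Wiofram}(i), a genuine Gauss-sum expression. You explicitly propose to ``plug in Lemma~\ref{Wiofram}(i) for $W^{(0)}(\alpha)$,'' which confirms you are using the wrong object; the ``hard part'' you anticipate at the end --- checking Gauss-sum cancellations against the normalization $C$ --- is an artifact of this slip and does not exist in the correct computation. Once you replace $W^{(0)}$ by $W^{(c_3)}=\charf_{v(\alpha)=0}$, the integral collapses to $\int_{v(\alpha)=0}\psi(m\alpha)W^{(i)}(a\alpha)\,d^*\alpha$, and parts (1)--(3) follow by routine support- and level-tracking using Lemma~\ref{Wiofram}(i)--(iii), exactly as in the paper.
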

\begin{proof}
When $\pi_3$ is supercuspidal, the above results follow directly from (actually is weaker than) Proposition \ref{supportofMCforSC}. When $\pi_3$ is of the form $\pi(\mu_1, \mu_2)$ where $\mu_i$ are both of level $c_3/2$, the claims basically follow from Lemma \ref{Wiofram}. By definition
\begin{align}\label{Phi3formula}
\Phi_3(\zxz{a}{m}{0}{1}\zxz{1}{0}{\varpi^{i}}{1})&=\int\psi(m\alpha)W^{(i)}(a\alpha)\overline{W^{(c_3)}(\alpha)}d^*\alpha\\
&=\int\limits_{v(\alpha)=0}\psi(m\alpha)W^{(i)}(a\alpha)d^*\alpha.\notag
\end{align}
The special values and the special integral just follow from (ii) of Lemma \ref{Wiofram}. As an example, we will prove (\ref{Phi3ramint}). When $v(a)=0$ and $i=c_3-1$,
\begin{align}
&\text{\ \ \ }\int\limits_{v(m)=-1}\Phi_3(\zxz{a}{m}{0}{1}\zxz{1}{0}{\varpi^{c_3-1}}{1})dm=\int\limits_{v(\alpha)=0}\int\limits_{v(m)=-1}\psi(m\alpha)W^{(c_3-1)}(a\alpha)dm d^*\alpha \\
&=-\int\limits_{v(\alpha)=0}W^{(c_3-1)}(a\alpha) d^*\alpha=-\int\limits_{v(\alpha)=0}W^{(c_3-1)}(\alpha) d^*\alpha=\frac{1}{q-1}.\notag
\end{align}
\vspace{0.5cm}

Now to prove (2), suppose $v(m)<-c_3/2$ in (\ref{Phi3formula}). Then $\psi(m\alpha)$ is of level $\geq c_3/2+1$ in $\alpha$. But one can check explicitly from (ii) and (iii) of Lemma \ref{Wiofram} that $W^{(i)}$ is of level $\leq c_3/2$ in $\alpha$.
For example, when $i=c_3/2$,
\begin{align}
&\text{\ \ \ } W^{(\frac{c_3}{2})}(a\alpha)\\
&=C^{-1}\int\limits_{v(u)\leq -\frac{c_3}{2},u\notin \varpi^{-\frac{c_3}{2}}(-1+\varpi O_F)}\mu_1(-\frac{\varpi^{\frac{c_3}{2}}}{1+u\varpi^{\frac{c_3}{2}}})\mu_2(-a\alpha u)|\frac{\varpi^{\frac{c_3}{2}}}{a\alpha u(1+u\varpi^{\frac{c_3}{2}})}|^{1/2}\psi(-a\alpha u)q^{-v(a\alpha)}du.\notag
\end{align}
As functions in $u$, $\mu_1(-\frac{\varpi^{\frac{c_3}{2}}}{1+u\varpi^{\frac{c_3}{2}}})\mu_2(-a\alpha u)$ is of level $\leq c_3/2$, $\psi(a\alpha u)$ if of level $-v(a\alpha u)$. Then $v(a\alpha u)\geq -c_3/2$ for the integral in $u$ to be nonzero. Then the level of $W^{(i)}(a\alpha)$ in $\alpha$ (and also in $a$) is $\leq c_3/2$. Then (\ref{Phi3formula}) has to be zero as it's
 the integral of product of level$\geq c_3/2+1$ components with level$\leq c_3/2$ components. One can also see from this argument that the level of $\Phi_3$ in $a$ is $\leq c_3/2$.
 
To find the level 0 component of $\Phi_3(\zxz{a}{m}{0}{1}\zxz{1}{0}{\varpi^{i}}{1})$ in $a$ is equivalent to find the level 0 component in $W^{(i)}$, which only occurs when $i=c_3$ or $i=c_3-1$ from Lemma \ref{Wiofram}.

\vspace{0.5cm}
(3) follows from (i) of Lemma \ref{Wiofram}: when $i<c_3/2$,
\begin{equation}
W^{(i)}(a\alpha)=C^{-1}\int\limits_{u\in O_F^*}\mu_1(-\frac{\varpi^i}{u})\mu_2(a\alpha\varpi^{-i}(1-\varpi^{\frac{c_3}{2}-i}u))q^{v(a\alpha)/2-i}\psi(a\alpha\varpi^{-i}(1-\varpi^{\frac{c_3}{2}-i}u))q^{2i-\frac{c_3}{2}-v(a\alpha)}du.
\end{equation}
As functions in $u$, $\mu_1(-\frac{\varpi^i}{u})$ is multiplicative of level $c_3/2$, $\mu_2(a\alpha\varpi^{-i}(1-\varpi^{\frac{c_3}{2}-i}u))$ is of level $i<c_3/2$, $\psi(a\alpha\varpi^{-i}(1-\varpi^{\frac{c_3}{2}-i}u))$ is of level $2i-c_3/2-v(a\alpha)$. As $v(\alpha)=0$ in (\ref{Phi3formula}), then $W^{(i)}(a\alpha)$ is not zero only when $v(a)=2i-c_3$. We will assume this for the remaining discussions.

As functions in $\alpha$, $\mu_2(a\alpha\varpi^{-i}(1-\varpi^{\frac{c_3}{2}-i}u))$ is of level $c_3/2$, and $\psi(a\alpha\varpi^{-i}(1-\varpi^{\frac{c_3}{2}-i}u))$ is of level $i-v(a\alpha)=c_3-i>c_3/2$. Then $W^{(i)}(a\alpha)$ as a function in $\alpha$ is of level $c_3-i$. Thus for the integral in (\ref{Phi3formula}) to be vanishing, we require $v(m)=i-c_3$. 

In this argument, one can easily see that $\Phi_3(\zxz{a}{m}{0}{1}\zxz{1}{0}{\varpi^{i}}{1})$ as a function in $a$ for $i<c_3/2$ is of level $c_3-i$.
\end{proof}

Now we can explain the strategy to prove Theorem \ref{thmlocalint}. As we mentioned earlier, we will add up the integrals on the double cosets of the form
$$\zxz{a}{m}{0}{1}\zxz{1}{0}{\varpi^i}{1}K_1(\varpi^{c_3})$$
for $0\leq i\leq c_3$. We will show that the nonzero contribution will only come from $i=c_3$ and $i=c_3-1$, where we know special values or integrals for $\Phi_3$. In particular, we will prove the following two claims about $\Phi_1$ and $\Phi_2$ for various types of representations:

\begin{claim1}
\begin{enumerate}
\item[(1)]When $i\geq c_3/2$, $\Phi_1(\zxz{a}{m}{0}{1}\zxz{1}{0}{\varpi^i}{1})$ is of level 0 in $a$ for fixed valuations. In particular we have the following special values:
$$\Phi_1(\zxz{a}{m}{0}{1}\zxz{1}{0}{\varpi^i}{1})=\begin{cases}
1, &\text{\ if\ }v(a)=0 \text{\ and\ }v(m)\geq 0\\
A, &\text{\ if\ }v(a)=0 \text{\ and\ }v(m)=-1.
\end{cases}$$
\item[(2)]When $i<c_3/2$, $v(a)=2i-c_3$ and $v(m)=i-c_3$, $\Phi_1(\zxz{a}{m}{0}{1}\zxz{1}{0}{\varpi^i}{1})$ as a function in $a$ is of level $\leq c_1<c_3-i$.
\end{enumerate}
\end{claim1}

\begin{rem}
This claim should be clear by intuition. When $i\geq c_3/2\geq c_1$, $\Phi_1(\zxz{a}{m}{0}{1}\zxz{1}{0}{\varpi^i}{1})=\Phi_1(\zxz{a}{m}{0}{1})$. Its special value at $v(a)=0$ and $v(m)\geq 0$ is just a matter of normalization.
\end{rem}

\begin{claim2}
\begin{enumerate}
\item[(1)]For $i\geq c_3/2$, and $v(m)\geq -c_3/2$, $\Phi_2(\zxz{a}{m}{0}{1}\zxz{1}{0}{\varpi^i}{1})$ is 
of level 0 as a function in $a$ and independent of $m$. When $i=c_3$, $v(a)=0$ and $v(m)\geq -c_3/2$, 
\begin{equation}
\Phi_2(\zxz{a}{m}{0}{1})=1.
\end{equation}
When $i=c_3-1$, $v(a)=0$ and $v(m)\geq -c_3/2$, 
\begin{equation}
\Phi_2(\zxz{a}{m}{0}{1}\zxz{1}{0}{\varpi^{c_3-1}}{1})=B.
\end{equation}

\item[(2)]For $i<c_3/2$, $v(a)=2i-c_3$ and $v(m)=i-c_3$, $\Phi_2(\zxz{a}{m}{0}{1}\zxz{1}{0}{\varpi^i}{1})$ is 
of level $\leq c_2<c_3-i$ as a function in $a$. 
\end{enumerate}
\end{claim2}
\begin{rem}
 Again the special value when $i=c_3$, $v(a)=0$ and $v(m)\geq -c_3/2$ is just a matter of normalization.
\end{rem}

Now suppose that these two claims are always true for all three types of representations. When $i<c_3-1$, the level in $a$ of $\Phi_1$ and $\Phi_2$ is strictly less than that of $\Phi_3$.  So indeed the only nonzero contribution to the final integral will come from $i=c_3$ and $i=c_3-1$. One can then have the following tables of values:

\vspace{0.5cm}

\begin{tabular}{|p{3cm}|p{3cm}|p{3cm}|p{3cm}|}
\hline
$v(a)$ always 0	&$\Phi_1$	&$\Phi_2$	&$\Phi_3$\\
\hline
$i=c_3,v(m)\geq 0$	&1	&1	&1\\
\hline
$i=c_3,v(m)=-1$	&A	&1	&$-\frac{1}{q-1}$\\
\hline
$i=c_3-1,v(m)\geq 0$	&1	&B	&$-\frac{1}{q-1}$\\
\hline
$i=c_3-1,v(m)=-1$	&A	&B	&satisfying (\ref{Phi3ramint})\\
\hline
\end{tabular}

\vspace{0.5cm}

Then by Lemma \ref{localintcoefficient}, one can easily compute that
\begin{align}
&\int\limits_{\F^*\backslash\GL_2(\F)}\Phi_1(g)\Phi_2(g)\Phi_3(g)dg\\
=&\frac{1}{(q+1)q^{c_3-1}}[1+(q-1)A(-\frac{1}{q-1})]+\frac{q-1}{(q+1)q^{c_3-1}}[B(-\frac{1}{q-1})+AB\frac{1}{q-1}]\notag\\
=&\frac{(1-A)(1-B)}{(q+1)q^{c_3-1}}.\notag
\end{align}
So the theorem will be proved if we can verify Claim 1 and Claim 2 for various types of representations. We will do this in the remaining of this section. Before that, let's give the formulae for $\Phi_1$ and $\Phi_2$ more explicitly in Whittaker functionals. Let $W_i$ be the corresponding Whittaker functionals for the normalized newforms $f_i$.
$\Phi_1$ is right $K_1(\varpi^{c_1})-$invariant, and thus automatically $K_1(\varpi^{c_3})-$invariant. When $i\geq c_1$,
\begin{equation}\label{Phi1_1}
\Phi_1(\zxz{a}{m}{0}{1}\zxz{1}{0}{\varpi^i}{1})=\int\limits_{\alpha}\psi(m\alpha) W_1^{(c_1)}(a\alpha)\overline{W_1^{(c_1)}(\alpha)}d^*\alpha.
\end{equation}
When $i<c_1$,
\begin{equation}\label{Phi1_2}
\Phi_1(\zxz{a}{m}{0}{1}\zxz{1}{0}{\varpi^i}{1})=\int\limits_{\alpha}\psi(m\alpha) W_1^{(i)}(a\alpha)\overline{W_1^{(c_1)}(\alpha)}d^*\alpha.
\end{equation}

Now for $\Phi_2$, by definition , we have
\begin{equation}
\Phi_2(g)=<\pi_2(g\zxz{\varpi^{-c_3+c_2}}{0}{0}{1})f_2,\pi_2(\zxz{\varpi^{-c_3+c_2}}{0}{0}{1})f_2>=<\pi_2(\zxz{\varpi^{c_3-c_2}}{0}{0}{1}g\zxz{\varpi^{-c_3+c_2}}{0}{0}{1})f_2,f_2>.
\end{equation}

It is again right $K_1(\varpi^{c_3})-$invariant as 

$$\zxz{\varpi^{c_3-c_2}}{0}{0}{1}K_1(\varpi^{c_3})\zxz{\varpi^{-c_3+c_2}}{0}{0}{1}\subset K_1(\varpi^{c_2}),$$

and $f_2$ is $K_1(\varpi^{c_2})-$invariant. 

Now if $i\geq c_3-c_2$,
\begin{equation*}
 \zxz{\varpi^{c_3-c_2}}{0}{0}{1}\zxz{a}{m}{0}{1}\zxz{1}{0}{\varpi^i}{1}\zxz{\varpi^{-c_3+c_2}}{0}{0}{1}=\zxz{a}{m\varpi^{c_3-c_2}}{0}{1}\zxz{1}{0}{\varpi^{i-c_3+c_2}}{1},
\end{equation*}
and
\begin{equation}\label{Phi2_1}
 \Phi_2(\zxz{a}{m}{0}{1}\zxz{1}{0}{\varpi^i}{1})=
 \int  \psi(m\varpi^{c_3-c_2}\alpha )W_2^{(i-c_3+c_2)}(a\alpha)\overline{W_2^{(c_2)}(\alpha)} d^*\alpha.
\end{equation}

If $i<c_3-c_2$,
\begin{align}
&\text{\ \ \ }\zxz{\varpi^{c_3-c_2}}{0}{0}{1}\zxz{a}{m}{0}{1}\zxz{1}{0}{\varpi^i}{1}\zxz{\varpi^{-c_3+c_2}}{0}{0}{1}\notag\\
&=\varpi^{i-c_3+c_2}\zxz{a\varpi^{-2i+2c_3-2c_2}}{a(\varpi^{-i+c_3-c_2}-\varpi^{-2i+2c_3-2c_2})+m\varpi^{c_3-c_2}}{0}{1}\zxz{1}{0}{1}{1}\zxz{1}{-1+\varpi^{-i+c_3-c_2}}{0}{1},\notag
\end{align}
and
\begin{align}\label{Phi2_2}
&\Phi_2(\zxz{a}{m}{0}{1}\zxz{1}{0}{\varpi^i}{1})\\
=&w_{\pi_2}(\varpi^{i-c_3+c_2})\int\psi((a(\varpi^{-i+c_3-c_2}-\varpi^{-2i+2c_3-2c_2})+m\varpi^{c_3-c_2})\alpha)W_2^{(0)}(a\varpi^{-2i+2c_3-2c_2}\alpha)\overline{W_2^{(c_2)}(\alpha)}d^*\alpha.\notag
\end{align}

Now we can actually reduce many parts of Claim 1 and Claim 2 to the following simple lemma:
\begin{lem}
 Let $\pi$ be a unitary local representation of $\GL_2$. Let $W$ be a Whittaker functional associated to a newform in $\pi$. Then 
 $W(\zxz{\alpha}{0}{0}{1})$ is of level 0 in $\alpha$ and supported on $v(\alpha)\geq 0$.
\end{lem}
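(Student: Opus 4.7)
The plan is to split the statement into the two assertions (level zero and support on $v(\alpha) \ge 0$) and handle them separately: the level-zero part falls out cheaply from the $K_1(\varpi^c)$-invariance of the newform, while the support statement will be verified case by case using the explicit formulas of Subsections 2.4 and 2.5 that have already been collected.

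For the level-zero statement, I would first observe that for any unit $u \in O^*$ the diagonal matrix $\zxz{u}{0}{0}{1}$ lies in $K_1(\varpi^c)$, since it is in $K_v$ and its bottom row is $(0,1)$. Because the Whittaker function $W = W_\varphi$ attached to the newform $\varphi$ is right $K_1(\varpi^c)$-invariant (this invariance is inherited from $\varphi$ under the standard intertwining $\varphi \mapsto W_\varphi$), the identity
\begin{equation*}
W\bigl(\zxz{u\alpha}{0}{0}{1}\bigr) \;=\; W\Bigl(\zxz{\alpha}{0}{0}{1}\zxz{u}{0}{0}{1}\Bigr) \;=\; W\bigl(\zxz{\alpha}{0}{0}{1}\bigr)
\end{equation*}
shows that $W(\zxz{\alpha}{0}{0}{1})$ depends only on $v(\alpha)$, i.e.\ it is of level zero.

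For the support claim I would run through the classification listed at the start of Subsection 2.4, invoking in each case the formula that the paper has already tabulated for the newform. When $\pi$ is supercuspidal, the newform in the Kirillov model is $\charf_{1,0}$, and by definition $W(\zxz{\alpha}{0}{0}{1}) = \charf_{1,0}(\alpha)$, which is supported (in fact) on $v(\alpha)=0$; this also recovers Corollary \ref{Wiofsc}(1). When $\pi = \pi(\mu_1,\mu_2)$ with both $\mu_i$ ramified, Lemma \ref{Wiofram}(ii) applied at $i=c$ gives $W^{(c)}(\alpha) = \charf_{v(\alpha)=0}$, so the support is again $v(\alpha)=0$. When $\pi = \pi(\mu_1,\mu_2)$ with $\mu_1$ unramified and $\mu_2$ ramified of level $k$, Lemma \ref{Wiofbiasram}(1) with $i=k$ shows $W^{(k)}(\alpha)$ vanishes for $v(\alpha)<0$. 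The remaining cases are $\pi$ unramified and $\pi$ special (unramified or with ramified twist); both reduce to a direct Whittaker-functional calculation using \eqref{computeWhit} applied to the spherical (resp.\ newform) section on $BK_v$, giving the standard Casselman--Shalika-type formula which vanishes for $v(\alpha)<0$.

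The main obstacle, such as it is, lies only in the bookkeeping: one must check that every type of unitary local representation of $\GL_2$ has actually been accounted for (in particular the special-representation cases, which are not explicitly treated in Subsection 2.4 but which can be handled by the same argument as the corresponding principal-series case using \eqref{computeWhit}), and one must be careful that the normalization of the newform is compatible across cases so that the cited lemmas apply verbatim. Since the support statement in each case follows at once from the explicit formula, and the level-zero statement is uniform, no genuine analytic difficulty is expected.
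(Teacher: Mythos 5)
Your proof is correct, and it refines the paper's argument in one half. For the support statement ($v(\alpha)\geq 0$) you proceed exactly as the paper does: case by case, reading off the explicit newform Whittaker values from Lemma \ref{Wiofram}(ii), Lemma \ref{Wiofbiasram}(1), Corollary \ref{Wiofsc}(1), the standard unramified formula, and the formula for the special representation. Where you improve on the paper is the level-zero assertion: rather than also extracting this case by case, you observe that for $u\in O_F^*$ the matrix $\zxz{u}{0}{0}{1}$ lies in $K_1(\varpi^c)$, so right $K_1(\varpi^c)$-invariance of the newform's Whittaker function gives $W(\zxz{u\alpha}{0}{0}{1})=W(\zxz{\alpha}{0}{0}{1})$ for all units $u$, i.e. dependence on $v(\alpha)$ only — a uniform, classification-free argument. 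This is a genuine simplification of that half of the lemma; the paper instead bundles both assertions into the citation of the explicit formulas. One small point of care you already flagged: the special representations with ramified twist are not separately tabulated in Subsections 2.4--2.5, but the paper notes they can be treated like the second case of Type 1, and your uniform level-zero argument sidesteps any normalization worries for that half of the claim.
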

\begin{proof}
 The claim follows directly from Lemma \ref{Wiofram}, Lemma \ref{Wiofbiasram} and Corollary \ref{Wiofsc} for Type 1 and Type 3. It's also well know for unramified representations. For special unramified representations $\pi\cong \sigma(\mu|\cdot|^{1/2},\mu|\cdot|^{-1/2})$ where $\mu$ is unramified, one can see, for example, \cite{YH13}. There I proved that the Whittaker functional
 associated to a newform satisfies the following formula:
$$ W(\zxz{\alpha}{0}{0}{1})=\begin{cases}
   \mu(\alpha)q^{-v(\alpha)}, &\text{\ if \ }v(\alpha)\geq 0;\\
                            0,&\text{\ otherwise}.
                            \end{cases}$$

\end{proof}
Now let's prove parts of Claim 1 and Claim 2 without computing $A$ and $B$ explicitly. First for part (1) of Claim 1, let $i\geq c_1$. Then
\begin{equation}
\Phi_1(\zxz{a}{m}{0}{1}\zxz{1}{0}{\varpi^i}{1})=\int\limits_{\alpha}\psi(m\alpha) W_1^{(c_1)}(a\alpha)\overline{W_1^{(c_1)}(\alpha)}d^*\alpha.
\end{equation}
Both of $W_1^{(c_1)}(a\alpha)$ and $\overline{W_1^{(c_1)}(\alpha)}$ are of level 0 in $\alpha$ because of the lemma above. Then the result should be of level 0 in $a$. One can make a change of variable to see $A$ is well-defined.

Now we consider part (1) of Claim 2. If $v(m)\geq -c_3/2$, then $v(m\varpi^{c_3-c_2}\alpha)\geq 0$ for $v(\alpha)\geq 0$. When $i\geq c_3-c_2$,
\begin{align}\label{Phi2_1_1}
 \Phi_2(\zxz{a}{m}{0}{1}\zxz{1}{0}{\varpi^i}{1})=&
 \int  \psi(m\varpi^{c_3-c_2}\alpha )W_2^{(i-c_3+c_2)}(a\alpha)\overline{W_2^{(c_2)}(\alpha)} d^*\alpha\\
 &=\int W_2^{(i-c_3+c_2)}(a\alpha)\overline{W_2^{(c_2)}(\alpha)} d^*\alpha.\notag
\end{align}
This is to find level 0 components of $W_2^{(i-c_3+c_2)}$, and should be of level 0 in $a$. It's also clearly independent of $m$. That's why $B$ is well-defined.

When $c_3/2\leq i<c_3-c_2$, 
\begin{align}
&\Phi_2(\zxz{a}{m}{0}{1}\zxz{1}{0}{\varpi^i}{1})\\
=&w_{\pi_2}(\varpi^{i-c_3+c_2})\int\psi((a(\varpi^{-i+c_3-c_2}-\varpi^{-2i+2c_3-2c_2})+m\varpi^{c_3-c_2})\alpha)W_2^{(0)}(a\varpi^{-2i+2c_3-2c_2}\alpha)\overline{W_2^{(c_2)}(\alpha)}d^*\alpha\notag\\
=&w_{\pi_2}(\varpi^{i-c_3+c_2})\int\psi(a(\varpi^{-i+c_3-c_2}-\varpi^{-2i+2c_3-2c_2})\alpha)W_2^{(0)}(a\varpi^{-2i+2c_3-2c_2}\alpha)\overline{W_2^{(c_2)}(\alpha)}d^*\alpha.\notag\\
\end{align}
Again this integral is to find level 0 components of $\psi(a(\varpi^{-i+c_3-c_2}-\varpi^{-2i+2c_3-2c_2})\alpha)W_2^{(0)}(a\varpi^{-2i+2c_3-2c_2}\alpha)$ and should be of level 0 in $a$ and independent of $m$.

One can also prove (2) of Claim 2 without referring to any specifc type of representation. When $i<c_3/2$, $v(a)=2i-c_3$ and $v(m)=i-c_3$, we know
\begin{align}\label{Phi2_2_1}
\Phi_2(\zxz{a}{m}{0}{1}\zxz{1}{0}{\varpi^i}{1})=w_{\pi_2}(\varpi^{i-c_3+c_2})\int\limits_{v(\alpha)\geq 0}&\psi(m\varpi^{c_3-c_2}\alpha)\psi((\varpi^{-i+c_3-c_2}-\varpi^{-2i+2c_3-2c_2})a\alpha)\\
&W_2^{(0)}(\varpi^{-2i+2c_3-2c_2}a\alpha)\overline{W^{(c_2)}(\alpha)}d^*\alpha.\notag
\end{align}
By the previous lemma, $\overline{W^{(c_2)}(\alpha)}$ is of level 0 in $\alpha$, and $v(\alpha)\geq 0$ in the integral. So $v(m\varpi^{c_3-c_2}\alpha)\geq i-c_3+c_3-c_2=i-c_2\geq -c_2$, which means $\psi(m\varpi^{c_3-c_2}\alpha)$ is of level$\leq c_2$ in $\alpha$.
Now note that if $\psi((\varpi^{-i+c_3-c_2}-\varpi^{-2i+2c_3-2c_2})a\alpha)W_2^{(0)}(\varpi^{-2i+2c_3-2c_2}a\alpha)$ has a component of level $j$ in $\alpha$, then this component is also of level $j$ in $a$. As only those component of level$\leq c_2$ will
be detected by the integral, $\Phi_2(\zxz{a}{m}{0}{1}\zxz{1}{0}{\varpi^i}{1})$ can only be of level$\leq c_2$ in $a$. Another way to argue this is just to do a change of variable for the integral.

For these reasons, we will only need to compute $A$ and $B$ explicitly and verify part (2) of Claim 1 for various types of unitary representations.

\subsection{Type 1 occuring}

First let's consider supercuspidal representations. When $\pi_1$ is supercuspidal, part (2) of Claim 1 follows directly from Proposition \ref{supportofMCforSC} for $k=0$ there and one can also easily see that $A=-\frac{1}{q-1}$.

When $\pi_2$ is supercuspidal, take $k=c_3-c_2$ in Proposition \ref{supportofMCforSC}. Note $\pi_2(\zxz{\varpi^{-c_3+c_2}}{0}{0}{1})\charf_{1,0}=\charf_{1,c_3-c_2}$, and thus
$$\Phi_2(g)=<\pi_2(g)\charf_{1,c_3-c_2},\charf_{1,c_3-c_2}>.$$
So again we can apply Proposition \ref{supportofMCforSC} and get $B=-\frac{1}{q-1}$. 



Now suppose $\pi_1$ is of form $\pi(\chi_1,\chi_2)$, where $\chi_1$ and $\chi_2$ are both ramified. For any $m$ with $v(m)=-1$,
\begin{equation}
\Phi_1(\zxz{1}{m}{0}{1})=\int\limits_{v(\alpha)=0}\psi(m\alpha)d^*\alpha=-\frac{1}{q-1}.
\end{equation}
This is the value of $A$.
When $i<c_1$, the proof of (2) of Claim 1 is actually similar to the proof of Lemma \ref{lemofPhi3}. We will leave this to the readers.

Now let $\pi_2$ be of form $\pi(\chi_1,\chi_2)$, where $\chi_1$ and $\chi_2$ are both ramified. By formula (\ref{Phi2_1_1}), we have the following for $v(a)=0$, $i=c_3-1$ and $v(m)\geq -c_3/2$:
\begin{align}
 \Phi_2(\zxz{a}{m}{0}{1}\zxz{1}{0}{\varpi^i}{1})=\int\limits_{v(\alpha)=0} W_2^{(c_2-1)}(\alpha) d^*\alpha=-\frac{1}{q-1}.
\end{align}
This is the value $B$.


\subsection{Type 2 occuring}
In this subsection we will consider unramified and special unramified representations. We first recall the existing work of matrix coefficients for these representations.
For unramified representations, just recall Lemma \ref{lemofMCforUNram}.

For special unramified representations, let $\sigma_n=\zxz{\varpi^n}{0}{0}{1}$, and $\omega=\zxz{0}{1}{-1}{0}$. 
\begin{lem}\label{lemofMCforSpecial}
 Let $\pi=\sigma(\chi|\cdot|^{1/2},\chi|\cdot|^{-1/2})$ be a special unramified unitary representation of $\GL_2$. It has a normalized $K_1(\varpi)-$invariant newform. The associated matrix coefficient $\Phi$ for this newform is bi-$K_1(\varpi)-$invariant and can be given in the following table for double $K_1(\varpi)-$cosets:

 \begin{tabular}{|p{2cm}|p{2cm}|p{2cm}|p{2cm}|p{2cm}|p{2cm}|}
  \hline
  g	&$\omega$	&$\sigma_n$	&$\omega\sigma_n$	&$\sigma_n\omega$	&$\omega\sigma_n\omega$\\\hline
  $\Phi(g)$	&$-q^{-1}$	&$\chi^nq^{-n}$	&$-\chi^nq^{1-n}$ &$-\chi^nq^{-1-n}$	&$\chi^nq^{-n}$	\\\hline	
 \end{tabular}

 In this table $n\geq 1$ and $\Phi(1)=1$ is not listed.
\end{lem}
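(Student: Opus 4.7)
The plan is to reduce the computation to the ordinary Steinberg representation by character twisting, then verify the Steinberg values using the Whittaker/Kirillov model recalled in the previous subsection. Since $\chi$ is unramified unitary, one has $\pi = \mathrm{St}\otimes\chi$, so matrix coefficients transform as $\Phi_\pi(g) = \chi(\det g)\,\Phi_{\mathrm{St}}(g)$. From $\det\omega=1$ and $\det\sigma_n = \det(\omega\sigma_n) = \det(\sigma_n\omega) = \det(\omega\sigma_n\omega) = \varpi^n$, the $\chi^n$ factors in each row of the table are produced automatically, and it suffices to check the pure $q$-powers for $\chi\equiv 1$. Bi-$K_1(\varpi)$-invariance of $\Phi$ comes for free: the newform is right $K_1(\varpi)$-invariant by construction, and left-invariance follows from unitarity via $\langle\pi(k)u,v\rangle = \langle u,\pi(k^{-1})v\rangle$ combined with $\pi(k^{-1})v=v$.

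For the diagonal entries $g=\sigma_n$ with $n\geq 1$, I would plug the Whittaker value $W(\mathrm{diag}(\alpha,1)) = \chi(\alpha)q^{-v(\alpha)}\charf_{v(\alpha)\geq 0}$ (recalled just above from \cite{YH13}) into the unitary pairing $\langle W_1,W_2\rangle = \int_{F^*} W_1\overline{W_2}\,d^*\alpha$. Since $\pi(\sigma_n)W(\mathrm{diag}(\alpha,1)) = W(\mathrm{diag}(\alpha\varpi^n,1))$, the ratio collapses to a geometric series and gives
\begin{equation*}
\Phi(\sigma_n) \;=\; \chi^n q^{-n}\cdot\frac{\sum_{k\geq 0}q^{-2k}}{\sum_{k\geq 0}q^{-2k}} \;=\; \chi^n q^{-n}.
\end{equation*}
For $\Phi(\omega)$ I would use the Kirillov action formula \eqref{singleaction}; the central character $\chi^2$ is unramified, so the constants $C_\nu, z_0, n_\nu$ attached to $\pi$ are completely explicit, and pairing $\pi(\omega)\charf_{1,0}$ against the newform yields $\Phi(\omega) = -q^{-1}$.

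The remaining four entries come from combining group-theoretic identities with bi-$K_1(\varpi)$-invariance. The identity $\omega\sigma_n\omega = -\varpi^n\sigma_{-n}$, together with the central-character relation $\pi(-\varpi^n g) = \chi^{2n}\pi(g)$ (using $\chi(-1)=1$ since $\chi$ is unramified) and the unitarity $\Phi(g^{-1}) = \overline{\Phi(g)}$, immediately gives $\Phi(\omega\sigma_n\omega) = \chi^{2n}\overline{\Phi(\sigma_n)} = \chi^n q^{-n}$. For $\sigma_n\omega$ and $\omega\sigma_n$, the Iwasawa decomposition of Lemma \ref{Iwasawadecomp} with $c=1$ writes each as an element of $B$ times $\zxz{1}{0}{\varpi}{1}$ times an element of $K_1(\varpi)$; combining bi-$K_1(\varpi)$-invariance with the known diagonal values produces the remaining factors $-\chi^n q^{1-n}$ and $-\chi^n q^{-1-n}$. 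I expect the main obstacle to be the bookkeeping of signs and $q$-powers when decomposing $\omega$-translates of the newform in the Kirillov model; a technically cleaner alternative would be to work directly in the induced model $\mathrm{Ind}_B^G(\chi|\cdot|^{1/2},\chi|\cdot|^{-1/2})$, where the Steinberg subrepresentation is spanned by a difference of two $K_1(\varpi)$-fixed vectors and the matrix coefficient reduces to a finite sum over coset representatives of $K/K_1(\varpi)$.
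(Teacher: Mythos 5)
The paper's own justification of this lemma is a one-line citation to \cite{MW12}, so any actual computation here is a genuine addition. Your reduction to Steinberg via $\Phi_\pi(g)=\chi(\det g)\Phi_{\mathrm{St}}(g)$, the bi-$K_1(\varpi)$-invariance argument, the Whittaker-model computation of $\Phi(\sigma_n)=\chi^nq^{-n}$, and the deduction of $\Phi(\omega\sigma_n\omega)$ from $\omega\sigma_n\omega=-\varpi^n\sigma_{-n}$ together with $\Phi(g^{-1})=\overline{\Phi(g)}$ are all correct.

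The derivation of the columns $\omega\sigma_n$ and $\sigma_n\omega$, however, does not work. With $c=1$ the element $\zxz{1}{0}{\varpi}{1}$ already lies in $K_1(\varpi)$, so the cell $B\zxz{1}{0}{\varpi}{1}K_1(\varpi)$ is just $BK_1(\varpi)$, the cell containing $\sigma_n$; the elements $\omega\sigma_n$ and $\sigma_n\omega$ instead lie in the other Iwasawa cell $B\zxz{1}{0}{1}{1}K_1(\varpi)$. More to the point, $\Phi$ is bi-$K_1(\varpi)$-invariant but not left-$B$-invariant, so writing $\omega\sigma_n=b\zxz{1}{0}{1}{1}k$ with $b\in B$, $k\in K_1(\varpi)$ does not reduce $\Phi(\omega\sigma_n)$ to a diagonal value. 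Indeed $\sigma_n$, $\omega\sigma_n$, and $\sigma_n\omega$ sit in three distinct double $K_1(\varpi)$-cosets (the three table entries differ), so these values carry independent information and must be computed. The correct route is the one you mention only as an afterthought: either compute $\pi(\omega)W$ honestly in the Kirillov model and pair, or work in the induced model with the finite sum over $K/K_1(\varpi)$. I would also flag that invoking formula \eqref{singleaction} for $\Phi(\omega)$ needs care: that formula is stated for supercuspidals acting on $S(\F^*)$, whereas the Steinberg newform is the infinite sum $\sum_{n\geq 0}\chi^nq^{-n}\charf_{1,n}$, so termwise application requires justification (it does work, but this should be said). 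As written, then, three of the five nontrivial table entries are unproved, and the sketch needs to actually carry out one of the two model computations you propose to close the gap.
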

This result is due to \cite{MW12}.

Now we consider unramified representations. Part (2) of Claim 1 is actually automatic in this case as $\Phi_1$ is $K-$invariant.
Let's figure out $A$ and $B$ values using Lemma \ref{lemofMCforUNram}.
Let $\Phi$ be the matrix coefficient as defined in Lemma \ref{lemofMCforUNram}, and $v(m)=-1$. Since $$\zxz{1}{m}{0}{1}=\zxz{1}{0}{1/m}{1}\zxz{m}{0}{0}{1/m}\zxz{1/m}{1}{-1}{0},$$
we have 
\begin{align}
 \Phi_1(\zxz{1}{m}{0}{1})&=\Phi_1(\zxz{m}{0}{0}{1/m})=w^{-1}_{\pi_1}\Phi(\sigma_2)\\
 &= \frac{1}{\chi_1\chi_2}(\frac{q^{-1}}{1+q^{-1}}\frac{\chi_1^2(\chi_1-\chi_2q^{-1})-\chi_2^2(\chi_2-\chi_1q^{-1})}{\chi_1-\chi_2})\notag\\
 &=\frac{1}{q+1}(\frac{\chi_1}{\chi_2}+\frac{\chi_2}{\chi_1}+1-q^{-1}).\notag
\end{align}
This is the value $A$.

On the other hand,

$$\zxz{\varpi^{c_3}}{0}{0}{1}\zxz{1}{0}{\varpi^{c_3-1}}{1}\zxz{\varpi^{-c_3}}{0}{0}{1}=\zxz{1}{0}{\varpi^{-1}}{1}=-\omega\zxz{1}{-\varpi^{-1}}{0}{1}\omega.$$
So we can similarly show that $B=\frac{1}{q+1}(\frac{\eta_1}{\eta_2}+\frac{\eta_2}{\eta_1}+1-q^{-1})$ if $\pi_2\cong \pi(\eta_1,\eta_2)$.


 
Now let $\pi_1$ be a special unramified representation of form $\sigma(\chi|\cdot|^{1/2},\chi|\cdot|^{-1/2})$, and let $\Phi$ be the matrix coefficient as given in Lemma \ref{lemofMCforSpecial}.
When $v(m)=-1$, $$\zxz{1}{m}{0}{1}=-\zxz{1}{0}{1/m}{1}\omega\zxz{1/m}{0}{0}{m}\zxz{1}{0}{1/m}{1},$$
So $$A=\Phi(\omega\zxz{1/m}{0}{0}{m})=w^{-1}_{\pi_1}\Phi(\omega\sigma_2)=\frac{1}{\chi^2}(-\chi^2q^{-1})=-q^{-1}.$$

To check (2) of Claim 1, we just need to show that when $i=0$ and $v(m)=v(a)=-c_3$, $\Phi_1(\zxz{a}{m}{0}{1}\zxz{1}{0}{1}{1})$ is at most level 1.
If $v(a+m)>v(m)$, we have 
$$\zxz{a}{m}{0}{1}\zxz{1}{0}{1}{1}=\omega\zxz{a/m}{0}{0}{m}\zxz{1}{-m/a}{0}{1}\zxz{1}{0}{\frac{a+m}{m}}{1},$$
so
\begin{equation}
\Phi_1(\zxz{a}{m}{0}{1}\zxz{1}{0}{1}{1})=\Phi(\omega\zxz{a/m}{0}{0}{m})=w_{\pi_1}^{-c_3}\Phi(\omega\sigma_{c_3}).
\end{equation}

If $v(a+m)=v(m)$, we have
$$\zxz{a}{m}{0}{1}\zxz{1}{0}{1}{1}=-\zxz{1}{0}{\frac{1}{a+m}}{1}\omega \zxz{1}{0}{0}{m}\omega\zxz{\frac{a+m}{m}}{1}{0}{\frac{a}{a+m}},$$
so
\begin{equation}
 \Phi_1(\zxz{a}{m}{0}{1}\zxz{1}{0}{1}{1})=\Phi(\omega \zxz{1}{0}{0}{m}\omega)=w_{\pi_1}^{-c_3}\Phi(\omega\sigma_{c_3}\omega).
\end{equation}
Put together, one can conclude that  $\Phi_1(\zxz{a}{m}{0}{1}\zxz{1}{0}{1}{1})$ is at most level 1 in $a$ when $v(a)=v(m)=-c_3$.

Now we compute the value $B$ for special unramified representations. 
Since
\begin{align}
 \zxz{\varpi^{c_3-1}}{0}{0}{1}\zxz{1}{0}{\varpi^{c_3-1}}{1}\zxz{\varpi^{-c_3+1}}{0}{0}{1}&=\zxz{1}{0}{1}{1}\\
 &=-\zxz{1}{1}{0}{1}\zxz{0}{1}{-1}{0}\zxz{1}{1}{0}{1} ,\notag
\end{align}
we have 
\begin{equation}
 B=\Phi_2(\zxz{\varpi^{c_3-1}}{0}{0}{1}\zxz{1}{0}{\varpi^{c_3-1}}{1}\zxz{\varpi^{-c_3+1}}{0}{0}{1})=\Phi(\omega)=-q^{-1}.
\end{equation}

\subsection{Type 3 occuring}

In this subsection, we consider the representations of form $\pi(\chi_1,\chi_2)$, where $\chi_1$ is unramified and $\chi_2$ is of level $k$. We will basically make use of Lemma \ref{Wiofbiasram}.
Let's first check part (2) of Claim 1. By (\ref{Phi1_2}), when $i<k$,
\begin{equation}
\Phi_1(\zxz{a}{m}{0}{1}\zxz{1}{0}{\varpi^i}{1})=\int\limits_{v(\alpha)\geq 0}\psi(m\alpha) W^{(i)}(a\alpha)\overline{W^{(k)}(\alpha)}d^*\alpha,
\end{equation}
Here
\begin{align}
W^{(k)}(\alpha)=\begin{cases}
q^{-\frac{1}{2}v(\alpha)}\chi_1^k(\varpi)q^{-k}\int\limits_{v(m)=-k}\chi_2(-m)\psi(-m)dm, &\text{\ if\ }v(\alpha)\geq 0,\\
0,& \text{\ otherwise,}
\end{cases}
\end{align}
and 
\begin{equation}\label{Wi_1}
W^{(i)}(a\alpha)=\chi_1^i(\varpi)\int\limits_{u\in O_\F}\chi_2(a\alpha\varpi^{-i}(1-\varpi^{k-i}u))\psi(a\alpha\varpi^{-i}(1-\varpi^{k-i}u))q^{-\frac{1}{2}v(a\alpha)-k+i}du.
\end{equation}
They are not normalized, but it turns out that this is enough. 

For fixed  $v(u)\geq 0$,  $\chi_2(a\alpha\varpi^{-i}(1-\varpi^{k-i}u))$ is of level $i-v(u)$ in $u$, $\psi(a\alpha\varpi^{-i}(1-\varpi^{k-i}u))$ is additive of level $2i-k-v(a\alpha)-v(u)$ in $u$.
For (\ref{Wi_1}) to be nonzero, we need $2i-k-v(a\alpha)-v(u)\leq i-v(u)$, that is $v(a\alpha)\geq i-k$. This is because if $2i-k-v(a\alpha)-v(u)> i-v(u)$, then the integral will be automatically zero
for fixed $v(u)<i$, and
\begin{equation}
 \int\limits_{v(u)\geq i}\chi_2(a\alpha\varpi^{-i}(1-\varpi^{k-i}u))\psi(a\alpha\varpi^{-i}(1-\varpi^{k-i}u))du=\chi_2(a\alpha\varpi^{-i})\int\limits_{v(u)\geq i}\psi(a\alpha\varpi^{-i}(1-\varpi^{k-i}u))du=0,
\end{equation}
as $v(a\alpha\varpi^{k-2i})<-i$.

Then as functions in $a$, $\chi_2(a\alpha\varpi^{-i}(1-\varpi^{k-i}u))$ is of level $k$ in $a$, and $\psi(a\alpha\varpi^{-i}(1-\varpi^{k-i}u))$ is of level $i-v(a\alpha)\leq k$ in $a$. In particular, $W^{(i)}(a\alpha)$ if of level$\leq k$ in $a$.
So (2) of Claim 1 is verified for this case.

Now for $v(m)=-1$, 
\begin{align}
\Phi_1(\zxz{1}{m}{0}{1})&=\int\limits_{v(\alpha)\geq 0}\psi(m\alpha) W^{(k)}(\alpha)\overline{W^{(k)}(\alpha)}d^*\alpha\\
&=q^{-2k}|\chi_1|^{2k}|\int\limits_{v(m)=-k}\chi_2(-m)\psi(-m)dm|^2\int\limits_{v(\alpha)\geq 0}\psi(m\alpha)q^{-v(\alpha)}d^*\alpha.\notag
\end{align}
But up to a nonzero constant, $\int\limits_{v(\alpha)\geq 0}\psi(m\alpha)q^{-v(\alpha)}d^*\alpha$ is just
$$\int\limits_{v(\alpha)\geq 0} \psi(m\alpha)d\alpha,$$
which is zero when $v(m)=-1$. So $A=0$.

Now let $\pi_2$ to be of Type 3. By (\ref{Phi2_1}),
\begin{equation}\label{Phi2ofbiasram}
 \Phi_2(\zxz{1}{0}{\varpi^{k-1}}{1})=
 \int\limits_{v(\alpha)\geq 0}  W^{(k-1)}(\alpha)\overline{W^{(k)}(\alpha)} d^*\alpha.
\end{equation}
Here 
\begin{equation}
W^{(k-1)}(\alpha)=\chi_1^{k-1}(\varpi)\int\limits_{u\in O_\F}\chi_2(\alpha\varpi^{-k+1}(1-\varpi u))\psi(\alpha\varpi^{-k+1}(1-\varpi u))q^{-\frac{1}{2}v(\alpha)-1}du.
\end{equation}
As functions in $\alpha$, $\chi_2(\alpha\varpi^{-k+1}(1-\varpi u))$ is multiplicative of level $k$, but $\psi(\alpha\varpi^{-k+1}(1-\varpi u))$ is of level $k-1-v(\alpha)\leq k-1$. Then the integral in (\ref{Phi2ofbiasram}) has to be zero as
$W^{(k-1)}(\alpha)$ doesn't have any level 0 components. So $B=0$.

\section{Conclusion}

Using Theorem \ref{thmlocalint}, we get the following lower bound for local integrals:
\begin{prop}\label{propoflocallowerbd}
\begin{equation}
\prod_{v\in S} I_v(f_1, \rho(a([\mathcal{N}]))f_2, f_3)\gg N^{-1-\epsilon}
\end{equation}
\end{prop}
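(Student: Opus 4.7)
The plan is to combine Theorem \ref{thmlocalint} with the $L$-factor normalization of (\ref{localtripleF}) and invoke the Ramanujan bound to control the resulting product. For each $v \in S$, substituting the matrix-coefficient integral of Theorem \ref{thmlocalint} into (\ref{localtripleF}) gives
\begin{equation*}
I_v = \frac{L_v(\Pi_v,\mathrm{Ad},1)}{\zeta_v^2(2)\, L_v(\Pi_v,1/2)} \cdot \frac{(1-A_v)(1-B_v)}{(q_v+1)\, q_v^{c_3-1}}.
\end{equation*}
Writing $q_v^{c_3-1} = q_v^{c_3-c_2} \cdot q_v^{c_2-1}$ and using $\mathcal{N} = \prod_{v\in S}\varpi_v^{c_3-c_2}$, the dominant denominator telescopes as $\prod_{v\in S} q_v^{c_3-c_2} = N$, leaving the secondary factors $\prod_{v\in S}(q_v+1)\, q_v^{c_2-1}$, the terms $(1-A_v)(1-B_v)$, and the $L$-factor normalizations to be controlled.

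The main obstacle is the uniform lower bound $(1-A_v)(1-B_v) \geq c > 0$ as $v$ ranges over $S$. Inspecting the tables of Theorem \ref{thmlocalint}: in the Type 1, Type 3, and special-unramified entries, $|1-A|$ is either $\geq 1$ or $\geq 1 + q_v^{-1}$ by direct calculation, and similarly for $B$. The delicate case is the unramified principal series $\pi_{i,v} = \pi(\chi_1,\chi_2)$, where
\begin{equation*}
A = \tfrac{1}{q_v+1}\bigl(\chi_1/\chi_2 + \chi_2/\chi_1 + 1 - q_v^{-1}\bigr)
\end{equation*}
could a priori approach $1$. For tempered $\pi_{i,v}$ this is immediate since $|\chi_1/\chi_2|=1$; in the complementary-series range one has $|\chi_1/\chi_2(\varpi_v)| = q_v^{\pm 2\tau}$ with $|\tau| \leq \alpha$, and the Ramanujan bound $\alpha = 7/64 < 1/4$ combined with the monotonicity of $x \mapsto q_v^x + q_v^{-x}$ for $x>0$ gives $q_v^{2\tau} + q_v^{-2\tau} < q_v^{1/2} + q_v^{-1/2}$, from which a short calculation yields $A \leq 1-\delta$ for some absolute $\delta > 0$ (independent of $q_v$), and similarly for $B$.

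To assemble the estimate, I would note that $|S|$ is bounded by the number of prime divisors of $\mathcal{N}$, hence $|S| = O(\log N / \log\log N)$, and therefore $c^{|S|} \geq N^{-\epsilon}$ for every $\epsilon > 0$. For $v \in S$ with $c_2 = 0$ (all but finitely many, since $\pi_2$ is fixed), $(q_v+1) q_v^{c_2-1} = 1 + q_v^{-1}$, and $\prod_{v\in S}(1 + q_v^{-1}) \leq 2^{|S|} \leq N^\epsilon$; the remaining places contribute a constant depending only on $\pi_1, \pi_2$. Finally, the $L$-factor normalization is bounded above and below by positive constants at each $v$---indeed $L_v(\Pi_v, 1/2) = 1$ whenever $\pi_{3,v}$ is supercuspidal or induced from two ramified characters, which covers all of $S$---and its product over $v \in S$ contributes at worst $N^{\pm\epsilon}$ by the same $|S|$-counting. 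Multiplying these estimates together yields $\prod_{v\in S} I_v \gg N^{-1-\epsilon}$, as required.
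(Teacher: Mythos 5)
Your proposal follows essentially the same route as the paper: substitute the formula of Theorem \ref{thmlocalint} into the normalization (\ref{localtripleF}), observe that $\prod_{v\in S} q_v^{c_3-1}$ gives the $N^{-1}$ main term, show $(1-A_v)(1-B_v)$ is bounded away from zero (direct for Types 1, 3, and special unramified; Ramanujan bound for the unramified principal series), and absorb the remaining bounded per-place constants into $N^{\pm\epsilon}$. Your bookkeeping via $|S| = O(\log N / \log\log N)$ is a slightly cleaner way to absorb the constants than the paper's per-place argument ("$C > q^{-c_3\epsilon}$ once $q$ or $c_3$ is large"), but the two are interchangeable.

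There is one gap: you assert that $L_v(\Pi_v,1/2)=1$ "covers all of $S$," but it does not. The set $S$ is defined by $c_3 \geq 2\max\{c_1,c_2\}$, while Theorem \ref{thmlocalint} requires $c_3 \geq 2\max\{c_1,c_2,1\}$; these differ precisely when $c_1=c_2=0$ and $c_3=1$, in which case $\pi_{3,v}$ is special unramified (forced by the central character condition), $\pi_{1,v},\pi_{2,v}$ are unramified, and neither Theorem \ref{thmlocalint} nor the claim that the local $L$-factor is trivial applies. The paper disposes of this boundary case by citing the computation in \cite{MW12}; your argument needs to do the same (or compute $I_v$ directly from Lemma \ref{lemofMCforSpecial}). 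Once that case is separated off, your argument goes through.
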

\begin{proof}The case when $\pi_{1,v}$ and $\pi_{2,v}$ are unramified and $\pi_{3,v}$ is special unramified representation is considered in \cite{MW12}. For the rest places covered in Theorem \ref{thmlocalint}, the local L-factors are trivial.
 Then essentially the factors $\frac{1}{(q+1)q^{c_3-1}}$ will contribute to the part $N^{-1}$. 
 
 Any fixed constant bound can be absorbed into $N^{-\epsilon}$. This is to say if we have, for example, local inequalities
 $$I_v\geq C\frac{1}{q^{c_3}},$$
 then we are safe to take a product and claim that
 $$\prod_{v\in S} I_v\gg N^{-1-\epsilon}.$$
 This is because $C$ will be finally strictly greater than $\frac{1}{q^{c_3\epsilon}}$, when either $q\rightarrow +\infty$ or $c_3\rightarrow +\infty$.
 
 So in particular we don't have to worry about factors like $\frac{1}{1+q^{-1}}$ and $\frac{1}{\zeta_v(2)}=1-q^{-2}$. What remains to be checked is that $A$ and $B$ should be bounded away from 1. This is clear from Theorem \ref{thmlocalint} for Types 1 and 3 and also special unramified representations.
 For unramified representations, we have
 $$|1-A|=\frac{|q+q^{-1}-\frac{\chi_1}{\chi_2}-\frac{\chi_2}{\chi_1}|}{q+1}\geq \frac{q+q^{-1}-(|\frac{\chi_1}{\chi_2}|+|\frac{\chi_2}{\chi_1}|)}{q+1}.$$
 This is clearly bounded below if the representation is tempered. When it's not tempered, we need to use the bound towards Ramanujan $\alpha$. So
 $$|1-A|\geq \frac{q+q^{-1}-(q^{2\alpha}+q^{-2\alpha})}{q+1},$$
which is clearly bounded below.
\end{proof}
\begin{cor}\label{coroflocalepsiloninS}
 For $v\in S$, we always have
 $$\epsilon_v(\Pi_v,1/2)=1.$$
\end{cor}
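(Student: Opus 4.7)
My plan is to derive the corollary from Prasad's dichotomy (stated in Section 2.2) combined with the explicit computation in Theorem \ref{thmlocalint}. Prasad's theorem says $\epsilon_v(\Pi_v, 1/2) = 1$ if and only if $\Hom_{\GL_2(\F_v)}(\Pi_v, \C) \neq 0$, and via the matrix-coefficient integral defining $I_v$ in (\ref{localtripleF}), this is equivalent to producing one choice of test vectors for which the split-side local integral is nonzero. So it suffices to exhibit such a nonzero choice for each $v \in S$; the construction in (\ref{localtripleF}) then immediately yields the required nonzero $\GL_2(\F_v)$-invariant trilinear form on $\Pi_v$.

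For $v \in S$ with $c_3 \geq 2\max\{c_1, c_2, 1\}$, I would take $f_{i,v}$ to be the local newforms (with $f_{2,v}$ twisted as in Section 4). Theorem \ref{thmlocalint} then gives
$$\int_{\F_v^*\backslash \GL_2(\F_v)} \Phi_1(g)\Phi_2(g)\Phi_3(g)\,dg \;=\; \frac{(1-A)(1-B)}{(q+1)q^{c_3-1}},$$
so the task reduces to checking $A \neq 1$ and $B \neq 1$. From the tables in Theorem \ref{thmlocalint}, whenever $\pi_{1,v}$ is Type 1, Type 3, or special unramified, $A \in \{-1/(q-1),\,0,\,-q^{-1}\}$, all manifestly distinct from $1$. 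In the remaining unramified case $A = \frac{1}{q+1}\bigl(\chi_1/\chi_2 + \chi_2/\chi_1 + 1 - q^{-1}\bigr)$, and
$$|1 - A| \;\geq\; \frac{q + q^{-1} - (q^{2\alpha} + q^{-2\alpha})}{q+1} \;>\; 0,$$
since the bound $\alpha < 1/2$ toward Ramanujan forces $|\chi_i(\varpi)| \leq q^\alpha$, hence $|\chi_1/\chi_2|, |\chi_2/\chi_1| \leq q^{2\alpha}$, and $s \mapsto q^s + q^{-s}$ is increasing on $[0,\infty)$. The argument for $B$ is symmetric in $\pi_{2,v}$. This mirrors exactly the nonvanishing discussion in the proof of Proposition \ref{propoflocallowerbd}.

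The only places in $S$ not covered by Theorem \ref{thmlocalint} are those forcing $c_1 = c_2 = 0$ and $c_3 \in \{0,1\}$: when $c_3 = 0$ all three components are spherical and $I_v = 1$ trivially, and when $c_3 = 1$ this is the setting computed explicitly in Woodbury \cite{MW12}, where $I_v$ is shown to be nonzero. The only genuinely nontrivial point in the argument is the invocation of the Ramanujan bound in the unramified subcase; every other case is direct bookkeeping from the tables of Theorem \ref{thmlocalint}. The logical step from "nonvanishing matrix-coefficient integral" to "nonzero trilinear form on $\Pi_v$" is built into the construction of $I_v$ itself, so no additional work is required there.
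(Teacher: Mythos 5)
Your argument is correct and is essentially the paper's own proof: combine Prasad's dichotomy with the nonvanishing of the local integral from Theorem \ref{thmlocalint}, where the $A,B\neq 1$ case-check is exactly the bound already carried out in the proof of Proposition \ref{propoflocallowerbd}. The only difference is that you explicitly dispose of the boundary cases $c_3=0$ and $c_3=1$ with $c_1=c_2=0$, which the paper had already delegated to the trivial unramified computation and to Woodbury's work \cite{MW12}.
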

\begin{proof}
 The claim just follows from  Prasad's work and that the local integrals in Theorem \ref{thmlocalint} are nonzero.
\end{proof}

\begin{theo}\label{thmmain}
 Let $\pi_i, i=1,2,3$ be three unitary cuspidal automorphic representations of $\GL_2$, such that 
 \begin{equation}
  \prod_iw_{\pi_i}=1.
 \end{equation}

 Fix $\pi_1$ and $\pi_2$, and let $\pi_3$ vary with changing finite conductor $\mathcal{N}$ and $N=\text{Nm}(\mathcal{N})$. Suppose that the infinity component of $\pi_3$ is still bounded. Then
 \begin{equation}
  L(\pi_1\otimes\pi_2\otimes\pi_3,1/2)\ll N^{1-1/12}, \text{\ as\ }N\rightarrow \infty.
 \end{equation}

\end{theo}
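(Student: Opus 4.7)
The plan is to combine Ichino's formula \eqref{Globaltriple} with the two main intermediate results of the paper: the upper bound on the global period integral from Proposition \ref{propofglobalupbd} and the lower bound on the product of local integrals from Proposition \ref{propoflocallowerbd}, while controlling the adjoint $L$-value on the right-hand side using the assumption that $\pi_{3,\infty}$ is bounded.

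The first step is to fix a concrete collection of test vectors $f_i \in \pi_i^{\D}$, where $\D$ is the quaternion algebra singled out by Theorem \ref{thmofJacquetconj}. At the places in $S$ we take newforms, as in (ii) of Section 3; at the finitely many ramified places outside $S$ we choose the local components of $f_1$ and $f_2$ from a fixed finite family of test vectors for which each $I_v$ is bounded away from zero (this is possible by the compactness argument of Remark \ref{remofplacesoutsideS}, using Theorem \ref{thmofJacquetconj}); at unramified places we take spherical vectors. By Corollary \ref{coroflocalepsiloninS}, $\D$ is the matrix algebra at every $v \in S$, so that $a([\mathcal{N}])$ is well-defined and the twisted period $\I(f_1, \rho(a([\mathcal{N}]))f_2, f_3)$ makes sense.

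Next, applying \eqref{Globaltriple} to the triple $(f_1, \rho(a([\mathcal{N}]))f_2, f_3)$ and solving for the central value gives
\begin{equation*}
L(\Pi,1/2) \;=\; \frac{8\, L(\Pi,\operatorname{Ad},1)}{\zeta_\F^2(2)} \cdot \frac{|\I(f_1,\rho(a([\mathcal{N}]))f_2,f_3)|^2}{\prod_v I_v}.
\end{equation*}
Proposition \ref{propofglobalupbd} yields $|\I|^2 \ll N^{-2\delta}$ for any $\delta < -\frac{(\alpha-1/2)(2\alpha-1/2)}{4\alpha-3}$, which with $\alpha=7/64$ gives $2\delta > 1/12$. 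Proposition \ref{propoflocallowerbd} gives $\prod_{v \in S} I_v \gg N^{-1-\epsilon}$, and the remaining $I_v$'s are bounded below by a fixed positive constant by our choice of test vectors, together with the explicit evaluation in \cite{MW12} for the unramified/special-unramified combination. Combining these,
\begin{equation*}
L(\Pi,1/2) \;\ll\; L(\Pi,\operatorname{Ad},1) \cdot N^{1+\epsilon} \cdot N^{-2\delta} \;\ll\; L(\Pi,\operatorname{Ad},1) \cdot N^{1-1/12+\epsilon}.
\end{equation*}

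To close the argument one needs $L(\Pi,\operatorname{Ad},1) \ll N^{\epsilon}$; this is a convexity-type upper bound for the adjoint $L$-function of $\Pi$ whose analytic conductor in the $\pi_3$-aspect grows polynomially in $N$ precisely because $\pi_{3,\infty}$ is assumed bounded (this is the only place condition (4) of the introduction is used, and it is standard). Finally, one observes that $\mathcal{N}$ as defined in Definition \ref{defofconductor} differs from the actual finite conductor of $\pi_3$ by a factor depending only on the fixed conductors of $\pi_1$ and $\pi_2$ (Remark \ref{remofdifferentN}), so replacing $N$ by $\operatorname{Nm}(\mathrm{cond}(\pi_3))$ costs only an $N^{\epsilon}$, and \eqref{formula2} follows.

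The bulk of the work has already been completed in Sections 3 and 4, so the only real obstacles at this stage are accounting bookkeeping: verifying that the finitely many ramified places outside $S$ contribute $O(1)$ to both $|\I|^2$ (through the implicit constant in Proposition \ref{propofboundglobalMC}) and to $\prod_v I_v$, and confirming the convexity bound for $L(\Pi,\operatorname{Ad},1)$ under the bounded-at-infinity hypothesis. None of these require new techniques, and together with the two main propositions they give the claimed subconvex exponent $1-1/12$.
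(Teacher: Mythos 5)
Your proposal is correct and follows the same route as the paper: invert Ichino's formula, insert the upper bound for the twisted period from Proposition \ref{propofglobalupbd}, the lower bound for $\prod_{v\in S} I_v$ from Proposition \ref{propoflocallowerbd} (together with the fixed-constant bound at places outside $S$), and control $L(\Pi,\operatorname{Ad},1)\ll N^\epsilon$ via the bounded-at-infinity hypothesis, then convert $N$ to the actual conductor norm via Remark \ref{remofdifferentN}. The only cosmetic difference is that you describe the adjoint $L$-value bound as a generic convexity estimate, whereas the paper attributes it explicitly to Iwaniec's result \cite{Iw90}; the substance is the same.
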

\begin{proof}
 According to \cite{Iw90},
\begin{equation}
 L(\Pi, Ad, 1)\ll C(\Pi)^\epsilon,
\end{equation}
where the implicit constant depends continuously on the Langlands parameter of the infinity component. In particular when $\pi_1$ $\pi_2$ are fixed and $\pi_3$ has bounded infinity component,
\begin{equation}
 L(\Pi, Ad, 1)\ll N^\epsilon.
\end{equation}

Then to prove the theorem, one just need to apply Proposition \ref{propofglobalupbd} and Proposition \ref{propoflocallowerbd}, and also Remark \ref{remofdifferentN} and Remark \ref{remofplacesoutsideS}.
\end{proof}

\appendix
\section{Bound of global matrix coefficient}
Here we will prove Proposition \ref{propofboundglobalMC}. We record it here again. 
Let $\D$ be a global quaternion algebra. Let $\rho$ denote the right regular representation of $\D^*(\A)$ on $L^2(Z_{\A}\D^*(\F)\backslash \D^*(\A))$. 
Let $F_1$, $F_2\in L^2(Z_{\A}\D^*(\F)\backslash \D^*(\A))$ be two rapidly decreasing and $K-$finite automorphic forms which don't have 1-dim components in their spectrum decomposition. Implicitly the center acts on $F_i$ trivially.
 Let $S$ be a finite set of non-archimedean places. We assume that $\D$ is locally the matrix algebra at the places in $S$.  
Let $K_S=\prod_{v\in S}K_v$ and 
$$K_{i,S}=\prod_{v\in S}K_{i,v}\text{\ \ }K_i=\prod_{v\text{\ finite}}K_{i,v},$$ 
where $K_{i,v}$ stabilizes the local component of $F_i$ at $v$. Let $\mathcal{N}=\prod_v\varpi_v^{e_v}$ for $e_v\geq 0$, and $N=\text{Nm}(\mathcal{N})$. Define the matrix 
 $$a([\mathcal{N}])=\prod_v\zxz{\varpi^{-e_v}}{0}{0}{1},$$
 which can be naturally thought of as an element of $\D^*(\A)$.
\begin{prop}
With the setting as above, we have
 \begin{equation}
  |\int\limits_{Z_{\A}\D^*(\F)\backslash \D^*(\A)}F_1(g)\rho(a([\mathcal{N}]))F_2(g)dg|\ll_{\epsilon,\F}[K_S:K_{1,S}]^{1/2}[K_S:K_{2,S}]^{1/2}N^{\alpha-1/2+\epsilon}||F_1||_{L^2}||F_2||_{L^2}.
 \end{equation}

\end{prop}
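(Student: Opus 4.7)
The plan is to reduce the global matrix coefficient to a product of local matrix coefficients via spectral decomposition, then apply the local bound in Lemma \ref{localboundmatrixcoeff} at each place, and finally recombine using Cauchy-Schwarz and Parseval. Since $F_1, F_2$ are $K$-finite, rapidly decreasing, and have no one-dimensional components, I would first spectrally decompose $F_i = \sum_\pi F_{i,\pi}$, where $\pi$ ranges over irreducible subrepresentations (cuspidal or Eisenstein) of $L^2(Z_\A \D^*(\F) \backslash \D^*(\A))$, all of which are infinite-dimensional. By further refining with respect to $K$-types and factorizable vectors (only finitely many $K$-types occur in $F_i$ by $K$-finiteness), I may assume each $F_{i,\pi}$ corresponds to a pure tensor $\otimes_v f_{i,v}$ in $\pi \cong \otimes_v \pi_v$.

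For each such pure tensor in a single $\pi$, the global matrix coefficient factors as a product over local places:
\begin{equation*}
\int_{Z_\A \D^*(\F) \backslash \D^*(\A)} F_{1,\pi}(g)\, \rho(a([\mathcal{N}])) F_{2,\pi}(g)\, dg = c_\pi \prod_v \langle \pi_v(a_v([\mathcal{N}])) f_{2,v}, f_{1,v}^\vee \rangle_v,
\end{equation*}
where $c_\pi$ collects normalization constants from the isometric embedding $\pi \hookrightarrow L^2(X)$, and $f_{1,v}^\vee$ is the vector in $\check{\pi}_v$ corresponding to $f_{1,v}$ under the unitary pairing. At almost all places $v \notin S$ with $v \nmid \mathcal{N}$, $a_v$ is the identity and $f_{i,v}$ is spherical, so the local factor reduces to $||f_{i,v}||_v$ up to a normalization.

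At the remaining finite places, I apply Lemma \ref{localboundmatrixcoeff} to obtain, for each $v$,
\begin{equation*}
|\langle \pi_v(a_v([\mathcal{N}])) f_{2,v}, f_{1,v}^\vee \rangle_v| \ll_{\epsilon,\F} [K_v:K_{1,v}]^{1/2} [K_v:K_{2,v}]^{1/2} q_v^{(\alpha - 1/2 + \epsilon) e_v} ||f_{1,v}||_v ||f_{2,v}||_v.
\end{equation*}
Taking the product over $v \mid \mathcal{N}$ produces the global factor $N^{\alpha - 1/2 + \epsilon}$, while the product over $v \in S$ produces the index factors $[K_S : K_{1,S}]^{1/2} [K_S : K_{2,S}]^{1/2}$. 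Remark \ref{remoftakingproductofbound} is what permits these local inequalities to be combined without an uncontrolled implicit constant. Finally, I sum over the spectral decomposition: by Cauchy-Schwarz across the $\pi$ components and Parseval $\sum_\pi ||F_{i,\pi}||^2 \leq ||F_i||^2_{L^2}$, the per-representation bounds (which depend only on the fixed local data $K_{i,v}$ and not on $\pi$) aggregate to the claimed global inequality.

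The main obstacle is the uniform spectral control. One must ensure that the local bound in Lemma \ref{localboundmatrixcoeff} holds uniformly across all $\pi$ appearing in the decomposition, including at archimedean places and along the continuous Eisenstein spectrum, and that the implicit constants are consistent in the sense of Remark \ref{remoftakingproductofbound} so that the infinite product over places converges. The archimedean places are handled by the $K$-finiteness assumption (finitely many $K_\infty$-types) together with bounds on Harish-Chandra functions from tempered/almost-tempered representation theory, while the Eisenstein contribution reduces, via the description of Eisenstein series as analytic continuations of induced representations, to the same tempered local matrix coefficient bounds applied fiberwise in the spectral parameter.
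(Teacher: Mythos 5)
Your proposal follows the same high-level route as the paper: spectral decomposition, apply the local matrix coefficient bound (Lemma \ref{localboundmatrixcoeff}) place by place, then reassemble by Cauchy--Schwarz and Plancherel. For the cuspidal part this is essentially the paper's argument, though the paper avoids your reduction to pure tensors (which requires extra bookkeeping to control how many pure tensors appear and with what coefficients) by bounding the matrix coefficient of the full $K_i$-invariant component directly, using that Lemma \ref{localboundmatrixcoeff} applies to arbitrary $K_v$-finite vectors.

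The genuine gap is in the continuous spectrum. You write ``spectrally decompose $F_i = \sum_\pi F_{i,\pi}$ where $\pi$ ranges over irreducible subrepresentations (cuspidal or Eisenstein)'' and finish with ``Cauchy--Schwarz across the $\pi$ components and Parseval $\sum_\pi \|F_{i,\pi}\|^2 \le \|F_i\|^2$.'' But the Eisenstein spectrum is a direct integral over the cuspidal data $\chi\in\mathcal{X}_T$ with a measure $d\chi$, not a discrete sum of subrepresentations; one needs the Plancherel formula (\ref{formulaofplancherel}) and must apply Cauchy--Schwarz in the integral over $\chi$. More importantly, to invoke Lemma \ref{localboundmatrixcoeff} with the stated $[K_S:K_{i,S}]^{1/2}$ factors on the continuous part, you need that for each $\chi$ the projection $\sum_{\varphi\in\mathcal{B}(\chi)} \langle F_i, E_{\chi,\varphi}\rangle\,\varphi \in \mathcal{I}(\chi)$ is still $K_i$-invariant. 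This is not formal for the Eisenstein spectrum (the Eisenstein series are not in $L^2$, so it is not merely a statement about a projection in a Hilbert space); the paper proves it as a standalone lemma by unfolding $\langle F_i, E_{\chi,\varphi}\rangle$ and using local double-coset decompositions $\GL_2(\F_v)=\coprod B a_j K_{i,v}$ to show the pairing vanishes unless $\varphi\in\mathcal{I}(\chi)^{K_i}$. Your proposal asserts without justification that ``the Eisenstein contribution reduces \ldots\ to the same tempered local matrix coefficient bounds applied fiberwise,'' which is precisely what needs to be established; filling in this lemma is the essential missing step.
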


First of all, the case when $\D$ is a division algebra is actually simple to prove. This is because there is no continuous spectrum for $L^2(\Z_\A\D^*(\F)\backslash \D^*(\A))$. 
 When the automorphic forms are from a single cuspidal representation, one just need to take a product of local bounds in Lemma \ref{localboundmatrixcoeff} following Remark \ref{remoftakingproductofbound}. 
In general we consider the spectrum decomposition for $F_i$: 
$$F_i=\sum\limits_{\pi}\sum\limits_{f\in \mathcal{B}(\pi)}<F_i,f>f,$$
where $\mathcal{B}(\pi)$ is an orthonormal basis under the unitary pairing for a cuspidal automorphic representation $\pi$.
If $F_i$ is invariant under $K_{i}$, then its cuspidal component in $\pi$:
$$\sum\limits_{f\in \mathcal{B}(\pi)}<F_i,f>f$$ 
is also invariant under $K_{i}$. This is true because of Plancherel Theorem.
As a result, one can apply the argument for the previous case for each such component, and then use Cauchy-Schwarz inequality.

When $\D$ is the matrix algebra, one can argue similarly if $F_1$, $F_2$ have only cuspidal spectrums. But in general, they can have continuous spectrums. Intuitively the continuous spectrums shouldn't mess things up because they are related to Eisenstein series defined by unitary characters,  and they look like tempered representations locally. 

To argue more strictly, let's first recall some results. 

\subsection{The spectrums of $L^2(\Z_\A\GL_2(\F)\backslash\GL_2(\A))$ and the Plancherel formula}

For a more detailed reference of this subsection, see Section 2.2 in \cite{MV10}.

Let $\mathcal{X}$ denote the set of pairs $(M,\sigma)$, where $M$ is a $\F-$Levi subgroup of a $\F-$parabolic subgroup (containing a maximally $\F-$split torus $T$), and $\sigma$ is an irreducible automorphic representation of $M$ naturally embedded in $L^2(\Z_M M(\F)\backslash M(\A))$. 
For $\GL_2$ there are two cases. When $M$ is the whole group $\GL_2$, then $\sigma$ is just a cuspidal representation. When $M$ is the torus $T$, then $\sigma$ is actually a unitary character on the torus.

We can equip $\mathcal{X}$ with a measure in the following way: we write 
\begin{equation}
 \mathcal{X}=\bigsqcup\limits_{M}\mathcal{X}_M,
\end{equation}
indexed by levis containing $T$. We require that for any continuous assignment of $\chi\in \mathcal{X}_M$ to $f_\chi$ in the underlying space of $\chi$,
\begin{equation}
 \int\limits_{\Z_M(\F)\backslash M(\A)}|\int\limits_{\chi}f_\chi d\chi|^2=\int\limits_{\chi}||f_\chi||^2_\sigma d\chi.
\end{equation}
This uniquely specifies a measure $d\chi$ on $\mathcal{X}_M$, and so also on $\mathcal{X}$. Note when $\sigma $ is a unitary character on the torus, $||\cdot||_\sigma$ is just the usual absolute value.

$(M,\sigma)$ is said to be equivalent to $(M', \sigma')$ if there exists $\omega$ in the normalizer of $T$ with $Ad(\omega)M=M'$ and $Ad(\omega)\sigma=\sigma'$. There is a natural quotient measure on $\mathcal{X}/\sim$.

For $\chi=(M,\sigma)\in \mathcal{X}$, we donte by $\mathcal{I}(\chi)$ the unitary induced representation $\text{Ind}_{P(\A)}^{G(\A)}\sigma,$ where $P$ is any parabolic subgroup containing $M$. One can define a unitary pairing on $\mathcal{I}(\chi)$ by
\begin{equation}
 <f_1,f_2>_{\text{Eis}}=\int\limits_{K}<f_1(k),f_2(k)>_\sigma dk,
\end{equation}
where $K$ is equipped with Haar probability measure. When $M=T$ and $\sigma$ is just a unitary character of $T$, this pairing is just
\begin{equation}\label{formulaofglobalEispairing}
 <f_1,f_2>_{\text{Eis}}=\int\limits_{K}f_1(k)\overline{f_2(k)}dk,
\end{equation}
which is directly a product of local integrals.
With this pairing, one can talk about orthonormal basis for $\mathcal{I}(\chi)$. We will denote such an orthonormal basis by $\mathcal{B}(\chi)$.

For any element $\varphi\in \mathcal{I}(\chi)$, one can define the corresponding Eisenstein series by just averaging over $P(\F)\backslash G(\F)$ and analytic continuation. We will denote the corresponding Eisenstein series by $E_{\chi,\varphi}$.

For rapidly decreasing functions we have the following Plancherel formulae
\begin{equation}\label{formofspectrumdecomp}
F=\int\limits_{\chi\in \mathcal{X}/\sim}\sum\limits_{\varphi\in \mathcal{B}(\chi)}<F,E_{\chi,\varphi}>E_{\chi,\varphi}d\chi, 
\end{equation}

\begin{equation}\label{formulaofplancherel}
 <F_1, F_2>=\int\limits_{\chi\in \mathcal{X}/\sim}\sum\limits_{\varphi\in \mathcal{B}(\chi)}<F_1,E_{\chi,\varphi}>\overline{<F_2,E_{\chi,\varphi}>}d\chi.
\end{equation}

\subsection{Proof continued}
Now we shall finish the proof of Proposition \ref{propofboundglobalMC}. As we've already proved the proposition for the cuspidal part, we can assume from now on that $F_1$ and $F_2$ have only continuous spectrums and then use Cauchy-Schwartz inequality to piece together.

By (\ref{formulaofplancherel}), 
\begin{align}\label{formulaofglobalmatrixcoeff1}
 <F_1, \rho(a([\mathcal{N}]))F_2>&=\int\limits_{\chi\in \mathcal{X}_T}\sum\limits_{\varphi\in \mathcal{B}(\chi)}<F_1,E_{\chi,\varphi}>\overline{<\rho(a([\mathcal{N}]))F_2,E_{\chi,\varphi}>}d\chi\\
&= \int\limits_{\chi\in \mathcal{X}_T}\sum\limits_{\varphi\in \mathcal{B}(\chi)}<F_1,E_{\chi,\varphi}>\overline{<F_2,\rho(a([\mathcal{N}])^{-1})E_{\chi,\varphi}>}d\chi.\notag
\end{align}
Note that 
$$\rho(a([\mathcal{N}])^{-1})E_{\chi,\varphi}=E_{\chi,\rho(a([\mathcal{N}])^{-1})\varphi},$$
where $\rho(a([\mathcal{N}])^{-1})\varphi$ is still in $\mathcal{I}(\chi)$ and is $K-$finite. In particular, we can decompose $\rho(a([\mathcal{N}])^{-1})\varphi$ using the orthonormal basis $\mathcal{B}(\chi)$: 
\begin{equation}
 \rho(a([\mathcal{N}])^{-1})\varphi=\sum\limits_{\varphi'\in \mathcal{B}(\chi)}<\rho(a([\mathcal{N}])^{-1})\varphi,\varphi'>_{\text{Eis}}\varphi'.
\end{equation}
Note this is a purely local argument, and the sum on the right hand side is just a finite sum if we pick the basis properly.
Correspondingly, 
\begin{equation}
 \rho(a([\mathcal{N}])^{-1})E_{\chi,\varphi}=\sum\limits_{\varphi'\in \mathcal{B}(\chi)}<\rho(a([\mathcal{N}])^{-1})\varphi,\varphi'>_{\text{Eis}}E_{\chi, \varphi'}.
\end{equation}
Now the part associated to $\chi$ in (\ref{formulaofglobalmatrixcoeff1}) becomes
\begin{align}
&\text{\ \ \ }\sum\limits_{\varphi\in \mathcal{B}(\chi)}<F_1,E_{\chi,\varphi}>\overline{<F_2,\rho(a([\mathcal{N}])^{-1})E_{\chi,\varphi}>}\\
&=\sum\limits_{\varphi,\varphi'}<F_1,E_{\chi,\varphi}><\rho(a([\mathcal{N}])^{-1})\varphi,\varphi'>_{\text{Eis}}\overline{<F_2,E_{\chi, \varphi'>}}\notag\\
&=<\sum\limits_{\varphi}<F_1,E_{\chi,\varphi}>\rho(a([\mathcal{N}])^{-1})\varphi,\sum\limits_{\varphi'}<F_2,E_{\chi, \varphi'}>\varphi'>_{\text{Eis}}\notag\\
&=<\sum\limits_{\varphi}<F_1,E_{\chi,\varphi}>\varphi,\rho(a([\mathcal{N}]))\sum\limits_{\varphi'}<F_2,E_{\chi, \varphi'}>\varphi'>_{\text{Eis}}.\notag
\end{align}

For each $ \sum\limits_{\varphi\in \mathcal{B}(\chi)}<F_i, E_{\chi,\varphi}>\varphi$ in the expression above we have the following lemma:
\begin{lem}
 If $F_i$ is $K_i-$invariant, then for any cuspidal datum $\chi$ associated to $T$,
 \begin{equation}\label{formulaofEiscomponent}
  \sum\limits_{\varphi\in \mathcal{B}(\chi)}<F_i, E_{\chi,\varphi}>\varphi
 \end{equation}
is also $K_{i}-$ invariant.
\end{lem}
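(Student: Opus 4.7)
The plan is to prove the stronger fact that the ``projection onto the $\chi$-isotypic component''
$$P_\chi : F \mapsto \sum_{\varphi \in \mathcal{B}(\chi)} <F, E_{\chi,\varphi}> \varphi \in \mathcal{I}(\chi)$$
is $G(\A)$-equivariant with respect to right translation, and then apply this to $g = k \in K_i$. Once equivariance is established, the $K_i$-invariance of $F_i$ translates directly into the $K_i$-invariance of $P_\chi(F_i)$.

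The first step is to record the two ingredients needed. The first is the obvious $G(\A)$-equivariance of the Eisenstein map itself: $\rho(g) E_{\chi,\varphi} = E_{\chi, \rho(g)\varphi}$ for all $g \in G(\A)$ and $\varphi \in \mathcal{I}(\chi)$, which follows immediately from the definition of $E_{\chi,\varphi}$ as an automorphic average of $\varphi$ (extended by analytic continuation). The second is that $\rho$ acts by unitary operators on $L^2(Z_\A \GL_2(\F)\backslash \GL_2(\A))$ with respect to the $L^2$-pairing.

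Next, for $k \in K_i$ and an arbitrary test vector $\psi \in \mathcal{I}(\chi)$, I would compute $<\rho(k) P_\chi(F_i), \psi>_{\text{Eis}}$ by expanding $\psi$ in the basis $\mathcal{B}(\chi)$. Using Parseval in $\mathcal{I}(\chi)$, one has the basic identity $<P_\chi(F), \eta>_{\text{Eis}} = <F, E_{\chi,\eta}>_{L^2}$ for any $\eta \in \mathcal{I}(\chi)$. Combining this with the two ingredients above and the hypothesis $\rho(k) F_i = F_i$, the chain
\begin{align*}
<\rho(k) P_\chi(F_i), \psi>_{\text{Eis}}
&= <P_\chi(F_i), \rho(k^{-1})\psi>_{\text{Eis}}
= <F_i, E_{\chi, \rho(k^{-1})\psi}>_{L^2} \\
&= <F_i, \rho(k^{-1}) E_{\chi,\psi}>_{L^2}
= <\rho(k) F_i, E_{\chi,\psi}>_{L^2} \\
&= <F_i, E_{\chi,\psi}>_{L^2}
= <P_\chi(F_i), \psi>_{\text{Eis}}
\end{align*}
shows that $\rho(k) P_\chi(F_i)$ and $P_\chi(F_i)$ have identical Eisenstein pairings against every $\psi$, hence coincide.

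There is no genuine obstacle here; the argument is purely formal. The only point worth checking is that the sum defining $P_\chi(F_i)$ actually makes sense, but since $F_i$ is $K$-finite and rapidly decreasing, after restricting to a fixed $K$-type the sum is finite (for fixed $\chi$), so no convergence subtlety arises. The same argument would equally well show equivariance under any element of $G(\A)$, but only $K_i$-invariance is needed downstream.
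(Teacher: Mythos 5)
Your argument is correct, and it is a genuinely different route from the one in the paper. The paper's proof is more hands-on: it observes the sum is basis-independent, picks a basis that first spans $\mathcal{I}(\chi)^{K_i}$ and then its orthogonal complement, unfolds the pairing $\langle F_i, E_{\chi,\varphi}\rangle$ along $B(\F)\backslash \GL_2(\A)$, and after integrating $\varphi$ over $K_i$ recognizes the resulting local integrals as Eisenstein pairings against explicit $K_{i,v}$-invariant vectors supported on single Bruhat--Iwasawa cells; orthogonality then kills every term coming from the complement of $\mathcal{I}(\chi)^{K_i}$. Your proof instead abstracts the whole thing into a single intertwining statement: the projection $P_\chi$ commutes with right translation, because (a) Eisenstein series commute with right translation, (b) right translation is unitary on $L^2$, and (c) right translation is unitary on $\mathcal{I}(\chi)$ for the $\langle\cdot,\cdot\rangle_{\mathrm{Eis}}$ pairing (this is the unitarity of normalized parabolic induction, and is the one hypothesis you use that is not quite trivial, so it would be worth flagging). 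Given these three facts, the chain of equalities is immediate and yields the stronger conclusion of full $G(\A)$-equivariance. What your approach buys is brevity and generality --- no basis choice, no double coset decomposition, no unfolding; what the paper's approach buys is that it never appeals to unitarity of $\rho$ on $\mathcal{I}(\chi)$ explicitly, working instead at the level of supports of local vectors, which fits the concrete local computations elsewhere in the paper. One small point to keep in mind: the pairing $\langle F_i, E_{\chi,\psi}\rangle$ is not an honest $L^2$ inner product since $E_{\chi,\psi}\notin L^2$, so the ``unitarity'' step $\langle F_i, \rho(k^{-1})E_{\chi,\psi}\rangle = \langle \rho(k)F_i, E_{\chi,\psi}\rangle$ should be read simply as invariance of the Haar measure under the substitution $g\mapsto gk$, valid because $F_i$ is rapidly decreasing; that is how you phrase ingredient (b) and it is fine.
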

\begin{proof}
First (\ref{formulaofEiscomponent}) is independent of the choice of the basis. In particular one can pick an orthonormal basis for $\mathcal{I}(\chi)^{K_{i}}$ first, then extend it to an orthonormal basis for $\mathcal{I}(\chi)$. To prove the lemma, it is enough to show for this basis that if $\varphi\in \mathcal{B}(\chi)-\mathcal{B}(\mathcal{I}(\chi)^{K_{i}})$,
 then $<F_i,E_{\chi, \varphi}>=0$. 
 
 By definition and the standard unfolding technique,
\begin{align}
<F_i,E_{\chi,\varphi}>&=\int\limits_{g\in Z_{\A}\GL_2(\F)\backslash \GL_2(\A)}F_i(g)\overline{E_{\chi,\varphi}(g)}dg\\
&=\int\limits_{g\in Z_{\A}B(\F)\backslash \GL_2(\A)}F_i(g)\overline{\varphi(g)}dg\notag\\
&=\int\limits_{g\in Z_{\A}B(\F)\backslash \GL_2(\A)/K_i}F_i(g)\int\limits_{k\in 
K_i}\overline{\varphi(gk)}dkdg.\notag
\end{align}
We'd like to see for every $g$, whether the following integral is zero or not:
\begin{equation}\label{formulaofEisparing}
 \int\limits_{K_i}\overline{\varphi(gk)}dk=\prod\limits_{v}\int\limits_{K_{i,v}}\overline{\varphi_v(g_vk_v)}dk_v.
\end{equation}
Now fix $g$. For every $v$, consider the double coset decomposition
$$\GL_2(\F_v)=\coprod\limits_{a_i}Ba_iK_{i,v},$$
where $\{a_i\}$ is a set of double coset representatives. Locally we can write $g_v=b_va_{g(v)}k_v'$, where $a_{g(v)}\in\{a_i\}$. Note for almost all places, $K_{i,v}=K_v$ and $g_v\in K_v$, so $b_v$ and $a_{g(v)}$ will be trivial there.
Then (\ref{formulaofEisparing}) is zero if and only if
\begin{equation}\label{formulaofEisparing2}
 \prod\limits_{v}\int\limits_{K_{i,v}}\overline{\varphi_v(a_{g(v)}k_v)}dk_v
\end{equation}
is zero.

But for every fixed $g$ this integral is the same (up to a nonzero constant) as the pairing $<\cdot,\cdot>_\text{Eis}$ in $\mathcal{I}(\chi)$ between $\varphi$ and another element $\varphi'\in \mathcal{I}(\chi)$ whose local component at $v$ is singly supported on $Ba_{g(v)}K_{i,v}$. This element $\varphi'$ is clearly in $\mathcal{I}(\chi)^{K_i}$, so by the choice of $\varphi$, 
\begin{equation}
 <\varphi',\varphi>_\text{Eis}=0.
\end{equation}
So (\ref{formulaofEisparing}) is zero and $<F_i,E_{\chi,\varphi}>=0$ for $\varphi\in \mathcal{B}(\chi)-\mathcal{B}(\mathcal{I}(\chi)^{K_{i}})$.
\end{proof}
Recall the pairing $<\cdot,\cdot>_\text{Eis}$ is directly a product of local pairings, and for the local pairing, we can use Lemma \ref{localboundmatrixcoeff} to bound the local matrix coefficient. 
Note the local components of $\mathcal{I}(\chi)$ are always tempered.
By taking a product and using the result that $\sum\limits_{\varphi\in \mathcal{B}(\chi)}<F_i, E_{\chi,\varphi}>\varphi$ is $K_{i}-$ invariant, we can get
\begin{align}
&\text{\ \ \ }\sum\limits_{\varphi\in \mathcal{B}(\chi)}<F_1,E_{\chi,\varphi}>\overline{<F_2,\rho(a([\mathcal{N}])^{-1})E_{\chi,\varphi}>}\\
&\ll_{\epsilon,\F} [K_S:K_{1,S}]^{1/2}[K_S:K_{2,S}]^{1/2} N^{-1/2+\epsilon}||\sum\limits_{\varphi}<F_i, E_{\chi,\varphi}>\varphi ||_\text{Eis} ||\sum\limits_{\varphi}<F_2, E_{\chi,\varphi}>\varphi ||_\text{Eis}\notag \\
&\leq [K_S:K_{1,S}]^{1/2}[K_S:K_{2,S}]^{1/2} N^{\alpha-1/2+\epsilon}(\sum\limits_{\varphi}|<F_1, E_{\chi,\varphi}>|^2)^{1/2} (\sum\limits_{\varphi}|<F_2, E_{\chi,\varphi}>|^2)^{1/2},\notag
\end{align}
for any bound towards Ramanujan Conjecture $\alpha$. Finially when we do the integral in cuspidal datum $\chi$, just apply Cauchy-Schwartz inequality.

\end{document}